\title{Linear sections of Grassmannians and resonance of vector bundles}
\author{Marian Aprodu, C\u alin Spiridon}
\address{Marian Aprodu: University of Bucharest, Faculty of Mathematics and Informatics, Academiei Str. 14 Bucharest, Romania \& "Simion Stoilow" Institute of Mathematics of the Romanian Academy, P.O. Box 1-764, Bucharest, Romania}
\email{marian.aprodu@fmi.unibuc.ro \& marian.aprodu@imar.ro}
\address{C\u alin Spiridon: University of Bucharest, Faculty of Mathematics and Informatics, Academiei Str. 14 Bucharest, Romania \& "Simion Stoilow" Institute of Mathematics of the Romanian Academy, P.O. Box 1-764, Bucharest, Romania}
\email{cspiridon@imar.ro}
\date{\today}
\thanks{The authors have been partly funded by the project PNRR-III-C9-2022-I8 "Cohomological Hall algebras of smooth surfaces and applications" - CF 44/14.11.2022.}
\thanks{Marian Aprodu is grateful to Gavril Farkas, Claudiu Raicu and Alex Suciu for numerous enlightening discussions on resonance varieties and Koszul modules, as well as for our previous joint work on the topic. The authors thank the referee for the careful reading and the constructive suggestions.}
\begin{document}

\begin{abstract}
The investigation of resonance varieties of vector bundles was initiated in the papers \cite{aprodu_koszul_2024}, \cite{aprodu_reduced_2024}. 
This work revolves around the question of  whether  a given resonance variety is associated with a vector bundle.
We show the existence of a family of natural morphisms on a stratification of the resonance variety to a suitable family of a Quot scheme and provide some applications in the curve case. The existence of this family of morphisms represents an obstruction to affirmatively answering the main question. 
In addition, we study the resonance of restricted universal rank-two quotient bundles over transversal linear sections of the Grassmann varieties $\gr_2(\mathbb{C}^n)$, with a special attention to low-dimensional Grassmannians. These bundles are among the most natural to consider in this context. The analysis for $\gr_2(\mathbb{C}^6)$ shows that any resonance variety in $\p^5$ consisting of fourteen disjoint lines is the resonance of some Mukai bundle as defined in ~\cite{mukai_curves_1992}.
\end{abstract}

\maketitle

%\tableofcontents

\section{Introduction}

The \emph{resonance locus} $\r(V,K)$ associated to a pair $(V,K)$, where $V$ is a finite-dimensional complex vector space and $K\subseteq \bigwedge^2V$ is a subspace with orthogonal complement $K^\perp \subseteq \bigwedge^2V^\vee$, is a projective variety covered by lines, defined as the image --- via the incidence variety --- of the linear section $\gr_2(V^\vee)\cap \p K^\perp$ of the Grassmannian $\gr_2(V^\vee)$. This notion originates from the study of hyperplane arrangements (see for instance \cite{falk_resonance_1997}), and today it plays a significant role in geometric group theory. Papadima and Suciu \cite{papadima_vanishing_2015} showed that resonance varieties are support loci for some naturally defined finitely-generated graded modules over the polynomial algebra, called \emph{Koszul modules}. As such, they carry a natural scheme structure given by the annihilator, which is sometimes non-reduced \cite{aprodu_reduced_2024}. 

The main source of examples comes from groups, starting from the set of decomposable elements in the kernel $K^\perp$ of a cup-product map, see, for example \cite{papadima_vanishing_2015}, \cite{aprodu_koszul_2022}. If this set is empty, then one can effectively estimate the Chen ranks of the group, accomplishing one of the tasks in geometric group theory, see \cite{aprodu_koszul_2022}. Under more relaxed hypotheses, for example, if the resonance is a disjoint union of (isotropic) projective subspaces and reduced, the Chen ranks are still computable, see for example \cite{aprodu_reduced_2024}. Hence, geometric properties of group resonance loci are reflected in algebraic properties of groups. In the case of K\"ahler groups, the existence of decomposable elements in the kernel of the cup-product map is directly related to the existence of fibrations over curves of genus $\ge 2$, by the topological version of Castelnuovo-de Franchis Theorem, see \cite{catanese_1991}.

Another rich source of examples is provided by vector bundle theory. Given a vector bundle $E$ on a smooth projective variety $X$, we consider the set of pairs of global sections that generate a rank-one subsheaf of $E$. The image of this locus in the projectivization $\p H^0(X,E)$ of the space of global sections is the \emph{resonance locus} $\r(E)$ of the vector bundle $E$. When considered with the reduced structure, this locus is the \emph{resonance variety} of the bundle. The resonance of a vector bundle is a useful tool in studying its geometric properties. For instance, the resonance is trivial if and only if the bundle has no subpencils, and, in rank-two, this property could be the starting point for verifying stability. This condition of having trivial resonance was implicitly used by C. Voisin, for suitable Lazarsfeld-Mukai bundles, in her landmark papers \cite{voisin_greens_2002}, \cite{voisin_greens_2005} on Green's conjecture. Like in the case of groups, if the resonance is trivial, one can estimate the Hilbert series of the associated Koszul module. Other geometric properties, like stability or splitability, can be read off the resonance, see \cite{aprodu_koszul_2024}.

\medskip 

This paper is mainly concerned with the following fundamental question.

\begin{ques} \label{ques:General}
Given a vector space $V$ of dimension $n \ge 4$ and a linear subspace $K \subseteq \bigwedge^2V$, does there exist a smooth projective variety $X$ and a vector bundle $E$ on $X$ such that the resonance of the pair $(V,K)$ coincides with the resonance of the vector bundle $E$?
\end{ques}

Since we omit the more refined scheme structure here, the question is asked in terms of set--theoretic equality. It nonetheless admits a deeper formulation that takes also into account the scheme structure: given a pair $(V,K)$ does there exist a smooth projective variety $X$ and a rank-two vector bundle $E$ on $X$ such that $H^0(X,E) \cong V^\vee$ and the orthogonal $K^\perp\subseteq \bigwedge^2V^\vee$ naturally identifies with the kernel of the determinant map of $E$ via this isomorphism?
It was emphasized in \cite{aprodu_reduced_2024} that resonance varieties of vector bundles enjoy special properties, for example, they are set-theoretically disjoint unions of projectivisation of spaces of global sections of \emph{saturated} sub-line bundles. In this paper we continue the discussion begun in \cite{aprodu_koszul_2024}, \cite{aprodu_reduced_2024} and prove more properties of resonance of vector bundles that provide obstructions to Question \ref{ques:General}, see Theorem \ref{thm:Flattening-Stratification-Square} in Section \ref{sec:Resonance}. 

\medskip

Another basic question, which is a special instance of Question \ref{ques:General}, is the following.

\begin{ques} \label{ques:Generic}
 How \emph{generic} are the resonance loci of vector bundles? That is, can generic linear sections of the Grassmannian $\gr_2(\mathbb{C}^n)$ be associated to vector bundles?
\end{ques}

We will positively answer Question \ref{ques:Generic} for small values of the dimension $n$ in Section \ref{sec:LowDim}. 
In the process, we introduce the notion of \emph{transversality} of a vector bundle and prove several geometric properties of the restriction of the universal quotient bundle over the aforementioned linear section in Theorem~\ref{thm:TransvConseq}. The question remains open whether the results obtained in the low-dimensional case reflect a pathological behavior or point to a more general phenomenon.

\medskip

The outline of the paper is as follows. 
In Section \ref{sec:prelim}, we recall some basic facts related to saturation and introduce two natural degeneracy loci associated to a vector bundle on a smooth projective variety.

In Section \ref{sec:Resonance}, we recall the definition of the resonance loci, focusing on the vector bundle case. We also revisit the relationships with saturated sub-line bundles and expand the discussion initiated in \cite[Proposition 6.1]{aprodu_reduced_2024}. We define a stratification of the resonance locus of a bundle $E$ such that, on each stratum, the map to the locus $\w(E)$ of saturated sub-line bundles is defined by a flat family of quotients of the dual bundle $E^\vee$ and hence it gives a morphism to a Quot scheme, Theorem \ref{thm:Flattening-Stratification-Square}. As a consequence, we note that the resonance of a vector bundle is linear if and only if $\w(E)$ is finite, Proposition \ref{prop:LinearResonance}, see also \cite[Proposition 6.1]{aprodu_reduced_2024}.

In Section \ref{sec:Curves} we consider the case of rank-two vector bundles on curves. As in \cite{aprodu_reduced_2024}, the loci $W^1_d(E)$ of $g^1_d$'s that can be embedded into the given rank-two bundle $E$ play a major role. These loci relate to the resonance via a morphism from $\w(E)$ to the disjoint union of $W^1_d(E)$ which yields to a clearer description of the resonance. For ample rank-two bundles on the projective line, we show that the resonance variety is irreducible, even though, besides the case of $\cO_{\p^1}(1) \oplus \cO_{\p^1}(1)$, the corresponding linear section of the Grassmannian is not.

Section \ref{sec:Transversal} focuses on the case where the resonance of a vector bundle arises from a transversal section of the Grassmannian; in this setting, we say that the bundle is \emph{transversal}. Theorem \ref{thm:TransvConseq} is one of the main results of the paper. We show that, for transversal bundles, the resonance is smooth in general, and is very closely related to the incidence variety --- which is also smooth, being a $\p^1$-bundle over the given linear section of $\gr_2(\mathbb{C}^n)$.  
In the case of the projective line, the only ample transversal rank-two bundle is $\o_{\p^1}(1)\oplus \o_{\p^1}(1)$, see Proposition \ref{prop:p1transv}. 

In Section \ref{sec:Universal}, we study the restrictions of the universal rank-two quotient bundles to transversal linear sections of Grassmann varieties $\gr_2(\mathbb{C}^n)$ and determine their resonance loci. Since resonance varieties themselves are constructed starting from linear sections of Grassmannians, these bundles are the most natural to consider. In this case, we show that the resonance arises from the orthogonal linear section, see Theorem \ref{thm_E} and Corollary \ref{cor_E}. 

In Section \ref{sec:LowDim}, we analyze in detail the case of low-dimensional Grassmannians $\gr_2(\mathbb{C}^4)$, $\gr_2(\mathbb{C}^5)$, and $\gr_2(\mathbb{C}^6)$. In each of these cases we provide a complete description of the \mbox{resonance} of the restricted universal rank-two quotient bundle. Moreover, we observe that a linear section $\gr_2(\mathbb{C}^n)\cap \p K$ is transversal if and only if $\gr_2 ((\mathbb{C}^n)^\vee)\cap \p K^\perp$ is transversal, Theorems \ref{thm:LowDim_transversal_n=4,5} and \ref{thm:g(8,15)}. For $n = 4$ or $n = 5$ this follows rather easily from the projective self-duality of the Grassmannian (see, for example \cite[Proposition 2.24]{debarre_gushel-mukai_2018}), however, for $n = 6$ proof is much more involved and uses the foundational work of Mukai \cite{mukai_curves_1992}. Using the properties of the resonance, we prove that, for one-dimensional sections of $\gr_2(\mathbb{C}^6)$, the unique stable bundle with $6$ independent global sections and canonical determinant coincides with the restriction of the universal rank-two quotient bundle, Proposition \ref{prop:E=E_M}. In all these cases of transversal linear sections of $\gr_2(\mathbb{C}^n)$ with $n\in\{4,5,6\}$, Question \ref{ques:Generic} has a positive answer. For instance, our analysis proves that any resonance variety which is the disjoint union of fourteen lines in $\p^5$ is the resonance of a Mukai bundle on a general genus $8$ curve.

\medskip

Throughout the paper, we work over the field of complex numbers, and use the following notation, convention and terminology.

\begin{itemize}
    \item[-] Following \cite{mukai_curves_1992}, a $g^r_d$ on a smooth projective curve $C$ is a line bundle on $C$ of degree $d$ having at least $r+1$ independent global sections (note the difference from the standard terminology). Similarly, a \emph{pencil} on a smooth projective variety is a line bundle with at least two linearly independent global sections.
    \item[-] A \emph{sub-line bundle} of a vector bundle on a smooth projective variety is a rank-one locally free subsheaf. A \emph{subpencil} of a vector bundle is a sub-line bundle which is a pencil.
\end{itemize}

\section{Preliminaries}
\label{sec:prelim}

In this section, we consider a fixed smooth, connected, complex projective variety $X$ and a vector bundle $E$ on $X$. We also denote by $V^\vee$ the space $H^0(X,E)$ of global sections of $E$.

\subsection{Saturation of subsheaves}
The saturation of subsheaves of the locally free sheaf $E$ is a well-known classical method of "completing" subsheaves in a certain sense. We recall the definitions and some of the fundamental properties.

\begin{defn}
A subsheaf $\cF \subseteq E$ is called \emph{saturated} if the quotient $E/\cF$ is torsion-free.
\end{defn}

Note that any saturated subsheaf of $E$ is reflexive, see for example \cite[p. 28]{Huybrechts-Lehn_2010}. In particular, a rank-one saturated subsheaf of $E$ is invertible.

The \emph{saturation} of a subsheaf $\cF$ of $E$ (see \cite[Definition 1.1.5]{Huybrechts-Lehn_2010}) is defined by

\[
\cF^{\mathrm{sat}}:=\ker(E \longrightarrow (E/\cF)/(\mathrm{tors}(E/\cF)));
\]
It is immediate that
\[
\cF^{\mathrm{sat}}\cong (\cF^{\vee\vee})^{\mathrm{sat}}=\ker(E \longrightarrow (E/\cF^{\vee\vee})/(\mathrm{tors}(E/\cF^{\vee\vee}))).
\]

By the definition, it is straightforward to check the following.

\begin{prop} \label{prop:sat}
Let $\cF \subseteq \cG \subseteq E$ be two subsheaves of $E$. Then $\cF^{\text{sat}} \subseteq \cG^\text{sat}$. Moreover, if $\cF$ and $\cG$ have the same rank, then $\cF^{\text{sat}} = \cG^\text{sat}$.
\end{prop}

An important source of examples of subsheaves is provided by evaluation maps. Specifically, if $U\subseteq V^\vee$ is a non-zero subspace, consider 
\[
\operatorname{ev}_U : U \otimes \o_X \longrightarrow E
\]
the evaluation on sections in $U$. Then $\im(\operatorname{ev}_U)$ is a subsheaf of $E$ that is not saturated in general. If its rank equals one, then its saturation is an invertible sheaf on $X$, which we denote by $L_U$. If $U$ is one-dimensional, generated by a section $a$, then we use the notation $L_a$.

Using the saturations of type $L_a$, it readily follows from Proposition \ref{prop:sat} that

\begin{prop} \label{prop:sat_section}
If $\cL_1$ and $\cL_2$ are two rank-one subsheaves of $E$ having a common non-zero section, then $\cL_1^\text{sat} = \cL_2^\text{sat}$.
\end{prop}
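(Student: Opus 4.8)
The plan is to funnel both saturations through the canonically defined invertible sheaf $L_a$ attached to the common section, so that the two saturations coincide by construction. Let $a \in H^0(X,E)$ be the common non-zero section, meaning that $a$ lies in the image of both $H^0(X,\cL_1)$ and $H^0(X,\cL_2)$ under the inclusions $\cL_i \hookrightarrow E$.

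First I would observe that the section $a \colon \o_X \to E$ factors through each inclusion $\cL_i \hookrightarrow E$, so that $\im(\operatorname{ev}_a) \subseteq \cL_i$ for $i = 1,2$, where $\operatorname{ev}_a$ denotes the evaluation on the line $\langle a \rangle \subseteq V^\vee$. Since $a$ is non-zero, $\im(\operatorname{ev}_a)$ is a rank-one subsheaf of $E$, and each $\cL_i$ is rank-one by hypothesis. Thus $\im(\operatorname{ev}_a)$ and $\cL_i$ are two subsheaves of $E$ of the same rank, one contained in the other.

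Applying the equal-rank clause of Proposition \ref{prop:sat} to the chain $\im(\operatorname{ev}_a) \subseteq \cL_i \subseteq E$ then yields $(\im(\operatorname{ev}_a))^{\text{sat}} = \cL_i^{\text{sat}}$, that is $L_a = \cL_i^{\text{sat}}$, for both $i=1$ and $i=2$. Comparing the two identities gives $\cL_1^{\text{sat}} = L_a = \cL_2^{\text{sat}}$, which is the desired conclusion.

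The argument is essentially formal once Proposition \ref{prop:sat} is available, so I do not expect a genuine obstacle; the only points requiring care are the factorization of $a$ through each $\cL_i$, which is precisely the content of the hypothesis that $a$ is a \emph{common} section, and the elementary verification that $\im(\operatorname{ev}_a)$ has rank exactly one, so that the equal-rank assertion of Proposition \ref{prop:sat} genuinely applies.
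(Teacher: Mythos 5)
Your proof is correct and is essentially the argument the paper intends: it states that the proposition follows from Proposition \ref{prop:sat} "using the saturations of type $L_a$", which is exactly your route of sandwiching $\im(\operatorname{ev}_a)$ inside each $\cL_i$ and invoking the equal-rank clause to conclude $\cL_1^{\mathrm{sat}} = L_a = \cL_2^{\mathrm{sat}}$. You have merely written out the details the paper leaves implicit, and done so accurately.
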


Saturation will turn out to be essential in the constructions that follow in the next section.

\subsection{Two degeneracy loci associated to a vector bundle}
In the subsequent sections, we will work with two degeneracy loci that we describe here. The first one lives in the product $X\times \p V^\vee$ and gives a natural scheme structure on the incidence locus
\begin{equation}
\label{eqn:Incidence_XxP}
\{(x,[a]):\ x\in V(a)\}.
\end{equation}
The construction goes as follows. We consider the projections $\mu_1 : X \times \p V^\vee \longrightarrow X$ and $\mu_2 : X \times \p V^\vee \longrightarrow \p V^\vee$. We have a natural identification 
\[
H^0(X\times \p V^\vee,\mu_1^*(E)\otimes \mu_2^*(\o_{\p V^\vee}(1)))\cong V^\vee\otimes V,
\]
and the section corresponding to $\mathrm{id}_{V^\vee}$ induces a sheaf morphism
\begin{equation}
\label{eqn:Incidence-scheme_XxP}
\mu_1^*(E^\vee )\otimes \mu_2^*(\o_{\p V^\vee}(-1)) \longrightarrow \o_{X\times \p V^\vee}
\end{equation}
whose cokernel is supported on the incidence locus (\ref{eqn:Incidence_XxP}). Denote by $V(E)$ the scheme defined by the morphism (\ref{eqn:Incidence-scheme_XxP}), so that we have an exact sequence of sheaves
\begin{equation}
\label{eqn:Incidence-exact-seq_XxP}
\mu_1^*(E^\vee )\otimes \mu_2^*(\o_{\p V^\vee}(-1))\longrightarrow \o_{X\times \p V^\vee}\longrightarrow \o_{V(E)}\longrightarrow 0.
\end{equation}
Obviously, the ideal sheaf $\mathcal{I}_{V(E)}$ coincides with the image of the morphism (\ref{eqn:Incidence-scheme_XxP}). Also, for any non-zero section $a\in V^\vee$, the restriction of the sequence (\ref{eqn:Incidence-exact-seq_XxP}) to $X_a := X \times \{[a]\}$ 
%$=\mu_2^{-1}([a])$ 
gives an exact sequence
\begin{equation}
\label{eqn:exact-seq-a}
E^\vee\longrightarrow \o_X\longrightarrow \o_{V(a)}\longrightarrow 0,
\end{equation}
where $V(a)\subseteq X$ is the zero scheme of the section $a$.

The second degeneracy locus that we will be using lies inside the product $X\times \gr_2(V^\vee)$. Denote by $\pi_1:X\times \gr_2(V^\vee) \longrightarrow X$ and $\pi_2:X\times \gr_2(V^\vee) \longrightarrow \gr_2(V^\vee)$ the two canonical projections. Denote by $\mathcal{U}^\vee$ the universal rank-two quotient bundle on $\gr_2(V^\vee)$, whose space of global sections naturally identifies with $V$. Hence, we have a natural isomorphism
\[
H^0(X\times \gr_2(V^\vee),\pi_1^*(E)\otimes \pi_2^*(\mathcal{U}^\vee))\cong V^\vee\otimes V,
\]
and the section corresponding to $\mathrm{id}_{V^\vee}$ induces a sheaf morphism
\begin{equation}
\label{eqn:Degeneration-_XxG}\pi_1^*(E^\vee ) \longrightarrow \pi_2^*(\mathcal{U}^\vee)
\end{equation}
whose cokernel we denote by $\mathcal{C}$, so that we have an exact sequence
\begin{equation}
\label{eqn:Degeneration-exact-seq_XxG}
\pi_1^*(E^\vee )\longrightarrow \pi_2^*(\mathcal{U}^\vee)\longrightarrow \mathcal{C}\longrightarrow 0.
\end{equation}
For any $\Lambda \in \gr_2( V^\vee)$, the restriction to $X_\Lambda := \pi_2^{-1}(\Lambda)$ of the exact sequence (\ref{eqn:Degeneration-exact-seq_XxG}) is
\begin{equation}
\label{eqn:exact-seq-Lambda}
E^\vee \longrightarrow \Lambda^\vee \otimes \cO_X \longrightarrow \mathcal{C}|_{X_\Lambda} \longrightarrow 0.
\end{equation}

In what follows, we will explicitly use the sheaves $\cO_{V(E)}$ and $\mathcal{C}$ and the flattening stratifications they define in connection with the geometry of the resonance.

\section{Resonance of vector bundles}
\label{sec:Resonance}

\subsection{Generalities}
We begin by recalling the definition of the projective version of the resonance variety introduced by Papadima and Suciu in \cite{papadima_vanishing_2015}, see also \cite{aprodu_koszul_2024} and \cite{aprodu_reduced_2024}. Let $V$ be a complex vector space of dimension $n\ge 4$ and $K \subseteq \bigwedge^2V$ a linear subspace. Denote by $K^\perp\subseteq \bigwedge^2V^\vee$ the orthogonal of $K$. The \textit{projectivized resonance} of the pair $(V,K)$ is the following subvariety of $\p V^\vee$:
\[
\r(V,K) := \left\{[a]\in \p V^\vee : \exists \ b\in V^\vee \ \text{such that}\ 0\neq a \wedge b \in K^\perp \right\}.
\]
By definition, $\r(V,K)$ is the variety swept out by the projective lines corresponding to the points of $\g \cap \p K^\perp$, where $\g \hooklongrightarrow \p(\bigwedge^2V^\vee)$ is the Grassmannian of lines in $\p V^\vee$, or two-dimensional subspaces in $V^\vee$, in its Pl\"ucker embedding. More precisely, if we consider the incidence diagram
\begin{align} \label{diagrama_incidenta}
\begin{tikzcd}[ampersand replacement = \&, column sep = small]
    \Xi_V \arrow[r, "\text{pr}_2\ "] \arrow[d, swap, "\text{pr}_1"] \& \g \& \\
    \p V^\vee \& 
\end{tikzcd}
\end{align}
where $\Xi_V = \{([a], \Lambda): a \in \Lambda\}\subseteq \p V^\vee \times \g$, then $\r(V,K) = \text{pr}_1(\text{pr}_2^{-1}(\g \cap \p K^\perp))$. Here, $\text{pr}_2$ realizes $\Xi_V$ as the projectivization of the universal rank-two subbundle on $\g$, while $\text{pr}_1$ identifies $\Xi_V$ with the projectivization of the tangent bundle on $\p V^\vee$. In particular, both $\text{pr}_1$ and $\text{pr}_2$ are smooth morphisms.
When endowed with the reduced structure, we refer to $\r(V,K)$ as the \emph{resonance variety}. Note, however, that other natural scheme structures can also be defined on $\r(V,K)$, see \cite{aprodu_reduced_2024}. These scheme structures are related to the fact that the resonance locus is the set-theoretic support of the \emph{Koszul module} $W(V,K)$ which is the graded $\mathrm{Sym}(V)$--module defined as the homology at the middle of the complex
\[
K\otimes \mathrm{Sym}(V)\longrightarrow V\otimes \mathrm{Sym}(V)\longrightarrow \mathrm{Sym}(V)
\]
induced by the classical Koszul exact complex
\[
\textstyle{\bigwedge^2} V\otimes \mathrm{Sym}(V)\longrightarrow V\otimes \mathrm{Sym}(V)\longrightarrow \mathrm{Sym}(V).
\]
For more details on Koszul modules and their relations with resonance, we refer to \cite{papadima_vanishing_2015}, \cite{aprodu_koszul_2022}, \cite{aprodu_reduced_2024}.

\medskip

Under additional extra hypotheses --- for example in the finite case --- the intersection $\g \cap \p K^\perp$ is the \emph{Fano variety} of the resonance, i.e. variety of projective lines entirely contained in $\r(V,K)$. However, in general, the intersection is only included in the Fano variety, without being equal to it. For example, there exist non-trivial cases where the resonance is the entire projective space, and hence the corresponding Fano variety coincides with the Grassmannian. Another simple instance will be provided by Proposition \ref{prop_n = 4, k = 3}. 

\medskip

This general construction has been related to vector bundles in \cite{aprodu_koszul_2024} and \cite{aprodu_reduced_2024} in the following natural way. Let $X$ be a smooth projective variety and $E$ a vector bundle on $X$ of rank at least two. Consider the second determinant map
\[
d_2: \textstyle{\bigwedge^2}H^0(X,E) \longrightarrow H^0(X,\textstyle{\bigwedge^2E}).
\]
In the rank-two case, it is common to denote $d_2$ by $\det$. 

Taking $V := H^0(X,E)^\vee$ and $K := \ker(d_2)^\perp$, yields the \textit{resonance variety} $\r(E): = \r(V,K)$ associated with the vector bundle $E$.  We also use the notation $\Xi_E$ for the incidence variety.

One typical example is obtained when $E=\Omega^1_X$. In this case, the map in question is the natural wedge product map on forms 
\[
\textstyle{\bigwedge^2}H^0(\Omega^1_X) \longrightarrow H^0(\Omega^2_X),
\]
and the resonant elements are precisely the pullbacks of forms on curves of genus $\ge 2$ via suitable fibrations on $X$, see the Castelnuovo-de Franchis Theorem, for example~\cite{catanese_1991}.

\medskip

In the general case of an arbitrary vector bundle $E$, we denote the intersection $\g\cap \p K^\perp$ by $\g(E)$.
There are two possible scheme structures that can be defined on $\mathbb{G}(E)$: the reduced structure, on one hand, and the scheme-theoretic intersection structure, on the other. Unless otherwise stated, we shall consider the reduced structure on $\mathbb{G}(E)$ in what follows. 

By  definition, it is readily seen that $\g(E)$ consists of all two-dimensional subspaces $\Lambda\subseteq H^0(X,E)$ that generate a rank-one subsheaf of $E$. In order to obtain more specific geometric descriptions of $\g(E)$ and $\r(E)$ we shall use \emph{saturated sub-line bundles}.

We record the following useful property of saturations that follows directly from Proposition \ref{prop:sat_section}, see also \cite[Proposition 6.1]{aprodu_reduced_2024}.

\begin{prop}
\label{prop_gE} 
Let $\Lambda_1, \Lambda_2 \in \g(E)$ such that $\Lambda_1 \cap \Lambda_2 \neq \{0\}$. If $\cL_1$ is the rank-one subsheaf generated by $\Lambda_1$ and $\cL_2$ is the rank-one subsheaf generated by $\Lambda_2$, then $\cL_1^{\operatorname{sat}} = \cL_2^{\operatorname{sat}}$. 
\end{prop}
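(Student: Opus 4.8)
The plan is to reduce the statement immediately to Proposition \ref{prop:sat_section} by exhibiting a single global section that is common to both rank-one subsheaves $\cL_1$ and $\cL_2$. Since by hypothesis $\Lambda_1 \cap \Lambda_2 \neq \{0\}$, I would first choose a nonzero section $a \in \Lambda_1 \cap \Lambda_2 \subseteq H^0(X,E)$.

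The key observation is that this $a$ is a section not merely of $E$ but of each $\cL_i$. Indeed, by the description of $\g(E)$ recorded above, each $\Lambda_i \in \g(E)$ generates a rank-one subsheaf, namely $\cL_i = \im(\operatorname{ev}_{\Lambda_i})$, where $\operatorname{ev}_{\Lambda_i}\colon \Lambda_i \otimes \o_X \to E$ is the evaluation on sections in $\Lambda_i$. Viewing $a$ as a morphism $\o_X \to E$, it factors through $\operatorname{ev}_{\Lambda_i}$ precisely because $a \in \Lambda_i$; hence $a$ defines a nonzero global section of $\cL_i$ for $i = 1,2$. Thus $\cL_1$ and $\cL_2$ are two rank-one subsheaves of $E$ that share the common nonzero section $a$.

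Finally, I would invoke Proposition \ref{prop:sat_section} directly: two rank-one subsheaves of $E$ possessing a common nonzero section have equal saturations, so $\cL_1^{\operatorname{sat}} = \cL_2^{\operatorname{sat}}$, as claimed. There is no genuine obstacle in this argument; the only point demanding care is the verification that $a$ descends to a section of each $\cL_i$ rather than of $E$ alone, and this is exactly where the definition of $\cL_i$ as the image of the evaluation map on $\Lambda_i$ is used.
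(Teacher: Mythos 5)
Your proof is correct and matches the paper's intended argument exactly: the paper states that Proposition \ref{prop_gE} follows directly from Proposition \ref{prop:sat_section}, and your reduction --- choosing a nonzero $a \in \Lambda_1 \cap \Lambda_2$, checking that $a$ factors through each $\cL_i = \im(\operatorname{ev}_{\Lambda_i})$, and then applying Proposition \ref{prop:sat_section} --- is precisely that deduction, with the one point requiring care (that $a$ is a section of $\cL_i$, not just of $E$) spelled out properly.
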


In the absence of the saturation condition, the equality $\cL_1 = \cL_2$ is not guaranteed, as the example below shows.

\begin{exmp}
\label{ex:O(2)O(2)}
Let $E=\o_{\p^1}(2)\oplus\o_{\p^1}(2)$, and consider two sub-line bundles, $\cL_1$ and $\cL_2$, isomorphic to $\o_{\p^1}(1)$, but embedded in $E$ in two different ways. The first embedding is given by the map $(x_0,x_0)$, while the second is defined by the map $(x_1,x_1)$, where $x_0,x_1$ are the homogeneous coordinates on the projective line. The points in the Grassmannian \mbox{correspond} to the spaces generated by the global sections $\{(x_0^2,x_0^2), (x_0x_1,x_0x_1)\}\subseteq H^0(E)$ and $\{(x_0x_1,x_0x_1), (x_1^2,x_1^2)\}\subseteq H^0(E)$, respectively. Note that the two spaces have a common non-zero section, but obviously $\cL_1 \neq \cL_2$. However, their saturation is the same, namely $\o_{\p^1}(2)$ embedded in $E$ by the map $(1,1)$. 
\end{exmp}

Another consequence of Proposition \ref{prop:sat_section} is that the sub-line bundle $L_\Lambda$ introduced in the previous section is the only saturated sub-line bundle $L$ of $E$ such that $\Lambda \subseteq H^0(X,L)$.
In particular, the set $\g(E)$ can be identified with the set of pairs $(L,\Lambda)$, where $L\subseteq E$ is a saturated sub-line bundle of $E$ and $\Lambda\subseteq H^0(X,L)$ is a two-dimensional subspace. Note that if $h^0(X, L_\Lambda)\ge 3$, then $L_\Lambda=L_{\Lambda'}$ for any $\Lambda'\in\gr_2(H^0(X, L_\Lambda))$.

\medskip

The advantage of working with saturated sub-line bundles is that it provides a natural well-defined map from the resonance to the set of saturated subpencils of the vector bundle in question, see \cite[Proposition 6.1]{aprodu_reduced_2024}. Note that we do have a map from $\g(E)$ to the set of all subpencils of $E$, sending a subspace $\Lambda$ to the double dual of the sheaf it generates. However, Example \ref{ex:O(2)O(2)} shows that this map does not induce a well-defined map on the resonance. Hence, saturation becomes crucial. In the next subsection, we will exploit this observation and expand the discussion initiated in  \cite{aprodu_koszul_2024}, \cite{aprodu_reduced_2024} concerning the geometric properties of the projectivized resonance. Particularly, we have a commutative diagram (\ref{diagrama_rezonanta_multimi}) that will be decomposed into commutative diagrams of algebraic varieties after a suitable stratification.

\medskip

The discussion above proves that the resonance variety $\r(E)$ is the set-theoretic disjoint union of the projective spaces $\p H^0(X,L)$, where $L\subseteq E$ is a saturated sub-line bundle of $E$ having at least two independent global sections.
This property distinguishes the resonance of vector bundles from other resonance loci which may be a non-disjoint union of (maximal) projective subspaces. For further discussion, we refer to the next subsection.

\subsection{Resonance, Quot schemes and flattening stratifications}
Throughout this section, if $(X, \o_X(1))$ is a polarized projective variety and $\cF$ is a coherent sheaf on $X$, we denote by $P_\cF$ the Hilbert polynomial of $\cF$ and by $P_X$ the Hilbert polynomial of $X$.

As mentioned above, $\g(E)$ can be identified with the set of pairs $(L,\Lambda)$ with $L\subseteq E$ saturated and $\Lambda\subseteq H^0(X,L)$ of dimension two. Note that forgetting $\Lambda$ yields a map to the locus of saturated subpencils of $E$. 
In this subsection, we endow this locus with a natural structure of a projective scheme.

\begin{defn-lem} \label{defn-lem_w(E)}
Let $\w(E)$ be the set of all saturated subpencils $L\subseteq E$. Then $\w(E)$ naturally identifies with a subset of the Quot scheme $\operatorname{Quot}_{E^\vee/X}$.
\end{defn-lem}

\begin{proof}
Consider the map
\begin{align*}
\sigma: \{\text{saturated sub-line bundles of $E$}\} \longrightarrow \operatorname{Quot}_{E^\vee/X}, \ \ \  L \longmapsto L^\sigma:=\mathrm{Im}(E^\vee\to L^\vee)
\end{align*}
and notice that $\sigma(L)^\vee = L$ and hence this map is injective.
\end{proof}

Fix a polarization $H = \cO_X(1)$ on $X$ and recall that, scheme-theoretically
\[
\operatorname{Quot}_{E^\vee/X} = \textstyle{\coprod_\Phi}\operatorname{Quot}^{\Phi,H}_{E^\vee/X}
\]
where $\operatorname{Quot}^{\Phi,H}_{E^\vee/X}$ is the Quot scheme of quotients of $E^\vee$ with Hilbert polynomial $\Phi$. Now put
\[
\w^\Phi(E) := \w(E)\cap \operatorname{Quot}^{\Phi,H}_{E^\vee/X}
\]
and note that $\w^\Phi(E)$ is locally closed in the corresponding Quot scheme, since being saturated is an open property. Unless otherwise stated, $\w^\Phi(E)$ will be considered as being endowed with the reduced structure, and hence we have an identification of algebraic varieties
\begin{equation}
    \label{eqn:W(E)_strata}
    \w(E) = \textstyle{\coprod}_\Phi \w^\Phi(E).
\end{equation}

Denoting by $\Xi(E)$ the inverse image of $\g(E)$ in the incidence variety $\Xi_E$ and taking $p_1$ and $p_2$ the corresponding projections to $\r(E)$ and $\g(E)$ respectively, it is immediate that we have a commutative diagram of \emph{sets}

\begin{align} \label{diagrama_rezonanta_multimi}
\begin{tikzcd}[ampersand replacement = \&, column sep = small]
        \Xi(E) \arrow[rr, "p_2"] \arrow[d, swap, "p_1"] \& {}  \& \g(E) \arrow[d, "\gamma"] \\
        \r(E) \arrow[rr, "\rho"]  \& {}  \& \w(E) \mathrlap{{} \hooklongrightarrow \operatorname{Quot}_{X,E^\vee}}
\end{tikzcd}
\end{align}
where $\gamma(\Lambda)=L_\Lambda$ and $\rho$ sends any resonant element $[a]$ to the saturation $L_a$ of the subsheaf of $E$ generated by $a$.  
Note that, taking the fibers of $\gamma$ and $\rho$, we obtain a diagram similar to (\ref{diagrama_incidenta})
\begin{align} \label{diagrama_fiber_multimi}
\begin{tikzcd}[ampersand replacement = \&]
    \Xi_L \arrow[r] \arrow[d] \& \gr_2(H^0(X,L))=\gamma^{-1}(L) \arrow[d] \\
    \p H^0(X,L)=\rho^{-1}(L) \arrow[r] \& \{L\}
\end{tikzcd}
\end{align}
where $\Xi_L$ is the incidence variety inside $\p H^0(X,L)\times \gr_2(H^0(X,L))$. As mentioned earlier, the fiber of the morphism $\Xi_L \longrightarrow \gr_2(H^0(X,L))$ over some $\Lambda$ is $\p \Lambda$ and the fiber of the morphism $\Xi_L \longrightarrow \p H^0(X,L)$ over a point $[a]$ coincides with $\p (H^0(X,L)/\ell_a)$, where $\ell_a \subseteq H^0(X,L)$ is the line generated by $a$. 

\medskip

We investigate next the regularity properties of $\rho$ and $\gamma$. To this end, we shall use the restrictions of the sheaves $\o_{V(E)}$ and $\mathcal{C}$ introduced in the previous section, see (\ref{eqn:Incidence-exact-seq_XxP}) and (\ref{eqn:Degeneration-exact-seq_XxG}), to $X \times \r(E)$ and $X \times \g(E)$ respectively, and their associated flattening stratifications. The flattening stratification was introduced by A. Grothendieck and simplified by D. Mumford with the aim of working with arbitrary families of sheaves, and the definition is the following, see Nitsure's survey \cite[Chapter~5]{fantechi_etal2005}.

\begin{thm} \label{thm:flattening_stratification}
Let $T$ be a scheme of finite type over $\mathbb{C}$ and $\cF$ be a coherent sheaf on $X\times T$. Then the set $I$ of Hilbert polynomials of the restrictions of $\cF$ to the fibers of the projection  $X \times T \rightarrow T$ is a finite set. Moreover, for each $\varphi \in I$, there exists a locally closed subscheme $T^\varphi$ of $T$, such that the underlying set of $T^\varphi$ is formed by those $t$ such that the Hilbert polynomial of $\cF|_{X\times \{t\}}$ is~$\varphi$. In particular, 
\[
T = \textstyle{\coprod}_{\varphi\in I} T^\varphi,
\]
set-theoretically. Moreover, the finite set $I$ can be totally ordered by $\varphi < \psi$ if $\varphi(m) < \psi(m)$ for $m \gg 0$ and 
\[
\overline{T^\varphi} \subseteq \textstyle{\bigcup}_{\varphi \le \psi} T^\psi,
\]
or, equivalently, $\overline{T^\varphi}\setminus T^\varphi \subseteq \textstyle{\bigcup}_{\varphi < \psi} T^\psi$.
In particular, if $\psi$ is the maximal Hilbert polynomial in the set $I$, then the corresponding stratum $T^{\psi}$ is closed.
\end{thm}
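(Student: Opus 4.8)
The plan is to reproduce Mumford's simplification of Grothendieck's construction, reducing the flatness of $\cF$ along the fibres of $p$ to the local freeness of finitely many direct-image sheaves on $T$. First I would fix the polarization $\o_X(1)$, write $p : X \times T \to T$ and $q : X \times T \to X$ for the projections, and set $\cF(m) := \cF \otimes q^*\o_X(m)$. Since $p$ is projective and $T$ is Noetherian, relative Serre vanishing provides a single integer $m_0$ such that for every $m \ge m_0$ one has $R^i p_* \cF(m) = 0$ for $i>0$ and $p^* p_* \cF(m) \to \cF(m)$ is surjective, uniformly over $T$. Writing $F_m := p_* \cF(m)$, each $F_m$ is coherent on $T$, and for every $t$ and every $m \ge m_0$ the higher cohomology $H^i(X_t,\cF_t(m))$ vanishes, so that $\varphi_t(m) = \chi(\cF_t(m)) = \dim_{k(t)} H^0(X_t,\cF_t(m))$. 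The bridge to flatness is the standard criterion: $\cF$ is flat over $T$ in a neighbourhood of $X_t$ if and only if $F_m$ is locally free near $t$ for all $m \ge m_0$, and when this holds $F_m$ commutes with base change, so its rank at $t$ equals $\varphi_t(m)$.

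Next I would install the scheme structure. For a coherent sheaf $F$ on the Noetherian scheme $T$ and an integer $r$, the locus where $F$ is locally free of rank $r$ is locally closed, with a canonical structure cut out by the Fitting ideals $\mathrm{Fitt}_{r-1}(F) = 0$ and $\mathrm{Fitt}_r(F) = \o_T$; this subscheme represents the functor of maps $S \to T$ along which $F$ pulls back to a locally free sheaf of rank $r$. Applying this to each $F_m$ and intersecting, I would define $T^\varphi$ as the locally closed subscheme over which $F_m$ is locally free of rank $\varphi(m)$ for every $m \ge m_0$. Finiteness of $I$ drops out simultaneously: a Hilbert polynomial has degree at most $\dim X$, so it is determined by the ranks of $F_{m_0}, \dots, F_{m_0 + \dim X}$, and each of these coherent sheaves on the Noetherian $T$ takes only finitely many ranks. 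The set-theoretic decomposition $T = \coprod_\varphi T^\varphi$ is then immediate, since $\varphi_t$ is exactly the Hilbert polynomial of the fibre at $t$.

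The main obstacle is precisely the well-definedness of this intersection and its universal property: a priori $T^\varphi$ is cut out by infinitely many local-freeness conditions, and one must show it is a genuine locally closed subscheme for which pulling $\cF$ back along $S \to T$ is flat with fibrewise Hilbert polynomial $\varphi$ exactly when $S \to T$ factors through $T^\varphi$. This is the technical heart of Mumford's argument: using the base-change complex for $\cF$ together with the uniform choice of $m_0$, one shows that the conditions stabilise, so that the infinite intersection is already cut out by the finitely many values $m_0, \dots, m_0 + \dim X$, and that local freeness there forces genuine flatness of $\cF$ (a graded module all of whose high-degree pieces are locally free is flat). I expect this stabilisation, and the uniformity of $m_0$, to be the delicate point.

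Finally, the order and closure relations follow from Grothendieck's semicontinuity theorem. For $m \ge m_0$ the function $t \mapsto \dim_{k(t)} H^0(X_t,\cF_t(m)) = \varphi_t(m)$ is upper semicontinuous, hence can only increase under specialisation. Thus any point of $\overline{T^\varphi}$, being a specialisation of points of $T^\varphi$, carries a polynomial $\psi$ with $\psi(m) \ge \varphi(m)$ for $m \gg 0$, i.e. $\varphi \le \psi$; this yields $\overline{T^\varphi} \subseteq \bigcup_{\varphi \le \psi} T^\psi$ and the equivalent statement for $\overline{T^\varphi} \setminus T^\varphi$. When $\psi$ is maximal the union on the right collapses to $T^\psi$ itself, so $T^\psi$ is closed.
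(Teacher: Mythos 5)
The paper does not actually prove this theorem: it is quoted verbatim from Nitsure's account of the Grothendieck--Mumford flattening stratification, see \cite[Chapter~5]{fantechi_etal2005}, so your proposal must be measured against that standard proof. Its architecture --- the pushforwards $F_m = p_*\cF(m)$, the criterion ``$\cF$ flat near $X_t$ iff $F_m$ locally free near $t$ for all $m \ge m_0$'', rank loci cut out by Fitting ideals, and stabilisation of infinitely many conditions to finitely many --- is indeed the one in the cited source, so the plan is the right one.

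There is, however, a genuine gap at the step you lean on hardest. Relative Serre vanishing gives $R^i p_* \cF(m) = 0$ for $m \ge m_0$ uniformly, but in the absence of flatness this implies nothing about the fibrewise groups $H^i(X_t, \cF_t(m))$, nor does it make the base-change map $F_m \otimes k(t) \to H^0(X_t, \cF_t(m))$ an isomorphism; so your identity $\operatorname{rank}_{k(t)}\bigl(F_m \otimes k(t)\bigr) = \varphi_t(m)$ is unjustified precisely at the points where $\cF$ fails to be flat --- that is, everywhere the theorem has content. This also renders your finiteness argument circular: you deduce finiteness of $I$ from the finitely many fibre ranks of $F_{m_0}, \dots, F_{m_0 + \dim X}$, but the link between those ranks and the polynomials $\varphi_t$ presupposes exactly the uniform base change you have not established. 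In the standard proof, both the finiteness of $I$ and the uniform bound (fibrewise $N$-regularity, hence vanishing and base change for all $m \ge N$ and all $t$ simultaneously) are obtained in one stroke from \emph{generic flatness plus Noetherian induction}: one stratifies $T_{\mathrm{red}}$ into finitely many locally closed pieces over which $\cF$ is flat, applies cohomology-and-base-change and relative Serre vanishing on each piece, and takes maxima; only then does the local-freeness and Fitting-ideal machinery run as you describe. The same dévissage is needed in your final paragraph: Grothendieck's semicontinuity theorem is a statement about flat families, so to compare $\varphi$ with the polynomial at a point of $\overline{T^\varphi}$ one must first restrict $\cF$ to the reduced closure and pass to a flat dense open subset (or argue via upper semicontinuity of $t \mapsto \dim_{k(t)} F_m \otimes k(t)$ combined with surjectivity of the base-change maps for large $m$). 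You did flag the ``uniformity of $m_0$'' as the delicate point and deferred to Mumford, which is fair at the level of a plan, but the tool you name for it cannot deliver it; once generic flatness and Noetherian induction are inserted as the engine, the remainder of your outline goes through as in the cited reference.
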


We are now able to prove the following.

\begin{thm} \label{thm:Flattening-Stratification-Square}
    Notation as above. For any Hilbert polynomial $\Phi$, the subsets $\rho^{-1}(\w^\Phi(E))\subset \r(E)$ and $\gamma^{-1}(\w^\Phi(E))\subset\g(E)$ are locally closed, and the restrictions 
    \[
    \rho^\Phi:\rho^{-1}(\w^\Phi(E)) \longrightarrow \w^\Phi(E)
    \]
    of $\rho$ and 
    \[
    \gamma^\Phi:\gamma^{-1}(\w^\Phi(E)) \longrightarrow \w^\Phi(E)
    \]
    of $\gamma$, respectively, are morphisms of algebraic varieties. In particular, we have a commutative diagram of algebraic varieties:
\begin{center} \label{diagrama_rezonanta_thm}
\begin{tikzcd}[ampersand replacement = \&, column sep = small]
        \Xi^\Phi(E) \arrow[rr, "p_2"] \arrow[d, swap, "p_1"] \& {}  \& \gamma^{-1}(\w^\Phi(E)) \arrow[d, "\gamma^\Phi"] \\
        \rho^{-1}(\w^\Phi(E)) \arrow[rr, "\rho^\Phi"]  \& {}  \& \w^\Phi(E) \mathrlap{{} \hooklongrightarrow \operatorname{Quot}^{\Phi,H}_{E^\vee/X}}
\end{tikzcd}
\end{center}
where $\Xi^\Phi(E)$ denotes the incidence variety between $\rho^{-1}(\w^\Phi(E))$ and $\gamma^{-1}(\w^\Phi(E))$.
Furthermore, we have the following inclusions
\begin{equation} \label{eqn:Strata_Reverse}
\overline{\rho^{-1}(\w^\Phi(E))} \subseteq \textstyle{\bigcup}_{\Psi \le \Phi}{\rho^{-1}(\w^\Psi(E))}
\mbox{ and }
\overline{\gamma^{-1}(\w^\Phi(E))} \subseteq \textstyle{\bigcup}_{\Psi \le \Phi}{\gamma^{-1}(\w^\Psi(E))}.
\end{equation}
\end{thm}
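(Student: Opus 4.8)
The plan is to derive everything from the flattening stratification Theorem~\ref{thm:flattening_stratification} applied to the two sheaves $\cO_{V(E)}$ and $\mathcal{C}$ introduced in Section~\ref{sec:prelim}. First I would treat the map $\rho$. Restricting the incidence sheaf $\cO_{V(E)}$ to $X\times\r(E)$, the fiber over a resonant point $[a]$ is $\cO_{V(a)}$, the structure sheaf of the zero scheme of $a$, whose saturation class is precisely $L_a=\rho([a])$. The exact sequence (\ref{eqn:exact-seq-a}) shows that the Hilbert polynomial of $\cO_{V(a)}$ is determined by, and determines, the Hilbert polynomial $\Phi$ of the quotient $L_a^\sigma=\operatorname{Im}(E^\vee\to L_a^\vee)\in\operatorname{Quot}^{\Phi,H}_{E^\vee/X}$; concretely, $P_{\cO_{V(a)}}=P_X-(P_{E^\vee}-\Phi)$, so the fiber of $\cO_{V(a)}$ has constant Hilbert polynomial exactly along the locus $\rho^{-1}(\w^\Phi(E))$. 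By Theorem~\ref{thm:flattening_stratification} applied with $T=\r(E)$ and $\cF=\cO_{V(E)}|_{X\times\r(E)}$, each such locus is a stratum $T^\varphi$, hence locally closed, establishing the first local-closedness claim.

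Next I would promote $\rho^\Phi$ to an actual morphism of varieties. On the stratum $\rho^{-1}(\w^\Phi(E))$ the restriction of $\cO_{V(E)}$ to the fibers is flat by construction of the flattening stratification, so dualizing/twisting the sequence (\ref{eqn:Incidence-exact-seq_XxP}) produces a flat family of quotients of $E^\vee$ parametrized by $\rho^{-1}(\w^\Phi(E))$ with constant Hilbert polynomial $\Phi$. By the universal property of the Quot scheme $\operatorname{Quot}^{\Phi,H}_{E^\vee/X}$, this flat family is classified by a morphism $\rho^{-1}(\w^\Phi(E))\to\operatorname{Quot}^{\Phi,H}_{E^\vee/X}$ whose image is $\w^\Phi(E)$ and which agrees set-theoretically with $\rho$; this is $\rho^\Phi$. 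The identical argument run with $\mathcal{C}$ in place of $\cO_{V(E)}$, using the sequence (\ref{eqn:Degeneration-exact-seq_XxG}) and its fiberwise version (\ref{eqn:exact-seq-Lambda}) over $T=\g(E)$, yields the local closedness of $\gamma^{-1}(\w^\Phi(E))$ and realizes $\gamma^\Phi$ as a morphism: here the fiber of $\mathcal{C}$ over $\Lambda$ is the cokernel $\mathcal{C}|_{X_\Lambda}$, whose saturation data again pin down the same invariant $\Phi$, so the two stratifications are compatible. The commutativity of the resulting square of varieties is inherited from the commutativity of (\ref{diagrama_rezonanta_multimi}) as a diagram of sets, now upgraded to morphisms, with $\Xi^\Phi(E)$ defined as the fiber product / incidence variety over the $\Phi$-stratum.

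Finally, the closure inclusions (\ref{eqn:Strata_Reverse}) follow from the specialization statement in Theorem~\ref{thm:flattening_stratification}, namely $\overline{T^\varphi}\subseteq\bigcup_{\varphi\le\psi}T^\psi$ for the chosen total order. The only subtlety is the direction of the inequality: the order on $\w^\Phi(E)$ is induced from $\operatorname{Quot}$ via $\Phi$ (Hilbert polynomials of \emph{quotients} of $E^\vee$), whereas the flattening stratification naturally orders by the Hilbert polynomials $\varphi$ of $\cO_{V(E)}$ (resp. $\mathcal{C}$). Because $\varphi$ and $\Phi$ are related by the affine-linear relation $\varphi=P_X-(P_{E^\vee}-\Phi)$ coming from the defining sequences, the map $\Phi\mapsto\varphi$ is order-\emph{reversing} for the dominant-term order; translating $\overline{T^\varphi}\subseteq\bigcup_{\varphi\le\psi}T^\psi$ back through this correspondence produces exactly the inclusions with $\Psi\le\Phi$ as stated. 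The main obstacle I anticipate is precisely this bookkeeping of the partial/total order: making sure that the ordering coming from the flattening stratification of $\cO_{V(E)}$ and $\mathcal{C}$ is correctly matched with the ordering on the Quot-scheme strata indexed by $\Phi$, so that the closure statements come out in the asserted direction $\Psi\le\Phi$ rather than its reverse.
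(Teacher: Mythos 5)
Your overall strategy is exactly the paper's: you stratify $\r(E)$ and $\g(E)$ by applying Theorem \ref{thm:flattening_stratification} to $\cO_{V(E)}|_{X\times\r(E)}$ and $\mathcal{C}|_{X\times\g(E)}$, upgrade the set-theoretic maps $\rho$ and $\gamma$ to morphisms on each stratum via the flat kernel families and the universal property of $\operatorname{Quot}^{\Phi,H}_{E^\vee/X}$, and deduce (\ref{eqn:Strata_Reverse}) from the specialization order in the flattening stratification. But there is a genuine error in precisely the bookkeeping you flagged as the delicate point. From (\ref{eqn:exact-seq-a}), the ideal sheaf $\mathcal{I}_{V(a)}$ is the image of $E^\vee\to\cO_X$, which is exactly the quotient $L_a^\sigma$; hence $\Phi=P_{L_a^\sigma}=P_X-P_{\cO_{V(a)}}$, i.e. $\varphi=P_X-\Phi$, with no occurrence of $P_{E^\vee}$ at all. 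Your relation $\varphi=P_X-(P_{E^\vee}-\Phi)=\Phi+(P_X-P_{E^\vee})$ is what one would get if $\Phi$ denoted the Hilbert polynomial of the \emph{kernel} of $E^\vee\to L_a^\sigma$, whereas the paper's Quot scheme is indexed by the polynomial of the quotient. Moreover, your formula is affine-linear in $\Phi$ with coefficient $+1$, hence order-\emph{preserving}, not order-reversing as you assert; followed literally, it would produce the closure inclusions with $\Psi\ge\Phi$, the opposite of (\ref{eqn:Strata_Reverse}). Your final statement comes out right only because you asserted the correct direction in spite of your own formula.

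The fix is immediate and restores agreement with the paper's proof: use $\varphi=P_X-\Phi$ on the resonance side (so the strata identify as $\r^{P_X-\Phi}(E)=\rho^{-1}(\w^\Phi(E))$), and on the Grassmannian side derive the analogous relation from the exact sequence $0\to L^\sigma_\Lambda\to\Lambda^\vee\otimes\cO_X\to\mathcal{C}|_{X_\Lambda}\to 0$, where the kernel $\cK_\psi$ of $\pi_2^*\cU^\vee|_{X\times\g^\psi(E)}\to\mathcal{C}|_{X\times\g^\psi(E)}$ is flat over the stratum and restricts fiberwise to $L^\sigma_\Lambda$; this gives $\psi=2P_X-\Phi$, a relation you leave implicit when you say the cokernel ``pins down the same invariant.'' Both correspondences are order-reversing, since $\Phi<\Psi$ if and only if $mP_X-\Phi>mP_X-\Psi$ for $m\in\{1,2\}$, which is exactly how the paper obtains (\ref{eqn:Strata_Reverse}). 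The remaining ingredients of your proposal --- flatness of the ideal sheaf deduced from flatness of $\cO_{V(E)\cap(X\times\r^\varphi(E))}$ and of $\cO_{X\times\r^\varphi(E)}$ over the stratum, the classifying morphism to the Quot scheme, and commutativity inherited from the diagram of sets (\ref{diagrama_rezonanta_multimi}) --- match the paper's argument.
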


\begin{proof}
The idea is to prove that the inverse images in question coincide with the strata induced by the flattening stratifications of some suitable coherent sheaves, and apply Theorem \ref{thm:flattening_stratification}. Moreover, the strata will be indexed in reverse order with respect to the set of Hilbert polynomials $\{\Phi\}$ (i.e. a larger Hilbert polynomial $\Phi$ will correspond to more generic quotients), which explains the inclusions (\ref{eqn:Strata_Reverse}).
The restrictions of $\o_{V(E)}$ to $X\times\r(E)$ and $\mathcal{C}$ to $X\times\g(E)$ provide a solution to the problem, as shown below.

We begin by restricting the sequence (\ref{eqn:Incidence-exact-seq_XxP}) to $X \times \r(E)$ to obtain the exact sequence
\[
\mu_1^*(E^\vee) \otimes \mu_2^*(\o_\p(-1))|_{X \times \r(E)} \longrightarrow \o_{X \times \r(E)} \longrightarrow \o_{V(E) \cap (X \times \r(E))} \longrightarrow 0
\]
where ${V(E) \cap (X \times \r(E))}$ is the scheme-theoretic intersection. 
Applying Theorem \ref{thm:flattening_stratification} for the scheme $T = \r(E)$ and the coherent sheaf $\cF = \o_{V(E) \cap (X \times \r(E))}$ on $X \times \r(E)$, we obtain the strata $\{\r^\varphi(E)\}_\varphi$ on the resonance $\r(E)$, indexed by Hilbert polynomials $\varphi$. By the flatness of  $\o_{V(E) \cap (X \times \r^\varphi(E))}$ and of $\o_{X \times \r^\varphi(E)}$ over $\r^\varphi(E)$ we infer that $\mathcal{I}_{V(E) \cap (X \times \r^\varphi(E))}$ is flat over $\r^\varphi(E)$. Using the exact sequence (\ref{eqn:exact-seq-a}), it follows that, for any $[a] \in \r^\varphi(E)$, the restriction of this ideal sheaf to $X_a = X \times \{[a]\}$ is precisely $L_a^\sigma$.
Therefore, since $P_{L^\sigma_a} = P_X - P_{\cO_{V(a)}}$, we have the identification of strata
\[
\r^{P_X - \Phi}(E) = \rho^{-1}(\w^\Phi(E))
\]
and the restriction of the map $\rho$ gives a morphism of algebraic varieties
\begin{equation} \label{eq:rho_Phi}
\rho^\Phi: \r^{P_X - \Phi}(E) \longrightarrow \w^\Phi(E).
\end{equation}

We now restrict the sequence (\ref{eqn:Degeneration-exact-seq_XxG}) to $X \times \g(E)$ to get the exact sequence
\[
\pi_1^*E^\vee|_{X \times \g(E)} \longrightarrow \pi_2^* \cU^\vee|_{X \times \g(E)} \longrightarrow \mathcal{C}|_{X \times \g(E)} \longrightarrow 0.
\]

We apply Theorem \ref{thm:flattening_stratification} for the scheme $T = \g(E)$ and the coherent sheaf $\mathcal{C}|_{X \times \g(E)}$ on $X \times \g(E)$ and obtain the strata $\{\g^\psi(E)\}_\psi$ indexed by the set of Hilbert polynomials $\psi$. Let 
\[
\cK_\psi = \ker\left(\pi_2^* \cU^\vee|_{X \times \g^\psi(E)} \longrightarrow \mathcal{C}|_{X \times \g^\psi(E)}\right)
\]
By the definition of $\g(E)$ and the flatness of $\mathcal{C}|_{X \times \g^\psi(E)}$ we infer that $\cK_\psi$ is flat over $\g^\psi(E)$ and $\cK_\psi|_{X_\Lambda} = L^\sigma_\Lambda$, for any $\Lambda \in \g^\psi(E)$, where $X_\Lambda = X \times \{\Lambda\}$. By the exact sequence
\[
0 \longrightarrow L^\sigma_\Lambda \longrightarrow \Lambda^\vee \otimes \o_X \longrightarrow \mathcal{C}|_{X_\Lambda} \longrightarrow 0
\]
we obtain the following identity between Hilbert polynomials $P_{L^\sigma_\Lambda} = 2 P_X - P_{\mathcal{C}|_{X_\Lambda} }$ and hence 
\[
\g^{2P_X - \Phi}(E) = \gamma^{-1}(\w^\Phi(E))
\]
and the restriction of the map $\gamma$ gives a morphism of algebraic varieties:
\begin{equation} \label{eq:gamma_Phi}
\gamma^\Phi: \g^{2P_X - \Phi}(E) \longrightarrow \w^\Phi(E).
\end{equation}

Consequently, the Hilbert polynomial stratification of the incidence diagram between $\r(E)$ and $\g(E)$ can be completed to a commutative diagram of algebraic varieties (\ref{diagrama_rezonanta}), which has a fibered version (\ref{diagrama_fiber}) over a given point $L \in \w(E)$:

\noindent
\begin{minipage}{0.59\textwidth}
\begin{align} \label{diagrama_rezonanta}
\begin{tikzcd}[ampersand replacement = \&, column sep = small]
        \Xi^\Phi(E) \arrow[rr, "p_2"] \arrow[d, swap, "p_1"] \& {}  \& \g^{2 P_X - \Phi}(E) \arrow[d, "\gamma^\Phi"] \\
        \r^{P_X - \Phi}(E) \arrow[rr, "\rho^\Phi"]  \& {}  \& \w^\Phi(E) \mathrlap{{} \hooklongrightarrow \operatorname{Quot}^{\Phi,H}_{E^\vee/X}}
\end{tikzcd}
\hphantom{{} \hooklongrightarrow \operatorname{Quot}_{X,E^\vee}}
\end{align}
\end{minipage}
\begin{minipage}{0.4\textwidth}
\begin{align} \label{diagrama_fiber}
\begin{tikzcd}[ampersand replacement = \&]
    \Xi_L \arrow[r] \arrow[d] \& \gr_2(H^0(L)) \arrow[d] \\
    \p H^0(L) \arrow[r] \& \{L\}
\end{tikzcd}
\end{align}
\end{minipage}

The relations between the strata (\ref{eqn:Strata_Reverse}) follow from Theorem \ref{thm:flattening_stratification}, since $\Phi < \Psi$ if and only if $m P_X - \Phi > m P_X - \Psi$, for $m \in \{1,2\}$.
\end{proof}

We will illustrate in Section \ref{sec:Curves} how the existence of these flattening stratifications together with the maps $\rho^\Phi$ and $\gamma^\Phi$ can be used to compute the resonance. 

\begin{rem} \label{rmk:Strata_P}
The same arguments as in the above proof applied directly to the sheaves $\o_{V(E)}$ and $\mathcal{C}$ define similar stratifications on $\p H^0(X,E)$ and $\gr_2(H^0(X,E))$, respectively, and the restrictions of these strata over $\r(E)$ and $\g(E)$ give the strata of the Theorem \ref{thm:Flattening-Stratification-Square}. Note however, that the strata on $\p H^0(X,E)$ and $\gr_2(H^0(X,E))$ are in general incompatible, i.e. there does not exist a commutative square similar to (\ref{diagrama_rezonanta}), since for a general $\Lambda$, the kernel of the morphism $\Lambda^\vee\to \mathcal{C}|_{\Lambda}$ is of rank two. 
\end{rem}

The strata of Theorem \ref{thm:Flattening-Stratification-Square} are not necessarily irreducible. One example is obtained for a smooth curve $C$ of even genus on a $K3$ surface $S$ with $\Pic(S)$ generated by $\o_S(C)$. It was proved in \cite[Section 4]{aprodu_koszul_2024} that the resonance is a disjoint union of lines, and we observe that the resonance consists of only one non-empty stratum. However, even though the resonance might be reducible, the existence of the flattening stratification leads to the following characterization of its irreducible components.

\begin{prop} \label{prop:IrredCompStrata}
Any irreducible component of $\r(E)$ is an irreducible component of the closure of a stratum.
\end{prop}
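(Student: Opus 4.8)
The plan is to argue purely topologically, exploiting the fact that the stratification produced by Theorem \ref{thm:Flattening-Stratification-Square} is \emph{finite}. Indeed, Theorem \ref{thm:flattening_stratification} guarantees that only finitely many Hilbert polynomials $\Phi$ occur, so that $\r(E)$ is the union of the finitely many locally closed strata $\rho^{-1}(\w^\Phi(E))$. Writing $S_\Phi := \rho^{-1}(\w^\Phi(E))$, the only inputs I will need are this finiteness, the fact that each $S_\Phi$ is a locally closed subset of the projective variety $\r(E)$, and the characterisation of an irreducible component as a maximal irreducible closed subset.

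First I would fix an irreducible component $Z$ of $\r(E)$ and restrict the stratification to it. Since $Z = \bigcup_\Phi (Z \cap S_\Phi)$, taking closures inside $\r(E)$ yields $Z = \overline{Z} = \bigcup_\Phi \overline{Z \cap S_\Phi}$, a \emph{finite} union of closed subsets of $Z$. Because $Z$ is irreducible, it must coincide with one of the members of this union, so there is a Hilbert polynomial $\Phi$ with $Z = \overline{Z \cap S_\Phi}$. In particular $Z \cap S_\Phi$ is dense in $Z$, and consequently $Z \subseteq \overline{S_\Phi}$.

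Next I would upgrade this inclusion to the assertion that $Z$ is a maximal irreducible closed subset of $\overline{S_\Phi}$, i.e. an irreducible component of $\overline{S_\Phi}$. Suppose $Z \subseteq Z' \subseteq \overline{S_\Phi}$ with $Z'$ irreducible and closed. Since $S_\Phi \subseteq \r(E)$ and $\r(E)$ is closed in the ambient projective space, the closure $\overline{S_\Phi}$ is again a closed subset of $\r(E)$; hence $Z'$ is an irreducible closed subset of $\r(E)$ containing $Z$. But $Z$ is an irreducible component of $\r(E)$, i.e. maximal among the irreducible closed subsets of $\r(E)$, which forces $Z' = Z$. Thus $Z$ is an irreducible component of $\overline{S_\Phi}$, the closure of a stratum, as required.

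I do not expect any genuine obstacle here: the argument is formal and rests solely on the finiteness of the stratification together with the maximality characterisation of irreducible components. The only point demanding a little care is the transfer of maximality from $\r(E)$ to $\overline{S_\Phi}$, and this works precisely because the closure of each stratum is taken inside $\r(E)$ and therefore remains a closed subvariety of $\r(E)$, so that an irreducible closed set strictly enlarging $Z$ within $\overline{S_\Phi}$ would already contradict maximality of $Z$ in $\r(E)$.
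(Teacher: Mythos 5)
Your proof is correct and follows essentially the same route as the paper: both restrict the finite flattening stratification to the component $Z$, use irreducibility of $Z$ to find a stratum whose intersection with $Z$ is dense, and then transfer maximality from $\r(E)$ to the closure of that stratum via the chain of closed inclusions $Z \subseteq \overline{S_\Phi} \subseteq \r(E)$. The only cosmetic difference is that you index the strata as $\rho^{-1}(\w^\Phi(E))$ while the paper writes $\r^\varphi(E)$, but these coincide by the identification established in Theorem \ref{thm:Flattening-Stratification-Square}.
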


\begin{proof}
If $Z$ is an irreducible component of $\r(E)$, then $Z$ has an induced flattening stratification
\[
Z=\textstyle{\bigcup}_\varphi Z^\varphi\mbox{ with } Z^\varphi:=Z\cap\r^\varphi(E).
\]
In this stratification, one stratum $Z^\psi$ is dense,
and hence its closure $\overline{Z^\psi}$ coincides with $Z$. Since $Z$ is an irreducible component of $\r(E)$, the inclusions $\overline{Z^\psi}\subseteq \overline{\r^\psi(E)} \subseteq \r(E)$ show that $Z$ is an irreducible component of $\overline{\r^\psi(E)}$.
\end{proof}

Recall that a projective variety is called \emph{linear} if each of its irreducible components is a projective subspace. An immediate consequence is the following (see \cite[Proposition~6.1]{aprodu_reduced_2024}). 

\begin{prop}
\label{prop:LinearResonance}
The assertions below are equivalent:
    \begin{itemize}
        \item[(a)]
	$\r(E)$ is linear;
	\item[(b)] 
    $\w(E)$ is finite;
	\item[(c)] For every $A \in \Pic(X)$ that can be embedded in $E$ as a saturated sub-line bundle, we have $h^0(E(-A)) = 1$, and there are only finitely many such $A \in \Pic(X)$.
    \end{itemize}
\end{prop}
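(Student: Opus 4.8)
The plan is to establish the cycle (b)$\Rightarrow$(a), (b)$\Leftrightarrow$(c), and (a)$\Rightarrow$(b), with the last implication carrying the real content. The common thread is the dictionary already available from the previous discussion: $\r(E)$ is the set-theoretic disjoint union $\coprod_L \p H^0(X,L)$ over saturated subpencils $L\subseteq E$, and for a fixed $A\in\Pic(X)$ the saturated sub-line bundles of $E$ isomorphic to $A$ correspond exactly to the points of an open subset $U_A$ of $\p H^0(E(-A))$ (the maps $A\to E$ up to scalar, among which being saturated is an open condition). Thus $\w(E)=\coprod_A U_A$, the union running over those $A$ with $h^0(A)\ge 2$ that admit a saturated embedding. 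The equivalence (b)$\Leftrightarrow$(c) is then immediate: $\w(E)$ is finite iff only finitely many classes $A$ contribute and each nonempty $U_A$ is finite, and since $U_A$ is a nonempty open subset of the projective space $\p H^0(E(-A))$, it is finite iff that space is a point, i.e. $h^0(E(-A))=1$. For (b)$\Rightarrow$(a) I note that when $\w(E)$ is finite, $\r(E)=\bigcup_{L}\p H^0(X,L)$ is a finite union of projective subspaces of $\p V^\vee$ (each $H^0(X,L)\subseteq V^\vee$ is linear), whose irreducible components are the maximal members of the union; hence $\r(E)$ is linear.

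For (a)$\Rightarrow$(b) I argue by contradiction, assuming $\w(E)$ infinite. Since $\w(E)=\coprod_\Phi\w^\Phi(E)$ is a finite stratification, some stratum $\w^\Phi(E)$ is infinite, hence of dimension $\ge 1$. By Theorem~\ref{thm:Flattening-Stratification-Square} the map $\rho^\Phi$ is a morphism onto $\w^\Phi(E)$ whose fibres are the pieces $\p H^0(X,L)$, each of dimension $\ge 1$, so $\rho^{-1}(\w^\Phi(E))$ has dimension $\ge 2$ and is a disjoint union of a positive-dimensional family of pairwise disjoint positive-dimensional linear subspaces. By Proposition~\ref{prop:IrredCompStrata} the closure of this stratum contains an irreducible component $Z$ of $\r(E)$ with $\dim Z\ge 2$, which by (a) is a projective subspace $Z=\p^m$. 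As the pieces partition $\r(E)$, they partition $Z=\p^m$ into pairwise disjoint linear subspaces, the generic one of some fixed dimension $d$ with $1\le d<m$, varying in a family of dimension $m-d\ge 1$.

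The main obstacle is to exclude this configuration, i.e. to prove that a projective space $\p^m$ is never the disjoint union of a positive-dimensional family of pairwise disjoint linear subspaces of dimension $d$ with $0<d<m$. I would settle this by a Picard-number argument. The partition yields a \emph{leaf map} $f\colon\p^m\to\Gamma$ to the Grassmannian $\Gamma$ of $d$-planes, sending a point to the unique leaf through it; since the leaves are disjoint and cover $\p^m$, the incidence correspondence projects bijectively onto $\p^m$, and because $\p^m$ is smooth, hence normal, this projection is an isomorphism, so $f$ is an honest morphism. It is non-constant (the family is positive-dimensional) and contracts each leaf to a point. As $\Pic(\p^m)=\mathbb{Z}\cdot\cO(1)$, we get $f^*\cO_\Gamma(1)=\cO_{\p^m}(e)$ with $e\ge 1$; restricting to one leaf $\Lambda\cong\p^d$, which is linearly embedded so that $\cO_{\p^m}(1)|_\Lambda=\cO_{\p^d}(1)$ and on which $f$ is constant, gives $\cO_{\p^d}(e)=\cO_\Lambda$, forcing $e=0$, a contradiction. (Equivalently, $\p^m$ would be a $\p^d$-bundle over a positive-dimensional base, contradicting $b_2(\p^m)=1$.) This shows $\w(E)$ is finite and closes the cycle.

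I expect the delicate points to be purely in the reduction rather than in the final contradiction: namely, verifying that the leaf map is genuinely defined on all of $\p^m$ (through bijectivity of the incidence correspondence and normality of $\p^m$), and the bookkeeping that isolates a component covered by \emph{equidimensional}, positive-dimensional leaves --- degenerate leaves of larger dimension over a proper closed subset must be accounted for, but they do not affect the generic leaf dimension $d$ used above. Once the leaf map is available, the Picard-number contradiction is immediate.
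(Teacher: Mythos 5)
Your treatment of (b)$\Leftrightarrow$(c) and (b)$\Rightarrow$(a) is correct and matches the paper (which dismisses these as clear). The genuine gap is in (a)$\Rightarrow$(b), at the very first reduction. Proposition \ref{prop:IrredCompStrata} says that every irreducible component of $\r(E)$ is a component of the closure of \emph{some} stratum; it does \emph{not} say that the closure of a given (infinite) stratum contains a component of $\r(E)$, which is what you invoke. This converse is false in general: for $E=\cO_{\p^1}(2)\oplus\cO_{\p^1}(2)$ (Example \ref{exmp:O(2)O(2)}) the stratum over $\w_2(E)\cong\p^1$ is closed, three-dimensional, infinite over its $\w$-stratum, and strictly contained in the unique four-dimensional component $\overline{\r_1(E)}$ of $\r(E)$ --- so the closure of the infinite stratum contains no component at all. (Hypothesis (a) fails there too, but the point is that the cited proposition cannot deliver your claim, and you give no argument replacing it.) Even granting a component $Z=\p^m$ meeting the infinite stratum, your configuration does not follow: the pieces of the \emph{chosen} stratum need not cover $Z$ densely (in the example above they sweep out a proper subvariety), while the full partition of $Z$ by $\{\mbox{piece}\cap Z\}$ mixes strata, is not a single algebraic family, has jumping leaf dimension (even within one stratum, since $h^0(L)$ may jump with $\Phi$ fixed), and may contain zero-dimensional members coming from pieces lying in other components and merely slicing $Z$. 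Exactly because of this, the leaf map you build --- the step you yourself flag as delicate --- does not exist as a morphism: the closure of the incidence correspondence can fail to be bijective over limit points, Zariski's main theorem then yields only a rational map, and for generic leaf dimension $d\ge 2$ every leaf necessarily meets a codimension-two indeterminacy locus (any $\p^d$ meets any subvariety of dimension $\ge m-d$ in $\p^m$), so restricting the pulled-back polarization to a leaf no longer gives a contradiction.

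The paper avoids all of this by arguing forward rather than by contradiction: for \emph{each} component $Z$ of $\r(E)$, Proposition \ref{prop:IrredCompStrata} (in its correct direction) provides a dense open $U\subseteq Z$ on which $\rho$ is a morphism to a component of $\w^\Phi(E)$, with fibers the complete, positive-dimensional projective subspaces $\p H^0(X,L)$, necessarily contained in $Z$; since $Z$ is a projective space by (a), such a morphism must be constant, so each component is a single piece $\p H^0(X,L)$, and the set-theoretic disjointness of the pieces then forces $\w(E)$ to be finite. Note that your Picard idea is sound precisely in this local form and could be used to justify the constancy step: complete positive-dimensional fibers inside $U$ cannot meet any divisorial component of $Z\setminus U$ (projective dimension theorem), so $Z \setminus U$ has codimension at least two, whence $\operatorname{Pic}(U)=\mathbb{Z}\cdot\cO(1)|_U$ and $H^0(U,\cO_U)=\mathbb{C}$; pulling back an ample class from the Quot scheme and restricting to a fiber forces it to be trivial, hence the morphism is constant. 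Redirecting your argument along these lines --- constancy on each component, then the disjointness endgame --- repairs the proof; as written, the reduction to your ``partitioned $\p^m$'' configuration is not valid.
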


\begin{proof}
It is clear that (b) and (c) are equivalent.

If $\w(E)$ is finite, then $\r(E)$ is a finite disjoint union of subspaces of type $\p H^0(X,L)$ with $L\in\w(E)$.  Conversely, consider $Z$ an irreducible component of $\r(E)$. By Proposition \ref{prop:IrredCompStrata}, it is an irreducible component of the closure of a stratum $\r^\psi(E)$. In particular, there is a dominant morphism from an open subset $U\subset Z$ to an irreducible component of $\w^{P_X - \psi}(E)$, whose fibers are positive-dimensional projective subspaces. However, since $Z$ is itself a projective space, this is possible only if the morphism in question is constant.
It follows that the corresponding irreducible component of $\w^{P_X - \psi}(E)$ is a point.
\end{proof}

In particular, we see that any linear resonance with two intersecting irreducible components cannot be associated to a vector bundle. This is the case for the resonance of right-angled Artin groups, see \cite[Section 8]{aprodu_reduced_2024} for details. For instance, if
\[
G_\Gamma :=  \langle\ g_1, \ldots, g_n \ : \ g_i \cdot g_{i+1} = g_{i+1} \cdot g_i \ \text{for} \ 1 \le i \le n-1\ \rangle
\]
is the right-angled Artin group associated with the path graph $\Gamma = P_n$, then the projectivized resonance of $G_\Gamma$ --- which is $\r(V,K)$ for a complex vector space $V$ with a basis $e_1, \ldots, e_n$ and $K \subseteq \bigwedge^2 V$ the linear subspace generated by $e_1 \wedge e_2, \ldots, e_{n-1} \wedge e_n$ --- will be a union of $n-2$ hyperplanes in $\p V^\vee$.

\medskip

Now, let us also emphasize the following useful fact.

\begin{prop} \label{prop:dim_g(E)}
If $\dim \g(E) \le 1$, then $h^0(X,L)=2$ for all $L\in \w(E)$ and the map
\[
\gamma:\g(E) \longrightarrow \w (E)
\]
is one-to-one.
\end{prop}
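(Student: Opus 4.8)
The plan is to read off the conclusion directly from the fiber structure of $\gamma$ recorded in diagram (\ref{diagrama_fiber_multimi}). For each $L \in \w(E)$ the fiber $\gamma^{-1}(L)$ equals $\gr_2(H^0(X,L))$: indeed, by the characterization noted just after Example \ref{ex:O(2)O(2)}, a two-dimensional subspace $\Lambda$ satisfies $L_\Lambda = L$ if and only if $\Lambda \subseteq H^0(X,L)$. First I would verify that this fiber is a genuine closed subvariety of $\g(E)$. If $\Lambda \subseteq H^0(X,L)$, the sections of $\Lambda$ factor through the inclusion $L \hookrightarrow E$, so they generate a nonzero rank-one subsheaf of $E$, whence $\Lambda \in \g(E)$; thus $\gr_2(H^0(X,L)) \subseteq \g(E)$, and this sub-Grassmannian is closed in $\gr_2(H^0(X,E))$, hence closed in $\g(E)$.

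The core of the argument is then a dimension count sharpened by a parity observation. Writing $k = h^0(X,L)$, the closed subvariety $\gamma^{-1}(L) = \gr_2(\mathbb{C}^k)$ of $\g(E)$ has dimension $2(k-2)$, so the hypothesis gives $2(k-2) \le \dim \g(E) \le 1$. Since the left-hand side is a nonnegative even integer, the only value compatible with the bound $1$ is $2(k-2) = 0$; hence $k = 2$. As $L$ is a subpencil we already have $k \ge 2$, so this forces $h^0(X,L) = 2$ for every $L \in \w(E)$.

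Finally, with $h^0(X,L) = 2$ the fiber $\gamma^{-1}(L) = \gr_2(\mathbb{C}^2)$ is a single point, so $\gamma$ is one-to-one. The only step requiring care is the passage from the set-theoretic description of the fibers to their being closed subvarieties whose dimension may legitimately be compared with $\dim \g(E)$; once this is in place, the parity remark --- that a nonnegative even integer bounded by $1$ must vanish --- is precisely what upgrades "the fibers are at most one-dimensional" to the sharp equality $h^0(X,L) = 2$. Notably, this argument is elementary and does not invoke the flattening stratification of Theorem \ref{thm:Flattening-Stratification-Square}.
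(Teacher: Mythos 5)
Your proof is correct and is essentially the paper's own argument: both identify the fiber $\gamma^{-1}(L)$ with $\gr_2(H^0(X,L))$ and deduce $2h^0(X,L)-4 \le \dim \g(E) \le 1$, which by parity and nonnegativity forces $h^0(X,L)=2$ and hence point fibers. Your explicit verification that the fiber is a closed subvariety (via the uniqueness of the saturated $L$ with $\Lambda \subseteq H^0(X,L)$) only makes precise what the paper reads off from diagrams (\ref{diagrama_rezonanta}) and (\ref{diagrama_fiber}), so there is no substantive difference.
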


\begin{proof}
From (\ref{diagrama_rezonanta}) and (\ref{diagrama_fiber}), it follows that for any $L \in \w(E)$, we have the inequality 
\[
2 h^0(X,L) - 4 \le \dim \g(E).
\]
Thus, if $\dim \g(E) \le 1$, we obtain that $h^0(X,L) = 2$, and hence the fiber of $\gamma$ over $L$ is a point.
\end{proof}

We highlight an immediate consequence of the previous result in a more specific setting --- namely, when $X$ is a curve and $E$ has rank two --- which will be used later in Section \ref{sec:LowDim}.

\begin{prop}
\label{prop:StabCurve}
Let $E$ be a rank-two vector bundle on a smooth projective curve $X$ such that $\dim \g(E) \le 1$. If either
\begin{itemize}
    \item [(i)] $E$ has canonical determinant and $h^0(X,E) \ge 5$, or
    \item [(ii)] $\det(E) = K_X + D$ with $D\ge 0$ of degree at least 5,
\end{itemize}
then $E$ is stable.
\end{prop}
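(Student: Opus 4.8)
The plan is to argue by contradiction, reducing stability to a statement about global sections that can be played against Proposition \ref{prop:dim_g(E)}. Set $d := \deg(\det E)$. If $E$ were not stable, there would exist a rank-one subsheaf $F \subseteq E$ with $\deg F \ge d/2$; replacing $F$ by its saturation (which does not decrease the degree and is invertible, being a saturated rank-one subsheaf of a bundle on a smooth curve) I may assume that $F = L$ is a \emph{saturated} sub-line bundle with $\deg L \ge d/2$. Since the fiber $\gamma^{-1}(L) = \gr_2(H^0(X,L))$ is contained in $\g(E)$, any such $L$ with $h^0(X,L) \ge 3$ would force $\dim \g(E) \ge 2(h^0(X,L)-2) \ge 2$, contradicting the hypothesis $\dim \g(E) \le 1$ via Proposition \ref{prop:dim_g(E)}. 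Thus the whole proof reduces to producing a destabilizing saturated sub-line bundle $L$ with $h^0(X,L) \ge 3$.

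In case (ii) I expect this to follow at once from Riemann--Roch. Writing $\det E = K_X + D$ with $\deg D \ge 5$, one has $d = 2g - 2 + \deg D$, so
\[
\deg L \ge \tfrac{d}{2} = g - 1 + \tfrac{\deg D}{2} \ge g + \tfrac{3}{2},
\]
whence $\deg L \ge g + 2$ since degrees are integers. Riemann--Roch then gives $h^0(X,L) \ge \deg L - g + 1 \ge 3$, as required.

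In case (i) the bound $\deg L \ge d/2 = g-1$ is too weak to feed directly into Riemann--Roch, and I would instead transfer sections across the extension using the canonical determinant. Let $M := \det E \otimes L^{-1}$ be the quotient line bundle in $0 \to L \to E \to M \to 0$, so that $\deg M = d - \deg L \le g - 1$; the long exact cohomology sequence gives $h^0(X,E) \le h^0(X,L) + h^0(X,M)$. Using $\det E = K_X$, Serre duality yields $h^1(X,M) = h^0(X, K_X \otimes M^{-1}) = h^0(X,L)$, and Riemann--Roch for $M$ becomes $h^0(X,M) = h^0(X,L) + (\deg M - g + 1) \le h^0(X,L)$, the final inequality because $\deg M \le g-1$. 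Combining these with $h^0(X,E) \ge 5$ gives $5 \le 2\,h^0(X,L)$, hence $h^0(X,L) \ge 3$.

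In both situations $L$ is a saturated subpencil with $h^0(X,L) \ge 3$, so $L \in \w(E)$, and the fiber $\gamma^{-1}(L) = \gr_2(H^0(X,L))$ has dimension at least two, contradicting Proposition \ref{prop:dim_g(E)}; therefore $E$ is stable. The only delicate step is the cohomological comparison in case (i): unlike case (ii), the degree estimate alone does not manufacture enough sections on $L$, so the crux is to bound $h^0(X,M)$ by $h^0(X,L)$ through Serre duality for the canonical determinant. I expect this to be the main obstacle, whereas case (ii) is a routine degree count.
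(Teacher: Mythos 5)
Your proof is correct and uses essentially the same ingredients as the paper's: Proposition \ref{prop:dim_g(E)} (equivalently, the bound $\dim\g(E)\ge 2h^0(X,L)-4$) combined with Riemann--Roch and Serre duality on the sequence $0\to L\to E\to \det(E)\otimes L^{-1}\to 0$. The only difference is organizational --- you run the contrapositive, deducing $h^0(X,L)\ge 3$ from a destabilizing $\deg L\ge \mu(E)$, whereas the paper deduces $\deg A<\mu(E)$ from $h^0(X,A)\le 2$ --- so the two arguments are the same inequality read in opposite directions.
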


\begin{proof}
(i) Put $h^0(X,E) = n$. By Proposition \ref{prop:dim_g(E)}, it follows that $h^0(X,L) = 2$ for all saturated subpencils of $E$. In particular, for any exact sequence of type
\[
0\to A\to E\to K_X(-A)\to 0
\]
with $A \in \Pic(X)$ we have $h^0(X,A) \le 2$, and hence $h^0(X,K_X(-A)) \ge n-2$. By Riemann-Roch, we obtain that
\[
\deg(A)\le (g-1)-n+4,
\]
and the latter is strictly less than $\mu(E) = g - 1$, by hypothesis.

(ii) The proof is similar to (i).
\end{proof} 

We will explore the curve case in greater detail in the next section.

\section{Vector bundles on curves}
\label{sec:Curves}

\subsection{Resonance and Brill-Noether loci}
Throughout this section, we assume that $X$ is a smooth projective curve, $E$ is a vector bundle on $X$ of rank at least two. We fix $H$ the ample line bundle associated to a fixed point on $X$. We use the simplified notation $\operatorname{Quot}^{e}_{E^\vee/X}$ for the Quot scheme of rank-one quotients of degree $e$ of $E^\vee$, so that 
\[
\operatorname{Quot}_{E^\vee/X}=\textstyle{\coprod_e} \operatorname{Quot}^{e}_{E^\vee/X}
\]
scheme-theoretically. We also denote by $\w_d(E)$ the locus in $\w(E)$ of sub-line bundles of $E$ of degree $d$, and hence, we obtain a decomposition
\begin{equation}
\label{eqn:DegDecomp}
    \w(E)=\textstyle{\coprod_{d \ge 1}} \w_d(E)
\end{equation}
of algebraic varieties that is connected directly to the decomposition (\ref{eqn:W(E)_strata}) using the identification $L^\vee = L^\sigma$ for any $L\in\w(E)$ (see the second line of the proof of Definition--Lemma \ref{defn-lem_w(E)}, for the definition of $L^\sigma$), which implies that
\[
\w_d(E) = \w^\Phi(E),\mbox{ for the Hilbert polynomial }\Phi(t) = t - (d+g-1).
\]
%\todo{De vazut in lucrarea lui Oxbury, "Varieties of maximal line subbundles", demonstratia Prop. 1.4 ne da o idee pentru verificarea netezimii locurilor $\w_d(E)$}

With the notation from the previous section, if $\r_d(E)$ denotes the inverse image of $\w_d(E)$ via $\rho$ and $\g_d(E)$ denotes the inverse image of $\w_d(E)$ via $\gamma$, i.e.
\[
\r_d(E) = \{[a]\in\r(E):\ \mathrm{deg}(L_a)=d\}\mbox{ and }
\g_d(E) = \{\Lambda\in\g(E):\ \mathrm{deg}(L_\Lambda)=d\},
\]
we immediately notice that
% since on the curve $X$ we have $L^\vee = L^\sigma$ for any $L\in\w(E)$, 
these subsets define the strata in the corresponding flattening stratifications from the proof of Theorem \ref{thm:Flattening-Stratification-Square}. More precisely, we have the identifications
\[
\r_d(E) = \r^\varphi(E)\mbox{ with } \varphi (t) = d\mbox{ for all }t,
\]
and, respectively
\[
\g_d(E) = \g^\psi(E)\mbox{ for the Hilbert polynomial } \psi(t) = t + (d+1-g).
\]
By (\ref{eqn:Strata_Reverse}), the strata corresponding to the maximal value of $d$ such that $\w_d(E)\ne\emptyset$ are automatically closed. Note also that if $E$ is a general vector bundle of rank $r$ and 
\[
\left[\frac{\deg(E)+g-1}{r}\right]\ge 2g,
\]
then, for the maximal value of $d$ such that $\w_d(E)\ne\emptyset$, the locus $\w_d(E)$ coincides with the variety of maximal sub-line bundles $M(E)$ that is related to Segre invariants and higher-rank Brill-Noether theory, see for example \cite{oxbury_2000}. Indeed, in this case, any $L$ in $M(E)$ is of degree at least $g+1$ and hence $h^0(X,L)\ge 2$.

\begin{rem}
\label{rem:DegreeStratification} 
The flattening stratification of the resonance is directly connected to the \emph{degree stratification} introduced in \cite[Section 6]{aprodu_koszul_2024} which has a role in identifying rank-two split bundles on curves. Recall that, for any $k$, we have the following definition
\[
\r_{\ge k}(E):=\{[a]\in \r(E)|\ \mathrm{deg}(L_a)\ge k\}.
\]
Then
\[
\r_{\ge k}(E)=\textstyle{\bigcup}_{d\ge k} \r_{d}(E)
\]
is the flattening stratification of $\r_{\ge k}(E)$, see also \cite[Theorem 1.5]{aprodu_reduced_2024}. 
\end{rem}

In \cite{aprodu_reduced_2024}, the following loci 
\[
W^1_d(E)=\{A\in W^1_d(X) :  h^0(E(-A)) \ne 0 \}
\]
are analyzed in relation with the resonance. They are natural closed subschemes of $W^1_d(X)$, canonically related to $\w(E)$. Using the universal property of the Jacobian and the identification of $\w_d(E)$ with certain sub-line bundles of $E$, there is a natural map
\[
\w_d(E) \longrightarrow W^1_d(E)
\]
whose fiber over a point $A \in W^1_d(E)$ identifies with an open subset in $\p(H^0(E(-A)))$. Note that, since there are only finitely many values of $d$ for which $W^1_d(E)$ is non-empty, the disjoint union (\ref{eqn:DegDecomp}) is finite --- a fact that was already known from the previous section.

As established earlier, the diagram of \emph{sets} (\ref{diagrama_rezonanta_multimi}) is stratified by diagrams of \emph{algebraic varieties} of type

\begin{align} \label{diagrama_rezonanta_d}
\begin{tikzcd}[ampersand replacement = \&, column sep = small]
        \Xi_d(E) \arrow[rr, "p_2"] \arrow[d, swap, "p_1"] \& {}  \& \g_d(E) \arrow[d, "\gamma_d"] \\
        \r_d(E) \arrow[rr, "\rho_d"]  \& {}  \& \w_d(E) \mathrlap{\longrightarrow W^1_d(E)}
\end{tikzcd}
\end{align}
for all $d$, where $\Xi_d(E)=p_2^{-1}(\g_d(E))$.

\medskip

Next, we describe the fibers of the maps $\g_d(E) \longrightarrow W^1_d(E)$ and $\r_d(E) \longrightarrow W^1_d(E)$. 
Fix a point $L \in W^1_d(E)$. Define $U_L \subseteq \p(H^0(E(-L)))$ as the open subset parametrizing sections whose associated morphisms give saturated embeddings of $L$ into $E$.
%Fix a point $L\in W^1_d(E)$, and define the open subset $U_L\subseteq \p(H^0(E(-L)))$ to be the locus that define saturated embeddings of $L$ into $E$. 
Note that the set $U_L$ is precisely the fiber over $L$ of the map $\w_d(E) \longrightarrow W^1_d(E)$.
We consider the natural maps 
\[
\theta_L:\p(H^0(L))\times \p(H^0(E(-L))) \longrightarrow \p(H^0(E))
\]
and 
\[
\tau_L:\gr_2(H^0(L))\times \p(H^0(E(-L))) \longrightarrow \gr_2(H^0(E))
\]
induced by multiplication
\[
\mu_{L,E(-L)}:H^0(L)\otimes H^0(E(-L)) \longrightarrow H^0(E).
\]
The map $\tau_L$ is given by
\[
(\Lambda, [\alpha]) \longmapsto \Lambda \cdot \alpha, 
\]
while $\theta_L$ is the projection of the Segre map centered in $\p(\ker(\mu_{L,E(-L)}))$. Put 
\[
\r_L(E):=\theta_L(\p(H^0(L))\times U_L)\mbox{ and }\g_L(E):=\tau_L(\gr_2(H^0(L))\times U_L).
\]
Note that the closure $\overline{\r_L(E)}$ of $\r_L(E)$ coincides with $\theta_L(\p(H^0(L))\times \p(H^0(E(-L))))$ and the closure $\overline{\g_L(E)}$ of $\g_L(E)$ coincides with $\tau_L(\gr_2(H^0(L))\times \p(H^0(E(-L))))$.
Note also that if $h^0(E(-L))=1$, then $\theta_L$ and $\tau_L$ are inclusions and $U_L$ is either a point or the empty set. We readily see that we have the following.

\begin{prop} 
\label{prop:ResSegre}
The fiber over $L$ of the composed morphism 
\(
\r_d(E) \longrightarrow W^1_d(E)
\)
coincides with $\r_L(E)$ and the fiber over $L$ of the morphism
\(
\g_d(E) \longrightarrow W^1_d(E)
\)
is $\g_L(E)$.

More precisely, the fiber over $L$ in $\r_d(E)$ is either:
\begin{itemize}
\item[(i)] the projective space $\p (H^0(L))$ seen as a linear subspace of $\p (H^0(E))$ if $h^0(E(-L))=1$, or
\item[(ii)] the projection in $\p(H^0(E))$ of the open subset in the Segre embedding
\[
\p(H^0(L))\times U_L\subset \p(H^0(L))\times \p(H^0(E(-L))) \hooklongrightarrow \p(H^0(L)\otimes H^0(E(-L)))
\]
via the projectivization of
\(
\mu_{L,E(-L)},
\)
if $h^0(E(-L))\ge 2$.
\end{itemize}

Similarly, the fiber over $L$ in $\g_d(E)$ is either:
\begin{itemize}
\item[(i)] the Grassmannian $\gr_2(H^0(L))$ seen as a sub-Grassmannian of $\gr_2(H^0(E))$ in the case $h^0(E(-L))=1$, or
\item[(ii)] the projection in $\gr_2(H^0(E))$ of the open subset 
\[
\gr_2(H^0(L))\times U_L\subset \gr_2(H^0(L))\times \p(H^0(E(-L)))
\]
via the map $\tau_L$ if $h^0(E(-L))\ge 2$.
\end{itemize}

In particular, if $W^1_d(E)$ is finite, then all the connected components of $\r_d(E)$ and $\g_d(E)$, respectively are of the types described above.
\end{prop}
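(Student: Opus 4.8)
The plan is to compute both fibers by factoring the two composite maps of diagram (\ref{diagrama_rezonanta_d}) through $\rho_d$ and $\gamma_d$, and then invoking the stated fact that the fiber of $\w_d(E)\to W^1_d(E)$ over $L$ is exactly $U_L$. Concretely, a point $[\alpha]\in U_L\subseteq\p(H^0(E(-L)))$ is a saturated embedding $\alpha\colon L\hooklongrightarrow E$, whose associated saturated sub-line bundle is $\alpha(L)\subseteq E$; its space of sections, viewed inside $H^0(E)$, is $\alpha(H^0(L))=\{\alpha(s):s\in H^0(L)\}$, and distinct classes $[\alpha]$ give abstractly isomorphic but genuinely different sub-line bundles (as in Example \ref{ex:O(2)O(2)}). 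I would first recall, from the fibered diagram (\ref{diagrama_fiber}), that for a \emph{single} saturated sub-line bundle $M\in\w(E)$ one has $\rho^{-1}(M)=\p(H^0(M))$ and $\gamma^{-1}(M)=\gr_2(H^0(M))$, so that the fiber of the composite $\r_d(E)\to W^1_d(E)$ over $L$ is
\[
\rho_d^{-1}(U_L)=\textstyle\bigcup_{[\alpha]\in U_L}\p\bigl(H^0(\alpha(L))\bigr),
\]
and likewise $\gamma_d^{-1}(U_L)=\bigcup_{[\alpha]\in U_L}\gr_2\bigl(H^0(\alpha(L))\bigr)$.

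Next I would identify these unions with the images of $\theta_L$ and $\tau_L$. Since $H^0(\alpha(L))=\alpha(H^0(L))$ and $\theta_L([s],[\alpha])=[\mu_{L,E(-L)}(s\otimes\alpha)]=[\alpha(s)]$, the subspace $\p(H^0(\alpha(L)))$ equals $\theta_L(\p(H^0(L))\times\{[\alpha]\})$; taking the union over $[\alpha]\in U_L$ gives precisely $\theta_L(\p(H^0(L))\times U_L)=\r_L(E)$. The identical computation with $\tau_L(\Lambda',[\alpha])=\alpha(\Lambda')$ yields $\gamma_d^{-1}(U_L)=\tau_L(\gr_2(H^0(L))\times U_L)=\g_L(E)$. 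The one point that needs care — and the only real content beyond unwinding definitions — is the saturation bookkeeping: I must check that for $[\alpha]\in U_L$ and $0\ne s\in H^0(L)$ the section $\alpha(s)$ satisfies $L_{\alpha(s)}=\alpha(L)$, and conversely that every $[a]$ in the fiber is of this form. Both follow from Proposition \ref{prop:sat}: the subsheaf of $E$ generated by $\alpha(s)$ is a rank-one subsheaf of the saturated bundle $\alpha(L)$, hence shares its saturation $\alpha(L)$; conversely, if $L_a\cong L$ then $a\in H^0(L_a)$ with $L_a$ saturated determines the inclusion $[\alpha]\in U_L$ and the section $s$ with $\alpha(s)=a$. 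This is where the reduced machinery of Propositions \ref{prop:sat}, \ref{prop:sat_section} and \ref{prop_gE} does the essential work.

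Finally I would read off the two cases from the behaviour of $\theta_L$ and $\tau_L$. When $h^0(E(-L))=1$, the space $\p(H^0(E(-L)))$ is a single point, so $U_L$ is a point (or empty), and $\theta_L$, $\tau_L$ are a linear embedding, respectively a Plücker embedding; hence the fiber is $\p(H^0(L))$ sitting linearly in $\p(H^0(E))$, respectively $\gr_2(H^0(L))$ sitting as a sub-Grassmannian of $\gr_2(H^0(E))$, which is case (i). When $h^0(E(-L))\ge 2$, the maps $\theta_L$ and $\tau_L$ are by construction the projections of the Segre embedding from $\p(\ker\mu_{L,E(-L)})$, so the fibers are exactly the projected open Segre loci described in (ii). The closing assertion is then immediate: if $W^1_d(E)$ is finite, then $\r_d(E)$ and $\g_d(E)$ are the disjoint unions of the finitely many fibers $\r_L(E)$ and $\g_L(E)$ over the points $L\in W^1_d(E)$, each of which is of type (i) or (ii), so every connected component is of one of these types. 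I expect the saturation bijection of the second paragraph to be the only genuine obstacle; everything else is bookkeeping of diagram (\ref{diagrama_fiber}) and the definitions of $\theta_L$, $\tau_L$ and $U_L$.
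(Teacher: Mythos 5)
Your proposal is correct and follows essentially the same route as the paper, which states the proposition as ``readily seen'' from the immediately preceding setup: the identification of $U_L$ as the fiber of $\w_d(E)\to W^1_d(E)$, the fiberwise description $\rho^{-1}(M)=\p(H^0(M))$, $\gamma^{-1}(M)=\gr_2(H^0(M))$ from diagram (\ref{diagrama_fiber}), and the maps $\theta_L$, $\tau_L$. Your explicit saturation check $L_{\alpha(s)}=\alpha(L)$ via Proposition \ref{prop:sat}, and the remark that finiteness of $W^1_d(E)$ makes the fibers open and closed (hence the connected components), are exactly the details the paper leaves implicit.
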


Clearly, at the points $L\in W^1_d(E)$ where $\theta_L$ is injective (for example if \(\mu_{L,E(-L)}\) is injective), the fiber of the map $\r_d(E)\to W^1_d(E)$ in fact isomorphic to the corresponding open subset in the Segre variety.

Note that, if $W^1_d(E)$ is either empty or consists of a point for any $d\ge 1$, then Proposition \ref{prop:ResSegre} gives a complete description of all strata of the resonance. This is the case for the projective line. A simple example is given by the following (see also Proposition \ref{prop_n = 4, k = 3}).

\begin{exmp}
\label{exmp:O(1)O(1)}
If $X = \p^1$ and $E=\o_{\p^1}(1)\oplus \o_{\p^1}(1)$, then $\r(E) = \r_1(E) = \p^1\times\p^1$. Indeed, $\w(E) = \w_1(E) = \p(H^0(E(-1))$ and the multiplication map $H^0(\o_{\p^1}(1))\otimes H^0(E(-1)) \longrightarrow H^0(E)$ is injective. 
\end{exmp}

In general, for any $L\in W^1_d(E)$, the connection between $\theta_L$ and $\tau_L$ is given by an incidence diagram. These two maps define a morphism 
\[
\xi_L:\Xi_L\times \p(H^0(E(-L))) \longrightarrow \Xi_E.
\]  
If we denote by $\overline{\Xi_L(E)}$ the image of $\xi_L$, then we have the following diagram
\begin{equation}
\begin{tikzcd}[row sep = 1em, column sep = 1em]
\Xi_L \times \p(H^0(E(-L))) \arrow[rr, "p_2 \times \operatorname{id}"] \arrow[dr, "\xi_L"] \arrow[dd, swap, "p_1 \times \operatorname{id}"] &&
  \gr_2(H^0(L)) \times \p(H^0(E(-L))) \arrow[dr, "\tau_L"] \\
& \overline{\Xi_L(E)} \arrow[rr, "\overline{p_2}"] &&
  \overline{\g_L(E)} \arrow[dd] \\
\p(H^0(L)) \times \p(H^0(E(-L))) \arrow[dr, swap, "\theta_L"] \\
& \overline{\r_L(E)} \arrow[rr] \arrow[uu, <-, "\overline{p_1}"] && \{L\}
\end{tikzcd}
\end{equation}

Remark that $\overline{\Xi_L(E)}$ is the incidence variety of $\overline{\r_L(E)}$ and $\overline{\g_L(E)}$. With these preparations, we prove the following.

\begin{prop}
\label{prop:gfinite}
The map $\tau_L$ is generically finite if and only if $\theta_L$ is generically finite.
\end{prop}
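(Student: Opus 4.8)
The plan is to read both finiteness statements off the incidence diagram preceding the proposition, transferring them through the diagonal map $\xi_L$ and its image $\overline{\Xi_L(E)}$, which is the incidence variety of $\overline{\r_L(E)}$ and $\overline{\g_L(E)}$. First I would analyse $\overline{p_2}\colon\overline{\Xi_L(E)}\to\overline{\g_L(E)}$. Its fibre over a point $\Lambda=\Lambda_0\cdot\alpha$ is $\{[a]\in\overline{\r_L(E)}:a\in\Lambda\}$; since $\p\Lambda=\theta_L(\p\Lambda_0\times\{[\alpha]\})$ is entirely contained in $\overline{\r_L(E)}$ and is one-dimensional, this fibre is exactly the line $\p\Lambda$, so $\overline{p_2}$ is generically a $\p^1$-bundle. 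As $p_2\times\operatorname{id}$ is also a $\p^1$-bundle (because $p_2\colon\Xi_L\to\gr_2(H^0(L))$ is) and $\overline{p_2}\circ\xi_L=\tau_L\circ(p_2\times\operatorname{id})$, comparing generic fibre dimensions shows that $\tau_L$ is generically finite if and only if $\xi_L$ is.

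Next comes the easy implication. Write $w=h^0(X,L)$. The map $p_1\times\operatorname{id}$ is a $\p^{\,w-2}$-bundle and $\overline{p_1}\circ\xi_L=\theta_L\circ(p_1\times\operatorname{id})$. The fibre of $\overline{p_1}$ over a general $[a]=[s\cdot\alpha]$ always has dimension at least $w-2$: it contains the $\p^{\,w-2}$ of two-planes $\Lambda_1\cdot\alpha$ with $s\in\Lambda_1$, which are pairwise distinct because multiplication by $\alpha$ is injective on $H^0(L)$. Computing the generic fibre dimension of the dominant composite $\overline{p_1}\circ\xi_L=\theta_L\circ(p_1\times\operatorname{id})$ in two ways — as $(w-2)+\dim(\text{generic fibre of }\theta_L)$ and as $\dim(\text{generic fibre of }\xi_L)+\dim(\text{generic fibre of }\overline{p_1})$ — and using this lower bound yields $\dim(\text{generic fibre of }\theta_L)\ge\dim(\text{generic fibre of }\xi_L)$. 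Hence if $\theta_L$ is generically finite then so is $\xi_L$, and therefore so is $\tau_L$.

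The remaining implication forces the fibres of $\overline{p_1}$ to have dimension exactly $w-2$, equivalently that the generic fibres of $\theta_L$ and $\tau_L$ have equal dimension, and this is the main obstacle. It cannot be extracted from the incidence diagram by dimension-counting alone: every such relation turns out to be tautological, and indeed for a general family of $w$-planes in $H^0(E)$ the analogous equivalence is false. The input must come from the multiplication map $\mu_{L,E(-L)}$, via an explicit description of the two fibres. The fibre of $\theta_L$ over a general $[a]$ is the set of factorisations $a=t\cdot\beta$ with $t\in H^0(L)$, which is governed by divisibility of the section $a$ of $E$ by sections of $L$ (the condition $\operatorname{div}(t)\le\operatorname{div}(a)$); the fibre of $\tau_L$ over a general $\Lambda=\langle s_1,s_2\rangle\cdot\alpha$ consists of those $\beta$ with $s_1\cdot\beta,\,s_2\cdot\beta\in H^0(L)\cdot\beta$, which forces the syzygy $s_1t_2=s_2t_1$ and so couples the two fibres. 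I would then show that a positive-dimensional family of factorisations of a general $a$ produces one for a general such $\Lambda$, and conversely, which closes the equivalence. This last step is precisely where the multiplicative — rather than merely linear-algebraic — nature of the construction is indispensable, and it is the delicate point of the argument.
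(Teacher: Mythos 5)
Your opening reduction is sound: the fibres of $\overline{p_2}$ are exactly the lines $\p\Lambda$, so $\dim\overline{\Xi_L(E)}=\dim\overline{\g_L(E)}+1$ and $\tau_L$ is generically finite if and only if $\xi_L$ is; and your fibre-dimension count for $\overline{p_1}\circ\xi_L=\theta_L\circ(p_1\times\operatorname{id})$ correctly gives the implication ``$\theta_L$ generically finite $\Rightarrow$ $\tau_L$ generically finite''. The genuine gap is the converse. Your third paragraph is an announced plan, not a proof: the sentence ``I would then show that a positive-dimensional family of factorisations of a general $a$ produces one for a general such $\Lambda$, and conversely'' is precisely the whole content of the remaining implication, and it is never carried out. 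As written, the proposal establishes only one direction of the equivalence.

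The missing input is, in fact, your own divisibility remark pushed one step further through saturation. If $a=t\cdot\beta$ with $t\in H^0(L)$ and $0\neq\beta\in H^0(E(-L))$, then $\beta$ embeds $L$ into $E$ with image containing the rank-one subsheaf generated by $a$; by Proposition \ref{prop:sat_section} all these images have the same saturation $L_a$, so every factorisation factors through the single line bundle $L_a$, i.e. $\tilde a=t\cdot\beta'$ in $H^0(L_a)$ with $\beta'\in H^0(L_a\otimes L^\vee)$, where $a=\iota(\tilde a)$. Since $\operatorname{div}(t)\le\operatorname{div}(\tilde a)$ and an effective divisor has only finitely many effective subdivisors, each determining $t$ (hence $[\beta]$) up to scalars, the fibres of $\theta_L$ are finite, and the fibre of $\overline{p_1}$ over any $[a]$ is a finite union of $(h^0(L)-2)$-planes, hence of dimension exactly $h^0(L)-2$ --- which is exactly where you stalled, having only the lower bound. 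This is also the one non-tautological assertion in the paper's proof, which is otherwise a pure dimension count: from the fibres of $\overline{p_1}$ and $\overline{p_2}$ it deduces the relation $\dim\overline{\g_L(E)}-\dim\overline{\r_L(E)}=h^0(L)-3$, and then reads off both implications simultaneously, since $\theta_L$ (resp.\ $\tau_L$) is generically finite iff $\dim\overline{\r_L(E)}$ (resp.\ $\dim\overline{\g_L(E)}$) attains the maximal value $h^0(L)+h^0(E(-L))-2$ (resp.\ $2h^0(L)+h^0(E(-L))-5$), and these maxima differ by exactly $h^0(L)-3$. So your instinct that the diagram bookkeeping alone cannot settle the converse, and that the multiplicative structure must enter, was correct --- but the multiplicative fact needed is the finiteness of factorisations just described, and supplying it would have closed your argument in a few lines rather than requiring the delicate two-way transfer of positive-dimensional families you sketched.
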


\begin{proof}
As we already noticed, the fibers of $\overline{p_1}$ have dimension $h^0(L) - 2$ and the fibers of $\overline{p_2}$ are one-dimensional. Hence, from the surjectivity of the maps $\overline{p_1}$ and $\overline{p_2}$, we obtain the following relation
\begin{equation}
\label{eqn:RL(E)GL(E)}
   \dim \overline{\g_L(E)} - \dim \overline{\r_L(E)} = h^0(L) - 3. 
\end{equation}
To conclude, note that $\theta_L$ is generically finite if and only if $\dim \overline{\r_L(E)} = h^0(L) + h^0(E(-L)) - 2$ and $\tau_L$ is generically finite if and only if $\dim \overline{\g_L(E)} = 2h^0(L) + h^0(E(-L)) - 5$.
\end{proof}

\subsection{Rank-two bundles on the projective line}
We work through some more involved examples on the projective line below. If $L = \cO_{\p^1}(d)$, we shall denote the maps $\theta_L$ and $\tau_L$ by $\theta_d$ and $\tau_d$, respectively. 

\begin{exmp}
\label{exmp:O(1)O(b)}
Choose  $E=\o_{\p^1}(1)\oplus \o_{\p^1}(b)$, with $b \ge 2$.
We observe first that any saturated sub-line bundle of $E$ is isomorphic either to $\o_{\p^1}(1)$ or to $\o_{\p^1}(b)$ and hence the flattening stratification of $\g(E)$ consists of two strata. 
Up to scalar multiplication, there is only one embedding of $\o_{\p^1}(b)$ in $E$, which is saturated. Hence, $\tau_b$ and $\theta_b$ are injective and we get a closed stratum $\g_b = \gr_2(H^0(\o_{\p^1}(b))) \cong \gr_2(b+1)$ of the $\g(E)$ and a closed stratum $\r_b=\p(H^0(\o_{\p^1}(b))) \cong \p^{b}$ of the resonance.

Now, the non-saturated embeddings of $\o_{\p^1}(1)$ are determined by a vector $(0,\alpha)$ where $\alpha\in H^0(\o_{\p^1}(b-1))$, and their saturation is $\o_{\p^1}(b)$. Since $\tau_1$ is again injective, the other stratum $\g_1$ of $\g(E)$ is the complement $U_1$ of the hyperplane $\p(H^0(\o_{\p^1}(b-1)))$ in $\p(H^0(E(-1))) \cong \p^b$.  Since the strata are disjoint, it follows that $\g(E)$ has two irreducible components, namely $\gr_2(H^0(\o_{\p^1}(b)))$ and $\p(H^0(E(-1)))$  intersecting along $\p(H^0(\o_{\p^1}(b-1)))$. The stratum $\r_1$ of $\r(E)$ is the image of $\p(H^0(\cO_{\p^1}(1))) \times U_1$ via the linear projection $\theta_1$. A simple computation shows that $\theta_1(\p(H^0(\cO_{\p^1}(1))) \times \p(H^0(\cO_{\p^1}(b-1)))) = \r_b$ and hence $\r_b \subseteq \overline{\r_1}$. Therefore, $\r(E) = \overline{\r_1}$ is isomorphic to a linear projection of $\p^1 \times \p^b$ from $\p^{2b+1}$ to $\p^{b+2}$.
\end{exmp}

\begin{exmp}
\label{exmp:O(2)O(2)}
Let $E=\o_{\p^1}(2)\oplus \o_{\p^1}(2)$. Any subpencil $L$ of $E$ is isomorphic either to $\o_{\p^1}(1)$ or $\o_{\p^1}(2)$. 
Note that if $L\cong \o_{\p^1}(2)$ then it is automatically saturated, while if $L\cong \o_{\p^1}(1)$ it is either saturated or its saturation is isomorphic to $\o_{\p^1}(2)$. Consequently, 
the flattening stratification gives exactly two strata of $\g(E)$:
\begin{itemize}
    \item[(1)] an open stratum $\g_1$ corresponding to saturated subpencils isomorphic to $\o_{\p^1}(1)$, and 
    \item[(2)] a closed stratum $\g_2$ corresponding to saturated subpencils isomorphic to $\o_{\p^1}(2)$.
\end{itemize}
The stratum $\g_i$ is mapped to $\w_i\subset \w(E)$, for $i\in\{1,2\}$. It is immediate that $\g_1 \longrightarrow \w_1$ is one-to-one and the fibers of $\g_2 \longrightarrow \w_2$ are (dual) projective planes.
By the general properties of the closures of strata in a flattening stratification, we have $\overline{\g_1}\setminus \g_1\subseteq \g_2$. Note that $\tau_1$ and $\tau_2$ are injective. Indeed, if $L\cong\o_{\p^1}(1)$, this is trivial, since $\gr_2( H^0(L))$ is a point. If $L\cong\o_{\p^1}(2)$, assume $\Lambda,\Lambda'\in \gr_2(H^0(L))$, and $(\alpha,\beta), (\alpha,\beta)\in\mathbb{C}^2$ such that $\Lambda\cdot (\alpha,\beta)=\Lambda'\cdot (\alpha',\beta')$. If $\lambda\in\Lambda$ and $\lambda'\in\Lambda'$ such that $\lambda\cdot (\alpha,\beta)=\lambda'\cdot (\alpha',\beta')$ then $\lambda,\lambda'\in H^0(L)$ are proportional. Hence $\lambda'\in \Lambda$ and $\lambda\in\Lambda'$. The description of these two strata is then as follows.
\begin{itemize}
        \item[(1)] $\g_1=\p(H^0(E(-1)))\setminus(\p(H^0(\o_{\p^1}(1))\times \p(H^0(E(-2)))\cong\p^3\setminus (\p^1\times\p^1)$, and 
        \item[(2)] $\g_2= \gr_2(H^0(\o_{\p^1}(2)))\times \p(H^0(E(-2)))\cong\p^2\times \p^1$, and
        \item[(3)] $\overline{\g_1}\cap \g_2=\p^1\times\p^1$.
\end{itemize} 

We pass now to the analysis of the resonance. Denote by $\r_i$ the corresponding strata, and $\Xi_i$ the corresponding incidence loci, for $i\in\{1,2\}$. 
Recall that $\g_1\to \w_1$ is one-to-one, and hence the same is true for the map $\Xi_1\to \r_1$. Since $\Xi_1\to \g_1$ is a $\p^1$-bundle, then, from Proposition \ref{prop:ResSegre}, it follows that $\r_1$ is the image of a linear projection of $\p^1\times (\p^3\setminus (\p^1\times\p^1))$ from $\p^7$ to $\p^5$. In particular, $\overline{\r}_1$ is a hypersurface.  To describe the other stratum $\r_2$, we look at the diagram

\begin{align}
\begin{tikzcd}[ampersand replacement = \&, column sep = small]
        \Xi_2 \arrow[rr, "p_2"] \arrow[d, swap, "p_1"] \& {}  \& \g_2 = \p^2\times\p^1 \arrow[d, "\gamma"] \\
        \r_2 \arrow[rr, "\rho"]  \& {}  \& \w_2(E) = \p^1 \mathrlap{\longrightarrow \{\o_{\p^1}(2)\}}
\end{tikzcd}
\end{align}
and restrict it over $\overline{\g}_1\cap \g_2$ to obtain the diagram
\begin{align}
\begin{tikzcd}[ampersand replacement = \&, column sep = small]
        \Xi_2|_{\overline{\g}_1\cap \g_2} \arrow[rr, "p_2"] \arrow[d, swap, "p_1"] \& {}  \& \overline{\g}_1\cap \g_2 = \p^1\times\p^1 \arrow[d, "\gamma"] \\
        \overline{\r}_1\cap\r_2 \arrow[rr, "\rho"]  \& {}  \& \w_2(E) = \p^1 \mathrlap{\longrightarrow \{\o_{\p^1}(2)\}}
\end{tikzcd}
\end{align}
We observe that the map $\Xi_2|_{\overline{\g}_1\cap \g_2} \to \overline{\r}_1\cap\r_2$ is finite, two-to-one; indeed, for any section $\alpha \in H^0(\o_{\p^1}(1))$, there are exactly two subspaces in $H^0(\o_{\p^1}(2))$ of dimension two that contain $\alpha$, namely the images of $\langle\alpha\rangle\otimes H^0(\o_{\p^1}(1))$ and, respectively $H^0(\o_{\p^1}(1))\otimes \langle\alpha\rangle$ via the multiplication map. The two diagrams above show that both $\r_2$ and $p_1(\Xi_2|_{\overline{\g}_1\cap \g_2})$ are three-dimensional. The successive inclusions
$p_1(\Xi_2|_{\overline{\g}_1\cap \g_2})\subset \overline{\r}_1\cap\r_2\subset \r_2$, together with the irreducibility of $\r_2$ show that $\r_2\subset\overline{\r}_1$.   
The conclusion is that the resonance $\r(E)\subset \p^5$ is a linear projection of $\p^1\times \p^3$ from $\p^7$ to $\p^5$.
\end{exmp}

The previous examples illustrate a pattern that extends to the general case. We now consider the vector bundle $E=\cO_{\p^1}(a) \oplus \cO_{\p^1}(b)$ with $1\le a \le b$ on the projective line. In this setting, any saturated sub-line bundle is isomorphic to $\cO_{\p^1}(d)$ where either $1\le d\le a$ or $d=b$. Accordingly, we obtain a stratum $\r_d(E)$ of the resonance and a stratum $\g_d(E)$ of $\g(E)$. As we have seen, the key is to understand the maps
\[
\theta_d:\p(H^0(\o_{\p^1}(d))) \times \p(H^0(E(-d))) \longrightarrow \p(H^0(E))
\]
and
\[
\tau_d:\gr_2(H^0(\o_{\p^1}(d)))\times \p(H^0(E(-d))) \longrightarrow \gr_2(H^0(E))
\]
whose images $\overline{\r_d(E)}$ and $\overline{\g_d(E)}$, respectively, are the closures of the corresponding strata.

\begin{lem}
\label{lem:tau_d-qfinite}
The morphism $\theta_d$ is finite. In particular, if $1 \le d \le a$, we have
\[
\dim \r_d(E) = a+b-d+1
\mbox{ and }
\dim \g_d(E) = a+b-1,
\]
and, if $a < b$, we have
\[
\dim \r_b(E) = b
\mbox{ and }
\dim \g_b(E) = 2b-2.
\]
\end{lem}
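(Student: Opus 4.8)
The plan is to prove finiteness of $\theta_d$ first, and then read off both dimension formulas as immediate consequences using Proposition \ref{prop:gfinite} and the fiber relation \eqref{eqn:RL(E)GL(E)}. Recall that $\theta_d$ is the map induced by the multiplication $\mu_{L,E(-L)}\colon H^0(\o_{\p^1}(d))\otimes H^0(E(-d))\longrightarrow H^0(E)$, realized as the projection of the Segre embedding of $\p(H^0(\o_{\p^1}(d)))\times\p(H^0(E(-d)))$ from the center $\p(\ker \mu_{L,E(-L)})$. A linear projection of a projective variety is finite precisely when the center of projection is disjoint from the variety, so the crux is to show that the Segre variety misses $\p(\ker\mu_{L,E(-L)})$; equivalently, that no nonzero \emph{decomposable} tensor $s\otimes t$ lies in $\ker\mu_{L,E(-L)}$. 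This is exactly the statement that the product of two nonzero sections of line bundles on $\p^1$ is nonzero, which holds because $\p^1$ is irreducible (integral) and line bundles are invertible, so $H^0$'s have no zero-divisors under multiplication into $H^0(E)=H^0(\o(a))\oplus H^0(\o(b))$: a decomposable tensor $s\otimes t$ with $s\in H^0(\o(d))$ and $t=(t_a,t_b)\in H^0(\o(a-d))\oplus H^0(\o(b-d))$ maps to $(st_a,st_b)$, which vanishes only if $st_a=st_b=0$, forcing $t_a=t_b=0$ since $s\ne 0$. Hence $\theta_d$ is finite (in fact injective on the Segre, so $\theta_d$ is a closed immersion).

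With finiteness in hand, I would compute $\dim\overline{\r_d(E)}$ as the dimension of the source Segre variety, namely $\dim\p(H^0(\o_{\p^1}(d)))+\dim\p(H^0(E(-d)))$. Here $h^0(\o_{\p^1}(d))=d+1$, and for $1\le d\le a$ one has $E(-d)=\o(a-d)\oplus\o(b-d)$ with both summands globally generated, so $h^0(E(-d))=(a-d+1)+(b-d+1)=a+b-2d+2$; thus $\dim\overline{\r_d(E)}=d+(a+b-2d+1)=a+b-d+1$, giving the first displayed equality. For $d=b$ (with $a<b$), $E(-b)=\o(a-b)\oplus\o_{\p^1}$ has $h^0=1$, so the second Segre factor is a point and $\dim\overline{\r_b(E)}=h^0(\o(b))-1=b$, which is the stated value of $\dim\r_b(E)$.

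The $\g_d(E)$ dimensions then follow without any further geometric input, purely from Proposition \ref{prop:gfinite} together with relation \eqref{eqn:RL(E)GL(E)}. Since $\theta_d$ is finite, hence generically finite, Proposition \ref{prop:gfinite} guarantees that $\tau_d$ is generically finite as well, so $\dim\overline{\g_d(E)}=2h^0(\o_{\p^1}(d))+h^0(E(-d))-5$. For $1\le d\le a$ this gives $2(d+1)+(a+b-2d+2)-5=a+b-1$, and for $d=b$ it gives $2(b+1)+1-5=2b-2$, matching both claimed formulas. (Alternatively one can substitute $h^0(\o_{\p^1}(d))=d+1$ directly into \eqref{eqn:RL(E)GL(E)} to get $\dim\overline{\g_d(E)}=\dim\overline{\r_d(E)}+(d+1)-3$, and add the $\r$-dimensions just computed.) I do not expect any real obstacle here: the only substantive point is the no-zero-divisors argument establishing finiteness of $\theta_d$, and everything else is a bookkeeping consequence of the already-established Propositions \ref{prop:ResSegre} and \ref{prop:gfinite}; the mild care needed is only to confirm that $\dim\r_d(E)=\dim\overline{\r_d(E)}$ and likewise for $\g$, which holds since each stratum is dense in its closure.
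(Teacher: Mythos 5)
Your argument is correct in substance and reaches the key point --- finiteness of $\theta_d$ --- by a genuinely different mechanism than the paper. The paper works with explicit binary forms: interpreting $H^0(\o_{\p^1}(m))$ as $\operatorname{Sym}^m(\c^2)$, it notes that a nonzero pair $(h_1,h_2)$ admits only finitely many factorizations $h_i = fg_i$ with $\deg f = d$, since binary forms split into linear factors; this gives quasi-finiteness of $\theta_d$ directly, and finiteness then follows from properness (with $\tau_d$ handled through Proposition \ref{prop:gfinite}, exactly as you do). You instead verify that the Segre variety is disjoint from the projection center $\p(\ker \mu_{L,E(-L)})$ --- no nonzero decomposable tensor lies in the kernel, by absence of zero divisors --- and invoke the standard fact that linear projection from a center disjoint from a closed subvariety is a finite morphism. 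Your route is arguably more robust: it uses nothing about $\p^1$ beyond integrality, so the same argument shows $\theta_L$ is finite onto its image over any integral projective variety, whereas the paper's factorization count is special to binary forms (though it yields explicit fiber cardinalities). The dimension bookkeeping is the same in both treatments, and your numbers check out, including the reduction from strata to their closures via the density statements established just before the lemma.

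One parenthetical claim in your write-up is false and should be deleted: $\theta_d$ is in general \emph{not} injective on the Segre variety, hence not a closed immersion. For instance, with $a = b = 2$ and $d = 1$, the distinct points $(x_0, (x_1, x_1))$ and $(x_1, (x_0, x_0))$ both map to $[(x_0x_1, x_0x_1)]$; this multiplicity is the same phenomenon as the two-to-one map exploited in Example \ref{exmp:O(2)O(2)}. The issue is that disjointness of the center from the Segre variety rules out positive-dimensional fibers, not multiple points in a fiber: injectivity would require the center $\p(\ker\mu_{L,E(-L)})$ to avoid the secant variety of the Segre, and indeed $x_0 \otimes (x_1,x_1) - x_1 \otimes (x_0,x_0)$ is a rank-two element of the kernel. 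Fortunately, nothing in your dimension computations uses injectivity --- finiteness suffices --- so the conclusions of the lemma are unaffected.
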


\begin{proof}
First of all, if $a < b$, then $h^0(E(-b)) = 1$ and hence $\theta_b$ is, in fact, injective. Suppose that $1 \le d \le a$. Interpreting $H^0(\o_{\p^1}(m)))$ as $\operatorname{Sym}^m(\c^2)$, the space of homogeneous polynomials of degree $m$ in two variables, the map $\theta_d$ is induced by the multiplication of polynomials:
\[
\theta_d: \p(\operatorname{Sym}^d(\c^2))\times \p(\operatorname{Sym}^{a-d}(\c^2)\oplus \operatorname{Sym}^{b-d}(\c^2)) \longrightarrow \p(\operatorname{Sym}^a(\c^2)\oplus \operatorname{Sym}^b(\c^2))
\]
\[
[(f, g_1 \oplus g_2)] \overset{\theta_d}{\longmapsto} [fg_1 \oplus fg_2].
\]

Since any homogeneous polynomial in two variables is a product of linear forms, and, given a homogeneous polynomial, there are finitely many ways of writing it as a product of homogeneous polynomials of given degrees, the fibers of $\theta_d$ are finite. By Proposition \ref{prop:gfinite}, the map $\tau_d$ is also quasi-finite. By properness, it follows that $\theta_d$ and $\tau_d$ are finite. The computation of the dimensions is straightforward.
\end{proof}

\begin{prop}
\label{prop:R(E)_ProjLine}
We have a chain of inclusions of the closures of the non-empty strata:
\[
\r_b(E) = \overline{\r_b(E)} \subset \overline{\r_a(E)} \subset \overline{\r_{a-1}(E)} \subset \ldots \subset \overline{\r_1(E)}.
\]
In particular, the resonance $\r(E)$ is the the closure of $\r_1(E)$, which is isomorphic to the image of $\p^1 \times \p^{a+b-1} \longrightarrow \p^{a+b+1}$ via the map $\theta_1$.
\end{prop}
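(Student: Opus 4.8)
The statement to prove describes the resonance of $E=\cO_{\p^1}(a)\oplus\cO_{\p^1}(b)$ with $1\le a\le b$ as a chain of nested strata, culminating in the identification $\r(E)=\overline{\r_1(E)}\cong\theta_1(\p^1\times\p^{a+b-1})$.

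\medskip

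The plan is to establish the chain of inclusions by showing that each closed stratum $\overline{\r_d(E)}$ is contained in the next one $\overline{\r_{d-1}(E)}$, working down from $d=b$ to $d=1$, and then read off the final identification from the top. First I would set up the polynomial multiplication description from Lemma~\ref{lem:tau_d-qfinite}: identifying $H^0(\cO_{\p^1}(m))$ with $\operatorname{Sym}^m(\c^2)$, a saturated sub-line bundle of degree $d$ corresponds (up to projection) to a pair $(f,g_1\oplus g_2)$ with $\deg f=d$, and the resonant point it produces is $[fg_1\oplus fg_2]$. The key algebraic observation is that the closure $\overline{\r_d(E)}$ consists exactly of those sections $[s_1\oplus s_2]\in\p(H^0(E))$ for which $s_1$ and $s_2$ share a common factor of degree at least $d$ (equivalently, $\gcd(s_1,s_2)$ has degree $\ge d$), since $\overline{\r_d(E)}=\theta_d(\p(H^0(\cO_{\p^1}(d)))\times\p(H^0(E(-d))))$ by the remark preceding Proposition~\ref{prop:ResSegre}. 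Because a common factor of degree $\ge d$ is in particular a common factor of degree $\ge d-1$, this immediately gives the inclusion $\overline{\r_d(E)}\subseteq\overline{\r_{d-1}(E)}$, which is the heart of the chain.

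\medskip

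The two endpoints need separate care. At the top, $d=1$: the generic resonant point requires only a single common linear factor, so $\overline{\r_1(E)}$ is the image $\theta_1(\p^1\times\p^{a+b-1})$, and since by Lemma~\ref{lem:tau_d-qfinite} this has dimension $a+b$ while every other stratum has strictly smaller closure, $\overline{\r_1(E)}$ is the unique top-dimensional component and hence equals $\r(E)$ by the flattening-stratification relation~(\ref{eqn:Strata_Reverse}). At the bottom, $d=b$: when $a<b$ one has $h^0(E(-b))=1$, so $\theta_b$ is injective and $\r_b(E)=\overline{\r_b(E)}\cong\p^b$ is already closed; when $a=b$ the degree-$b$ stratum is the maximal one and is closed by~(\ref{eqn:Strata_Reverse}). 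This justifies the equality $\r_b(E)=\overline{\r_b(E)}$ asserted at the left end of the chain.

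\medskip

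I expect the main obstacle to be the verification that $\overline{\r_d(E)}$ is \emph{precisely} the common-factor locus rather than merely contained in it, and correspondingly that the closures of successive strata are nested rather than equal. The inclusion $\overline{\r_d(E)}\subseteq\{s_1,s_2\text{ share a factor of degree}\ge d\}$ follows directly from the factorization $[fg_1\oplus fg_2]$; the reverse inclusion requires noting that any pair sharing a degree-$\ge d$ factor can be written in this form, which is elementary over $\p^1$ since $\gcd$ exists and polynomials factor into linear forms. The finiteness of $\theta_d$ from Lemma~\ref{lem:tau_d-qfinite} guarantees that these images are genuinely of the predicted dimensions, so the inclusions in the chain are strict at the expected places and no stratum collapses unexpectedly. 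The final identification then follows because $\theta_1$ is finite onto its image, giving $\r(E)\cong\theta_1(\p^1\times\p^{a+b-1})$ as a subvariety of $\p^{a+b+1}=\p(H^0(E))$.
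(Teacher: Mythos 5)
Your proof is correct and takes essentially the same route as the paper: both rest on identifying $\overline{\r_d(E)}$ with the locus of sections $[h_1 \oplus h_2]$ whose greatest common divisor has degree at least $d$, from which the chain of inclusions and the identification $\r(E) = \overline{\r_1(E)} = \theta_1(\p^1 \times \p^{a+b-1})$ follow at once. Your extra endpoint checks (injectivity of $\theta_b$ when $a < b$, closedness of the maximal stratum via (\ref{eqn:Strata_Reverse})) and the dimension count for strictness are only slightly more explicit versions of the paper's short case analysis on $h_1$, $h_2$ and $\deg\gcd(h_1,h_2)$.
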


\begin{proof}
Let us first note that an element $[h_1 \oplus h_2]\in \p(\operatorname{Sym}^a(\c^2)\oplus \operatorname{Sym}^b(\c^2)) = \p(H^0(E))$ belongs to the resonance if and only if there exists a homogeneous polynomial $f$ of positive degree dividing both $h_1$ and $h_2$.
If $h_1 = 0$ and $h_2 \neq 0$, $[0 \oplus h_2]$ belongs to all non-empty strata $\r_d(E)$. If $h_1 \neq 0$, and $h_2 = 0$ then $[h_1 \oplus 0]$ belongs to $\r_d(E)$ for all $1\le d\le a$. If both $h_1$ and $h_2$ are non-zero, and $r$ is the degree of their greatest common divisor, then $[h_1 \oplus h_2]$ belongs to $\overline{\r_d(E)}$ for all $1\le d\le r$ and is outside $\overline{\r_d(E)}$ if $d > r$.
\end{proof}

\begin{rem}
\label{rem:R(E)_ProjLine}
By the chain of inclusions from Proposition \ref{prop:R(E)_ProjLine} and the general properties of the flattening stratification, we obtain that $\overline{\r_d(E)} = \bigcup_{k \ge d} \r_k(E)$, for $1 \le d \le a$.
\end{rem}

\begin{prop}
\label{prop:G(E)_ProjLine}
The closures of the non-empty strata $\overline{\g_d(E)}$ for $d\in\{1,\ldots, a\} \cup \{ b \}$ are the irreducible components of $\g(E)$.
\end{prop}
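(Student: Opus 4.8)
The plan is to show that the finitely many closed irreducible sets $\overline{\g_d(E)}$, for $d\in\{1,\dots,a\}\cup\{b\}$, cover $\g(E)$ and that none of them is contained in another; this exhibits them exactly as the irreducible components. First I would record irreducibility: for each admissible $d$ every saturated sub-line bundle of degree $d$ is isomorphic to $\cO_{\p^1}(d)$, so $\overline{\g_d(E)}$ is the image of $\tau_d$ on the irreducible product $\gr_2(H^0(\cO_{\p^1}(d)))\times\p(H^0(E(-d)))$ and is therefore irreducible. Covering is immediate: since $\g(E)=\coprod_d\g_d(E)$ and $\g(E)=\g\cap\p K^\perp$ is closed in the Grassmannian, each closure $\overline{\g_d(E)}$ stays inside $\g(E)$, whence $\g(E)=\bigcup_{d}\overline{\g_d(E)}$. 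It then remains only to prove that no inclusion $\overline{\g_d(E)}\subseteq\overline{\g_{d'}(E)}$ holds for $d\neq d'$, because the irreducible components of a variety written as a finite union of irreducible closed sets are precisely the maximal members of that union.

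For the no-containment step I would combine the dimension count of Lemma~\ref{lem:tau_d-qfinite} with the direction of the flattening stratification. Translating (\ref{eqn:Strata_Reverse}) into degrees (using $P_{L^\sigma}=P_{L^\vee}$, so that the Hilbert polynomial $\psi(t)=t+d+1$ of $\mathcal{C}|_{X_\Lambda}$ increases with $d$), one gets
\[
\overline{\g_d(E)}\subseteq\textstyle{\bigcup}_{d'\ge d}\g_{d'}(E),
\]
so closures can only pick up strata of \emph{larger} degree; in particular the top-degree stratum is closed. When $a<b$ this gives $\overline{\g_b(E)}=\g_b(E)$, a closed set meeting no other stratum, so $\overline{\g_d(E)}\not\subseteq\overline{\g_b(E)}$ for $d\le a$, and the reverse inclusion $\overline{\g_b(E)}\subseteq\overline{\g_d(E)}$ is excluded by dimensions whenever $\dim\g_b(E)=2b-2>a+b-1=\dim\g_d(E)$, i.e. whenever $b\ge a+2$.

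The main obstacle is the borderline behavior when two distinct strata share the same dimension: the strata $\g_d(E)$ with $1\le d\le a$ all have dimension $a+b-1$, and moreover $\dim\g_b(E)=2b-2$ equals $a+b-1$ precisely when $b=a+1$. In each such case I would argue that a hypothetical inclusion $\overline{\g_d(E)}\subseteq\overline{\g_{d'}(E)}$ between equal-dimensional irreducible closures forces $\overline{\g_d(E)}=\overline{\g_{d'}(E)}$; but $\g_d(E)$ and $\g_{d'}(E)$ are dense open subsets of this common irreducible variety, hence meet, contradicting the disjointness of distinct strata (each stratum being nonempty). This disjoint–dense–open argument settles the equal-dimensional cases uniformly, and together with the dimension comparison and the one-sided stratification inclusion it rules out every containment, completing the identification of the $\overline{\g_d(E)}$ as the irreducible components of $\g(E)$.
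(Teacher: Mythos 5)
Your proof is correct and takes essentially the same route as the paper's: irreducibility and the dimension counts come from Lemma \ref{lem:tau_d-qfinite}, containments among the equal-dimensional closures are excluded because the strata are disjoint and open in their closures, and the stratum $\g_b(E)$ is handled by the comparison $2b-2 \ge a+b-1$. You merely make explicit two points the paper leaves implicit, namely the borderline case $b=a+1$ and the closedness of the top stratum via (\ref{eqn:Strata_Reverse}), which the paper had already recorded in Section \ref{sec:Curves}.
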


\begin{proof}
From Lemma \ref{lem:tau_d-qfinite} it follows that for all $1\le d\le a$, all $\overline{\g_d(E)}$ are irreducible of the same dimension. Since the strata are disjoint, and open in their closures, it follows that $\overline{\g_d(E)}$ are all distinct. In the case where $a < b$, to see that the closed stratum $\g_b(E)$ is not contained in the others, simply note that $\dim \g_b(E) = 2b - 2 \ge a+b-1 = \dim \g_d(E)$.
\end{proof}

We notice an interesting difference between $\r(E)$ and $\g(E)$, beyond the fact that the former is irreducible and the latter is not (except in the case where $a = b = 1$): namely, the stratum $\r_b(E)$ has the smallest dimension among the strata of $\r(E)$, whereas $\g_b(E)$ has the greatest dimension among the strata of $\g(E)$.

\section{Transversal vector bundles}
\label{sec:Transversal}
In the subsequent sections, we address Question \ref{ques:Generic}.

\subsection{General results} 
Let $E$ be a vector bundle of rank at least two on a smooth projective variety $X$ and put $V^\vee = H^0(X,E)$ and $K^\perp = \operatorname{ker}(d_2)$, as usual. 

We begin by recalling the transversality notion we are working with. Let $S_1, S_2$ be smooth subvarieties of some projective space $\p^N$. We say that $S_1$ and $S_2$ intersect \emph{transversely} (or $S_1 \cap S_2$ is a \emph{transversal intersection}) if $T_p S_1 + T_p S_2 = T_p \p^N$, for all $p \in S_1 \cap S_2$. If non-empty, the intersection $S_1 \cap S_2$ will be smooth and $\codim(S_1 \cap S_2) = \codim S_1 + \codim S_2$. The latter condition is sometimes referred to as \emph{dimensional transversality}, see \cite[Chapter 1]{eisenbud_3264_2016} for further details.

Since general linear sections of the Grassmannian are transversal, and we are asking whether these sections arise from vector bundles, it is natural to introduce the following definition.

\begin{defn} \label{def:trans}
The vector bundle $E$ is called \emph{transversal} if $\g$ and $\p K^\perp$ intersect transversely.
\end{defn}

Let us first note that, if the intersection $\g \cap \p K^\perp$ is transversal and positive-dimensional, then it is also irreducible \cite[Theorem 2.1]{fulton_connectivity_1981}. Since $\Xi(E)$ is a $\p^1$-bundle over $\g(E)$, we readily see that both $\Xi(E)$ and $\r(E)$ are irreducible, as well.

\begin{rem} \label{rem:connected_but_not_irred}
In general, if $\g \cap \p K^\perp$ is positive-dimensional, then it is connected \cite[Theorem 2.1]{fulton_connectivity_1981}, but not necessarily irreducible, as the following example shows. Intersecting $\gr_2(\mathbb{C}^4) \hooklongrightarrow \p^5$ with a general $\p^3$ we obtain a smooth quadric $\p^1 \times \p^1$. Intersecting further with a special $\p^2$, we can obtain a union of two lines. 
\end{rem}

\begin{rem}
If $E$ is transversal and $\g(E)$ is zero-dimensional, then $h^0(L) = 2$, for any $L \in \w(E)$ and $\# \w(E) = \# \g(E) = \deg (\g)$. Conversely, if $\g(E)$ is finite, consisting of precisely $\deg (\g)$ points and $\dim \p K^\perp + \dim \g = \dim \p(\bigwedge^2V^\vee)$, then $E$ is transversal.
\end{rem}

For general curves of even genus, the following holds true. 

\begin{prop}
\label{prop:general_curve_Mukai}
Let $C$ be a Brill-Noether-Petri general curve of genus $g = 2k$, with $k\ge 2$, and let $E$ be a globally generated stable rank-two vector bundle on $C$, having canonical determinant and precisely $k+2$ independent global sections. Then $E$ is transversal and $\g(E)$ is finite. More precisely, $\g(E) = W^1_{k+1}(E) = W^1_{k+1}(C)$. 

\end{prop}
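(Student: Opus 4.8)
The plan is to prove Proposition \ref{prop:general_curve_Mukai} in two stages: first establish the chain of identifications $\g(E) = W^1_{k+1}(E) = W^1_{k+1}(C)$ together with finiteness, and only afterwards deduce transversality from a dimension count. Let me set $V^\vee = H^0(C,E)$, so $\dim V = k+2$, and recall that $E$ has canonical determinant $\det E = K_C$ with $\deg E = 2k-2$.

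\medskip

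First I would analyze $\w(E)$ via the map $\w_d(E) \longrightarrow W^1_d(E)$ from Section \ref{sec:Curves}. For a saturated subpencil $L \hooklongrightarrow E$ of degree $d$, the quotient is $K_C(-L) = K_C \otimes L^\vee$, giving an exact sequence $0 \to L \to E \to K_C(-L) \to 0$. Since $E$ is stable with $\mu(E) = g-1 = 2k-1$, stability forces $\deg L < 2k-1$, i.e.\ $d \le 2k-2$. On the other hand $h^0(L) \ge 2$ means $L$ contributes a $g^1_d$, so $d \ge 2$ by Clifford-type bounds on a general curve; more precisely, the Brill-Noether-Petri generality pins down which $W^1_d(C)$ are nonempty. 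The key arithmetic step is to show that the only relevant degree is $d = k+1$: an embedding $L \hooklongrightarrow E$ with $h^0(E(-L)) \ge 1$ requires $h^0(L) \ge 2$ and $h^0(K_C(-L)) = h^0(E(-L)) \cdot(\text{something}) $ — here I would use that $H^0(E(-L)) \hooklongrightarrow H^0(K_C(-L))$ and count sections through $h^0(E) = k+2$, the exact sequence, and $h^0(L)+h^0(K_C(-L)) \ge h^0(E) = k+2$. Combining with Riemann-Roch $h^0(L) - h^1(L) = d - k + 1$ and the Petri-general vanishing of higher Brill-Noether loci, I expect the sections to split evenly, forcing $d = k+1$ and $h^0(L) = 2$, $h^0(E(-L)) = 1$. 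This last equality is exactly the condition that $\gamma$ is injective and that $\g(E)$ maps bijectively to $W^1_{k+1}(E)$, and the bijection with the full $W^1_{k+1}(C)$ comes from the Mukai-style reconstruction: every $A \in W^1_{k+1}(C)$ lies in $W^1_{k+1}(E)$ because $h^0(E(-A)) = h^0(K_C \otimes A^\vee \otimes ?) \ne 0$ by the evenness $\deg A = k+1$ paired against canonical determinant. Finiteness of $\g(E)$ then follows since $W^1_{k+1}(C)$ has expected Brill-Noether dimension $\rho = g - (r+1)(g-d+r) = 2k - 2\cdot(2k - (k+1) + 1) = 2k - 2k = 0$ on a Petri-general curve, hence is a reduced finite set.

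\medskip

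Having established that $\g(E)$ is finite and equal to $W^1_{k+1}(C)$, I would compute its cardinality by the Castelnuovo number and confront it with $\deg \g$. The Grassmannian $\g = \gr_2((\mathbb{C}^{k+2})^\vee) = \gr_2(\mathbb{C}^{k+2})$ sits in $\p(\bigwedge^2 \mathbb{C}^{k+2}) = \p^{N}$ with $N = \binom{k+2}{2} - 1$, and $\p K^\perp$ has dimension equal to $\dim K^\perp - 1 = \dim\ker(d_2) - 1$. For the transversality conclusion I would invoke the second Remark after Definition \ref{def:trans}: it suffices to check that $\g(E)$ consists of exactly $\deg(\g)$ reduced points and that $\dim \p K^\perp + \dim \g = N$, i.e.\ that $\p K^\perp$ cuts $\g$ in the expected dimension zero with the expected number of points. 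The degree of $\gr_2(\mathbb{C}^{k+2})$ is the Catalan number $\frac{1}{k}\binom{2k-2}{k-1}$, and the number of $g^1_{k+1}$'s on a general curve of genus $2k$ is the Castelnuovo number $g!\,\prod_{i=0}^{r}\frac{i!}{(g-d+r+i)!}$ specialized to $r=1,\ d=k+1$; I would verify these two numbers coincide, which is precisely the classical combinatorial identity underlying Mukai's picture.

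\medskip

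The hard part will be the middle stage: rigorously pinning $d = k+1$ and $h^0(E(-L)) = 1$ for \emph{every} saturated subpencil, ruling out stray contributions from other degrees and from subpencils with $h^0 \ge 3$ (which would make $\g(E)$ positive-dimensional). This is where stability, canonical determinant, and Petri-generality must be used together — stability bounds the degree from above, the self-duality $L \mapsto K_C(-L) = \det E \otimes L^\vee$ forces a symmetry $d \leftrightarrow g-1-d$ on the relevant degrees, and Brill-Noether-Petri generality guarantees the intermediate loci $W^1_d(C)$ either vanish or have dimension zero. I anticipate the cleanest route is to show $h^0(E) = h^0(L) + h^0(E(-L))$ holds with equality (so the sequence is exact on global sections), which by $h^0(E) = k+2$ and the constraint $h^0(E(-L)) \le h^0(K_C(-L))$ channels everything into the balanced case; the global-sections exactness is where I would spend the most care, likely deducing it from stability via the vanishing of a suitable connecting map.
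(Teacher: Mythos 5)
Your outline reproduces the skeleton of the paper's proof --- stability plus the section count plus Brill--Noether generality to force $d=k+1$ and $h^0(L)=2$ for every saturated subpencil $L$ (this is the content of the cited \cite[Lemma 4.8]{aprodu_koszul_2024}), Mukai's \cite[Proposition 3.1]{mukai_curves_1992} for the inclusion $W^1_{k+1}(C)\subseteq \w(E)$, and the count $\#\g(E)=\deg\g$ for transversality --- but the two load-bearing steps are left genuinely open, and the mechanism you propose for the first one would fail. The equality $h^0(E)=h^0(L)+h^0(K_C(-L))$ (your ``$h^0(E)=h^0(L)+h^0(E(-L))$'' is a slip: $k+2\neq 2+1$) does follow automatically from the numerology once $d=k+1$ and $h^0(L)=2$, but it says nothing about $h^0(E(-L))$. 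The correct mechanism, and the paper's key step, is to twist $0\to L\to E\to K_C(-L)\to 0$ by $L^\vee$, giving $h^0(E(-L))\le 1+h^0(K_C(-2L))$, and then to kill $h^0(K_C(-2L))$: the base-point-free pencil trick identifies $H^0(K_C(-2L))$ with the kernel of the Petri map $\mu_L: H^0(L)\otimes H^0(K_C(-L))\to H^0(K_C)$, whose injectivity is exactly Petri-generality. Stability is irrelevant to this vanishing; on a non-Petri curve carrying a pencil $L$ with $h^0(K_C(-2L))>0$, nothing in your ``connecting map'' strategy prevents $h^0(E(-L))\ge 2$. Likewise, your justification of $W^1_{k+1}(C)\subseteq \w(E)$ is a literal placeholder (``$h^0(K_C\otimes A^\vee\otimes\,?)$''); the actual argument behind Mukai's result is again the pencil trick, now applied to $A$ and $E$: it gives $h^0(E(-A))\ge 2h^0(E)-h^0(E\otimes A)$, and Riemann--Roch together with the self-duality $E\cong E^\vee\otimes K_C$ of a canonical-determinant bundle gives $h^0(E\otimes A)=2k+2+h^0(E(-A))$, whence $h^0(E(-A))\ge 1$. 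One should also note that such an embedding $A\hooklongrightarrow E$ is automatically saturated, since otherwise its saturation would be a saturated subpencil of degree $>k+1$, contradicting your first stage.

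Two further corrections. The degree of $\g=\gr_2(\mathbb{C}^{k+2})$ is the Catalan number $\frac{1}{k+1}\binom{2k}{k}=\frac{(2k)!}{k!(k+1)!}$, not $\frac{1}{k}\binom{2k-2}{k-1}$ (the latter is the degree of $\gr_2(\mathbb{C}^{k+1})$); with your formula the final comparison with the Castelnuovo number --- which you computed correctly --- would fail (at $k=4$ you would compare $5$ with $14$), whereas with the corrected degree the two numbers agree on the nose, which is exactly how the paper concludes. (Also $\deg E=2g-2=4k-2$, not $2k-2$, though your slope $\mu(E)=2k-1$ is right.) In addition, the transversality criterion you invoke requires $\dim\p K^\perp+\dim\g=\dim\p(\bigwedge^2V^\vee)$, i.e.\ surjectivity of $\det$, which you state as something to check but never verify; it does follow a posteriori, because otherwise the projective dimension theorem would force the non-empty intersection $\g\cap\p K^\perp$ to be positive-dimensional, contradicting the finiteness already established --- but this needs saying. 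Your arithmetic in the first stage (the stability bound $d\le 2k-2$, the count $2h^0(L)\ge d-k+3$, and non-emptiness of $W^{h^0(L)-1}_d(C)$) does pin down $d=k+1$ and $h^0(L)=2$, so once the two mechanisms above are supplied your plan becomes the paper's proof.
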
 

\begin{proof}
A verbatim copy of the proof of \cite[Lemma 4.8]{aprodu_koszul_2024} shows that any $A\in\w(E)$ is a $g^1_{k+1}$. By the definition of the Brill-Nother-Petri generality, the Petri maps 
\[
\mu_A:H^0(C,A)\otimes H^0(C,K_C(-A))\longrightarrow H^0(C,K_C)
\]
are all injective. In particular, by the base-point-free pencil trick, we obtain $h^0(K_C(-2A)) = 0$ for all $A\in W^1_{k+1}(C)$, which implies that $h^0(E(-A)) = 1$ for all $A\in\w(E)$. In particular $\g(E)$ is in bijection to $\w(E)$, by Proposition \ref{prop:dim_g(E)}. From \cite[Proposition 3.1]{mukai_curves_1992}, it follows that any $g^1_{k+1}$ is a subpencil of $E$, and hence $\w(E) = W^1_{k+1}(C)$. Since $C$ is general, $W^1_{k+1}(C)$ consists of 
\(
\frac{(2k)!}{k!(k+1)!}
\) 
distinct points, and hence $\g(E)$ consists of $\deg(\g)$ distinct points. Therefore, $E$ is transversal.

\end{proof}

One typical case when the previous proposition applies is for the restriction of the Lazarsfeld-Mukai bundle over a smooth curve that generates its Picard group, see \cite[Section 4]{aprodu_koszul_2024}.

\medskip

We now state the main result of this section.

\begin{thm} \label{thm:TransvConseq}
If $E$ is a transversal vector bundle, then:
\begin{itemize}
\item[(i)] The map $p_1: \Xi(E) \longrightarrow \p V^\vee$ is smooth at any point $([a], \Lambda)$ such that $L_\Lambda$ is a minimum for the function $\w(E)\ni L\longmapsto h^0(L)\in \z$. In particular, if $L\longmapsto h^0(L)$ is a constant function of value $c$, then $\r(E)$ is smooth of dimension $\dim \g(E) - c + 3$.

\item[(ii)] The map $p_1: \Xi(E) \to \p V^\vee$ is an immersion at any point $([a], \Lambda)$ such that $h^0(L_\Lambda)=2$. In particular, if $p_1$ is an injective map, then it defines an isomorphism between $\Xi(E)$ and $\r(E)$, and hence $\r(E)$ is isomorphic to the $\p^1$-bundle over $\g(E)$ induced by the universal rank-two bundle $\cU$.

\end{itemize}
\end{thm}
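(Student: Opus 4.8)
The plan is to reduce both parts to a single computation of the differential of $p_1$ at a point $([a],\Lambda)\in\Xi(E)$, and to express its kernel and rank through $h^0(L_\Lambda)$. Transversality guarantees that $\g(E)=\g\cap\p K^\perp$ is smooth, so the $\p^1$-bundle $p_2:\Xi(E)\to\g(E)$ is smooth and $\dim T_{([a],\Lambda)}\Xi(E)=\dim\g(E)+1$. Writing $\Lambda=\langle a,b\rangle$ and using the Plücker differential, the tangent space is
\[
T_\Lambda\g(E)=\{\phi\in\operatorname{Hom}(\Lambda,V^\vee/\Lambda):\ \widetilde{\phi(a)}\wedge b+a\wedge\widetilde{\phi(b)}\in K^\perp\},
\]
where the tildes are arbitrary lifts to $V^\vee$; the condition is lift-independent because $\Lambda\wedge b,\,a\wedge\Lambda\subseteq\langle a\wedge b\rangle\subseteq K^\perp$. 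A tangent vector to $\Xi(E)$ is a pair $(\phi,\dot a)$ with $\phi\in T_\Lambda\g(E)$ subject to the infinitesimal incidence $\dot a\equiv\phi(a)\pmod\Lambda$, and $dp_1$ sends it to $\dot a\bmod\langle a\rangle\in V^\vee/\langle a\rangle\cong T_{[a]}\p V^\vee$.

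The computation rests on the identification
\[
\{w\in V^\vee:\ a\wedge w\in K^\perp\}=H^0(X,L_\Lambda).
\]
For $\supseteq$, any two sections of a saturated sub-line bundle $L$ wedge into $H^0(X,\bigwedge^2 L)=0$, hence $\bigwedge^2 H^0(X,L)\subseteq\ker(d_2)=K^\perp$; applying this to $L_\Lambda$ and $a\in\Lambda\subseteq H^0(X,L_\Lambda)$ gives the claim. For $\subseteq$, a nonzero $a\wedge w\in K^\perp$ yields $\langle a,w\rangle\in\g(E)$, which shares the section $a$ with $\Lambda$, so Proposition~\ref{prop_gE} forces $L_{\langle a,w\rangle}=L_\Lambda$ and $w\in H^0(X,L_\Lambda)$. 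Granting this, a kernel vector of $dp_1$ has $\dot a\in\langle a\rangle$, whence $\phi(a)=0$ and the defining relation collapses to $a\wedge\widetilde{\phi(b)}\in K^\perp$, i.e. $\phi(b)\in H^0(X,L_\Lambda)/\Lambda$. This gives a canonical isomorphism $\ker(dp_1)\cong H^0(X,L_\Lambda)/\Lambda$, so that
\[
\dim\ker(dp_1)=h^0(X,L_\Lambda)-2,\qquad\operatorname{rank}(dp_1)=\dim\g(E)+3-h^0(X,L_\Lambda).
\]

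Part~(ii) follows at once: $dp_1$ is injective precisely when $h^0(L_\Lambda)=2$, so $p_1$ is an immersion at such points. If $p_1$ is globally injective, then $h^0(L_\Lambda)=2$ for every $\Lambda$, since a subpencil with $h^0\ge 3$ would give a positive-dimensional family of planes $\langle a,w\rangle\subseteq H^0(L_\Lambda)$ through a fixed $a$, all lying in $\g(E)$ and mapped by $p_1$ to the single point $[a]$. Thus $p_1$ is an injective immersion out of the projective (hence proper) variety $\Xi(E)$; being proper, injective, and an immersion, it is a closed immersion, identifying $\r(E)$ with $\Xi(E)$, i.e. with the $\p^1$-bundle $\p(\cU|_{\g(E)})$ over $\g(E)$.

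For part~(i) the rank formula shows that $\operatorname{rank}(dp_1)$ is largest exactly where $h^0(L_\Lambda)$ is smallest over $\w(E)$. As $p_1$ is dominant onto $\r(E)$ (transversality makes $\g(E)$, hence $\Xi(E)$ and $\r(E)$, irreducible in the positive-dimensional case, and one argues componentwise otherwise), the maximal value of $\operatorname{rank}(dp_1)$ equals $\dim\r(E)$ and is attained on a dense open subset $U\subseteq\Xi(E)$; by the constant-rank theorem $p_1|_U$ is a submersion onto a smooth subvariety, so $p_1$ is smooth and $\r(E)$ is smooth at every image point of $U$. A point with $L_\Lambda$ minimising $h^0$ lies in $U$, which gives the first assertion; when $h^0(\cdot)\equiv c$ one has $U=\Xi(E)$, so $\r(E)$ is smooth everywhere of dimension $\dim\g(E)+3-c$, consistent with the fibres of $\gamma$ and $\rho$ having dimensions $2c-4$ and $c-1$. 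The main obstacle is the first paragraph: one must justify that the displayed equations really describe the honest tangent space $T_\Lambda\g(E)$ (this is exactly where transversality enters) and pin down the infinitesimal incidence relation; a secondary subtlety in part~(i) is the passage from pointwise maximality of the rank to genuine smoothness of $p_1$ and of its image, which I would handle by upper semicontinuity of the rank together with the constant-rank theorem rather than by a bare tangent-space dimension count.
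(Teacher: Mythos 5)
Your proposal is correct and takes essentially the same route as the paper's proof: transversality is used exactly to identify $T_\Lambda\g(E)=T_\Lambda\g\cap T_\Lambda\p K^\perp$, the kernel of $dp_1$ at $([a],\Lambda)$ is identified with $H^0(X,L_\Lambda)/\Lambda$ via Proposition~\ref{prop_gE}, part~(i) follows from local constancy of the rank near points where $h^0(L_\Lambda)$ is minimal, and part~(ii) from vanishing of the kernel together with the injective-immersion criterion for projective varieties \cite[Theorem~14.9]{harris_first_course_1992}. Your two supplementary checks --- the two-sided verification of $\{w\in V^\vee:\ a\wedge w\in K^\perp\}=H^0(X,L_\Lambda)$ and the observation that global injectivity of $p_1$ forces $h^0(L_\Lambda)=2$ for every $\Lambda$ --- are points the paper leaves implicit, and are welcome.
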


\begin{proof}
(i) Let $([a], \Lambda)$ be as in the hypothesis. We will show that the kernel of the differential $dp_1$ has the same dimension in the neighborhood $U = p_2^{-1}(V)$ of $([a], \Lambda)$, where $V$ is formed by those $\Lambda'$ such that $h^0(L_{\Lambda'}) = h^0(L_\Lambda)$. Denote by $\ell_a$ the line determined by $a$ in $V^\vee$. By the transversality hypothesis, the tangent space $T_\Lambda \g(E)$ is just the intersection between $T_\Lambda \g$ and $T_\Lambda \p K^\perp$ inside $T_\Lambda \p (\bigwedge^2 V^\vee)$. The tangent space of the incidence variety $\Xi(E)$ at the point $([a], \Lambda)$ is formed by the pairs of linear maps $(\eta, \varphi)$ making a commutative triangle
\begin{equation*}
\begin{tikzcd}
    \ell_a \arrow[rr, hook] \arrow[dr, swap, "\eta"] && \Lambda \arrow[dl, "\varphi"] \\
    & V^\vee
\end{tikzcd}
\end{equation*}
with $\operatorname{Im}(\varphi) \cap \Lambda = \{0 \}$ such that $\varphi$ is also a tangent vector to $\p K^\perp$ at $\Lambda$, see \cite[p.~205]{harris_first_course_1992}. In coordinates, if $\Lambda = \langle a, b \rangle$, the last condition is equivalent to
\[
a \wedge \varphi(a) + b \wedge \varphi(b) \in K^\perp
\]
The differential $dp_1: T_{([a], \Lambda)} \Xi(E) \to T_{[a]} \p V^\vee$ is given by $(\eta, \varphi) \longmapsto \eta$. If $\eta = 0$, that is $\varphi|_{\ell_a} = 0$, then $b \wedge \varphi(b)\in K^\perp$. By Proposition \ref{prop_gE}, $\varphi(b) \in H^0(X,L_\Lambda)$ modulo $\Lambda$, so the kernel of $dp_1$ at $([a], \Lambda)$ has dimension $c-2$. This property holds true for all the points of $U$, as desired. 

(ii) The first part is derived from (i), while the rest follows from \cite[Theorem 14.9]{harris_first_course_1992}.
\end{proof}

For the projective line, one can classify all the transversal ample bundles of rank two.

\begin{prop}
\label{prop:p1transv}
Let $E=\o_{\p^1}(a)\oplus\o_{\p^1}(b)$ with $a,b\ge 1$ be a rank-two vector bundle on $\p^1$. Then $\gr_2(H^0(E))\cap \p K^\perp$ is transversal if and only if $a=b=1$.
\end{prop}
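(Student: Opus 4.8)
```latex
The plan is to test transversality directly by computing, at a point $\Lambda \in \mathbb{G}(E) = \mathbb{G} \cap \mathbb{P}K^\perp$, whether $T_\Lambda \mathbb{G} + T_\Lambda \mathbb{P}K^\perp$ fills up $T_\Lambda \mathbb{P}(\bigwedge^2 V^\vee)$, equivalently whether $\dim \mathbb{G}(E) = \dim \mathbb{G} + \dim \mathbb{P}K^\perp - \dim \mathbb{P}(\bigwedge^2 V^\vee)$ holds with the intersection being smooth of that expected dimension. Since dimensional transversality is necessary, I would first rule out $a = b = 1$ as the only candidate by a dimension count, and then verify transversality genuinely holds in that case. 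For $E = \mathcal{O}_{\mathbb{P}^1}(a) \oplus \mathcal{O}_{\mathbb{P}^1}(b)$ with $1 \le a \le b$, the structure of $\mathbb{G}(E)$ is completely described by Proposition \ref{prop:G(E)_ProjLine}: its irreducible components are the closures $\overline{\mathbb{G}_d(E)}$ for $d \in \{1, \dots, a\} \cup \{b\}$, with dimensions computed in Lemma \ref{lem:tau_d-qfinite}.

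The key observation is that transversality forces $\mathbb{G}(E)$ to be \emph{equidimensional} of the expected codimension, whereas Proposition \ref{prop:G(E)_ProjLine} and Lemma \ref{lem:tau_d-qfinite} show $\mathbb{G}(E)$ has several components of differing dimensions as soon as $(a,b) \ne (1,1)$. First I would record the expected dimension: with $n = h^0(E) = a + b + 2$, we have $\dim \mathbb{G} = 2(n-2) = 2(a+b)$ and $\dim \mathbb{P}(\bigwedge^2 V^\vee) = \binom{n}{2} - 1$, while $\dim \mathbb{P}K^\perp = \dim \ker(\det) - 1$, where $\det : \bigwedge^2 H^0(E) \to H^0(\det E) = H^0(\mathcal{O}_{\mathbb{P}^1}(a+b))$ has image of dimension $a+b+1$. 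Thus the expected dimension of a transversal intersection is a single well-defined number, and transversality would require every irreducible component of $\mathbb{G}(E)$ to have exactly this dimension. But by Lemma \ref{lem:tau_d-qfinite}, $\dim \overline{\mathbb{G}_d(E)} = a + b - 1$ for $1 \le d \le a$ while $\dim \overline{\mathbb{G}_b(E)} = 2b - 2$ when $a < b$; these disagree unless $b = a+1$ forces $2b-2 = a+b-1$, which collapses only in degenerate low cases. Even when $a = b$ and the closed stratum has a different fiber structure, the two strata $\mathbb{G}_1$ and $\mathbb{G}_2$ analyzed in Example \ref{exmp:O(2)O(2)} exhibit incompatible dimensions, obstructing transversality.

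Concretely, the argument splits into two cases. When $a < b$, the existence of two components of dimensions $a+b-1$ and $2b-2$ (these being equal only if $a = b$, excluded here) immediately violates equidimensionality, so $E$ is not transversal. When $a = b \ge 2$, I would invoke the detailed stratification from Example \ref{exmp:O(2)O(2)} (and its evident generalization to $\mathcal{O}(a) \oplus \mathcal{O}(a)$): the saturated subpencils of degree $a$ form a stratum $\mathbb{G}_a$ whose fibers over $\mathbb{W}_a(E)$ are positive-dimensional Grassmannians $\gr_2(H^0(\mathcal{O}_{\mathbb{P}^1}(a)))$, so $h^0(L)$ is non-constant on $\mathbb{W}(E)$, and the resulting strata again have mismatched dimensions. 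For the remaining case $a = b = 1$, the whole of $\mathbb{G}(E)$ is a single stratum: by Example \ref{exmp:O(1)O(1)}, $\mathbb{W}(E) = \mathbb{W}_1(E) = \mathbb{P}(H^0(E(-1)))$ with the multiplication map injective, so $h^0(L) \equiv 2$ is constant and $\mathbb{G}(E) \cong \gr_2(H^0(\mathcal{O}_{\mathbb{P}^1}(1))) \times \mathbb{P}(H^0(E(-1))) = \mathbb{P}^1$, which is smooth, irreducible, and of the expected dimension; one then checks directly that $T_\Lambda \mathbb{G}$ and $T_\Lambda \mathbb{P}K^\perp$ span, completing transversality.

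The main obstacle I anticipate is the positive verification of transversality in the case $a = b = 1$: showing non-equidimensionality for $(a,b) \ne (1,1)$ is essentially a bookkeeping consequence of results already proved, but confirming the \emph{transversal} (not merely dimensional) intersection when $a = b = 1$ requires a genuine tangent-space computation. Here I would use the incidence description of tangent vectors from the proof of Theorem \ref{thm:TransvConseq}: at $\Lambda = \langle s, t\rangle$ a tangent vector to $\mathbb{P}K^\perp$ is a map $\varphi : \Lambda \to V^\vee$ with $s \wedge \varphi(s) + t \wedge \varphi(t) \in K^\perp$, and I would verify that the space of such $\varphi$ together with $T_\Lambda \mathbb{G}$ exhausts the full tangent space of the Plücker ambient space by a direct rank count, using that $\det$ is surjective onto $H^0(\mathcal{O}_{\mathbb{P}^1}(2))$ so that $K^\perp$ has the minimal possible codimension. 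This is the delicate step, but it is tractable because all spaces involved are small and explicit.
```
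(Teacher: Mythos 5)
Your ``only if'' direction has a genuine gap: the equidimensionality test you rely on is vacuous on an infinite family of cases, and the two places where you wave this away are factually incorrect. Carry out your own dimension count: with $n=h^0(E)=a+b+2$, the determinant map is surjective (its middle component is the multiplication $H^0(\o_{\p^1}(a))\otimes H^0(\o_{\p^1}(b))\to H^0(\o_{\p^1}(a+b))$), so $\dim\p K^\perp=\binom{n}{2}-n$ and the expected dimension of a transversal intersection is $2(n-2)+\bigl(\binom{n}{2}-n\bigr)-\bigl(\binom{n}{2}-1\bigr)=n-3=a+b-1$. By Lemma~\ref{lem:tau_d-qfinite}, every component $\overline{\g_d(E)}$ with $1\le d\le a$ has dimension exactly $a+b-1$, i.e.\ exactly the expected dimension, while the remaining component has $\dim\g_b(E)=2b-2$, which equals $a+b-1$ precisely when $b=a+1$ (not, as you write, only when $a=b$). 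So for every $(a,b)$ with $b\in\{a,a+1\}$ and $(a,b)\neq(1,1)$ --- e.g.\ $\o_{\p^1}(2)\oplus\o_{\p^1}(2)$ or $\o_{\p^1}(1)\oplus\o_{\p^1}(2)$ --- the variety $\g(E)$ \emph{is} equidimensional of the expected dimension, and your criterion detects nothing. In particular your claim that the strata in Example~\ref{exmp:O(2)O(2)} have ``incompatible dimensions'' is false: there $\overline{\g_1}\cong\p^3$ and $\g_2\cong\p^2\times\p^1$ are both three-dimensional; and $b=a+1$ is an infinite family, not a few ``degenerate low cases''.

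The paper's obstruction is reducibility, not dimension: since $\g$ and $\p K^\perp$ are smooth, a transversal intersection is smooth, and a positive-dimensional linear section of the Grassmannian is connected by \cite[Theorem~2.1]{fulton_connectivity_1981} (as recorded right after Definition~\ref{def:trans}), so a transversal positive-dimensional $\g(E)$ would be irreducible; but Proposition~\ref{prop:G(E)_ProjLine} shows $\g(E)$ has at least two irreducible components whenever $(a,b)\neq(1,1)$. (Alternatively one can exhibit singular points directly where components meet --- in Example~\ref{exmp:O(2)O(2)} one has $\overline{\g_1}\cap\g_2=\p^1\times\p^1$ --- but proving the components always meet is exactly what the connectedness theorem supplies.) Your positive direction for $a=b=1$ is also only a promissory note; beware that smooth, irreducible and of expected dimension does \emph{not} imply transversality in general, so the tangent-space computation you postpone is genuinely needed. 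The paper closes it without any rank count: $\g(E)$ is the conic $\{[\alpha^2:\alpha\beta:\beta^2]\}$ in $\p K^\perp\cong\p^2$, and since $\g=\gr_2(\mathbb{C}^4)$ is a quadric hypersurface in $\p^5$, a plane section is transversal exactly when the conic it cuts out is smooth --- which it is here.
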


\begin{proof}
Assume $E = \o_{\p^1}(1)\oplus\o_{\p^1}(1)$, and choose a basis $\{x_0,x_1\}$ for $H^0(\o_{\p^1}(1))$. In this case, the map $\g(E) \longrightarrow \w(E)$ is one-to-one, and we have an identification $\w(E) \cong \p^1$. Any inclusion $\o_{\p^1}(1)\subset E$ is determined by a point $[\alpha:\beta]\in \p^1 \cong \p(H^0(E(-1)))$. 
Then, the corresponding element in $\p^2 \cong \p K^\perp$ is $[\alpha^2:\alpha \beta:\beta^2]$, in other words, $\g(E)\subset \p K^\perp$ is a smooth conic. This implies transversality.

For the converse, we apply Proposition \ref{prop:G(E)_ProjLine} to infer that $\g(E)$ is positive-dimensional and reducible, unless $a = b = 1$.
\end{proof}

\subsection{Transversality in families}
\label{subsec:FlatFamilies}
In this part, we discuss transversality in flat families.
We recall first from \cite[Section 4.2]{aprodu_koszul_2024} the variation of resonance. Assume $X$ is a smooth projective variety, $T$ is a finite-type connected scheme over $\mathbb{C}$ and $\cP$ is a flat family of locally free sheaves on $X\times T$ such that $h^0(X,\cP_t)$ and $h^0(X,\bigwedge^2\cP_t)$ are constant for $t\in T$. Put $\cE:=\pi_{2,*}(\cP)$ and $\cF:=\pi_{2,*}(\cP)$, where $\pi_2:X\times T\to T$ is the projection. By assumption, Grauert's Theorem implies that both $\cE$ and $\cF$ are locally free on $T$. 
Moreover, assume that the natural map
\[
\partial:\textstyle{\bigwedge^2}\cE\to \cF 
\]
is surjective and therefore $\cK^\perp:=\ker(\partial)$ is also locally free. When restricted to a fiber over $t\in T$, the map above coincides with
\[
d_2:\textstyle{\bigwedge^2}H^0(X,\cP_t)\to H^0(X,\textstyle{\bigwedge^2}\cP_t).
\]
Consider $\cG\subset \p(\bigwedge^2\cE)$, the relative Grassmannian over $T$; the fiber over a point $t\in T$ is $\gr_2(H^0(X,\cP_t))$.
The intersection $\cG\cap \p(\cK^\perp)\subset \p(\bigwedge^2\cE)$ admits a natural projection morphism to $T$, whose fiber over a point $t\in T$ is $\gr_2(H^0(X,\cP_t))\cap \p(\cK^\perp)\subset \p(H^0(X,\bigwedge^2\cP_t))$. The incidence diagram (\ref{diagrama_rezonanta}) has a relative version,  and hence we can define the relative resonance loci $\cR\subset \p(\cF)$. With these preparations, we readily prove the following.

\begin{prop}
The locus $\{t\in T|\ \cR_t\ne \emptyset\}$ is closed.
\end{prop}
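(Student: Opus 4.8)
The plan is to recognize the locus in question as the set-theoretic image of a morphism that is proper over $T$, and then to conclude via the fact that proper morphisms are closed. The relative Grassmannian section $\cG\cap\p(\cK^\perp)$ introduced above is exactly the object to exploit: it is a closed subscheme of the projective bundle $\p(\bigwedge^2\cE)$ over $T$, and its projection $q:\cG\cap\p(\cK^\perp)\to T$ has fiber $\g(\cP_t)=\gr_2(H^0(X,\cP_t))\cap\p(\cK^\perp_t)$ over $t\in T$.

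The first step is to translate nonemptiness of the resonance fiber into nonemptiness of this Grassmannian section. By the relative version of the incidence diagram, the fiber $\cR_t$ is the image, under $p_1$, of the incidence variety $\Xi(\cP_t)$, which is a $\p^1$-bundle over $\g(\cP_t)$. Since a $\p^1$-bundle is empty precisely when its base is empty, and the image of a morphism is empty precisely when its source is, we obtain
\[
\cR_t\ne\emptyset
\iff \Xi(\cP_t)\ne\emptyset
\iff \g(\cP_t)\ne\emptyset
\iff q^{-1}(t)\ne\emptyset .
\]
Consequently $\{t\in T\mid \cR_t\ne\emptyset\}=q(\cG\cap\p(\cK^\perp))$, the set-theoretic image of $q$.

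It then remains to verify that $q$ is proper. The subscheme $\cG\cap\p(\cK^\perp)$ is closed in $\p(\bigwedge^2\cE)$, being the intersection of the closed relative Grassmannian $\cG$ with the closed linear subbundle $\p(\cK^\perp)$; since $\p(\bigwedge^2\cE)\to T$ is projective, hence proper, its closed subscheme is proper over $T$, so $q$ is proper and in particular closed. Therefore $q(\cG\cap\p(\cK^\perp))$ is closed in $T$, which is the assertion. The argument is essentially formal once the setup is in place, so I do not anticipate a serious obstacle; the only point requiring genuine care is the first step, namely confirming that $q^{-1}(t)$ is literally the fiber $\g(\cP_t)$ (so that the relative construction is compatible with base change to a point) and that emptiness of $\cR_t$ is faithfully detected by emptiness of this fiber through the $\p^1$-bundle $\Xi$.
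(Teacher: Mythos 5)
Your proof is correct and is precisely the argument the paper intends (the proposition follows ``readily'' from the preparations): the nonemptiness locus of $\cR_t$ is identified, via the $\p^1$-bundle $\Xi(\cP_t)\to\g(\cP_t)$, with the set-theoretic image of the closed subscheme $\cG\cap\p(\cK^\perp)\subseteq\p(\bigwedge^2\cE)$ under the proper projection to $T$, hence is closed. You also rightly flag the only delicate point, namely that local freeness of $\cE$ and $\cK^\perp$ (via Grauert) guarantees the relative construction commutes with base change to a point, so that $q^{-1}(t)$ really is $\gr_2(H^0(X,\cP_t))\cap\p(\cK_t^\perp)$.
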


%\todo{Ce putem spune de locul $t$ pentru care rezonanta e liniara?}

The most basic example is the following.

\begin{exmp}
If $T=\gr_m(\bigwedge^2V)$ with $m = 2n-3$, then the locus above is the support of the Cayley-Chow form of the Grassmannian $\gr_2(V^\vee)\subset \p(\bigwedge^2V^\vee)$, see \cite{aprodu_koszul_2024} for an extended discussion on this case.
\end{exmp}

A relevant case of resonance variation is the following. Let $X$ be a smooth projective variety and $H$ be an ample line bundle on $X$. Consider $M\subset M_H$ an irreducible component in a moduli space of stable bundles on $X$, and assume that $h^0(E)$ and $\dim(\ker(d_2))$ are constant for bundles $E$ corresponding to points in an open subset $U\subset M$. Then the locus 
\[
\{[E]\in U|\ E \mbox{ is transversal}\}
\]
is open in $U$.
Note that there are cases where $E$ is a special point in the moduli space, where $h^0$ jumps, yet $E$ is transversal. One example is the restricted universal bundle over a one-dimensional linear section of $G(2,6)$, case discussed in detail in Section \ref{sec:LowDim}. However, the transversality of such a bundle does not immediately imply that transversality holds on an open subset in some irreducible component of the moduli space, since the constancy of $h^0$ is required for transversality to propagate.

\section{Universal rank two bundles on linear sections of Grassmannians}
\label{sec:Universal}

As usual, let $V$ be a complex vector space of dimension $n\ge 4$ and let ${\g^\perp} = \gr_2(V)$ be the Grassmannian of two-planes in $V$, embedded in $\p(\bigwedge^2V)$ via the Pl\"ucker embedding. Note that $\dim \g^\perp = 2n-4$. Let also $\cQ$ be the dual of the universal rank-two bundle on $\g^\perp$.

We shall use the following Bott vanishing type theorem on ${\g^\perp}$, see \cite[Corollary 4.1.9]{weyman_cohomology_2003}, \cite[Appendix]{voisin_greens_2002}:

\begin{thm}[Bott vanishing] \label{thm_bott_vanishing}
    For $q\ge 0$ and $j>0$
\[
H^p({\g^\perp}, \operatorname{Sym}^q \cQ(-j)) = 0 \ \text{if}\ p\neq n-2, 2n-4.
\]
Moreover,
\[
H^{n-2}({\g^\perp}, \operatorname{Sym}^q \cQ(-j)) = 0 \ \text{if}\ q + 1 < j.
\]
\end{thm}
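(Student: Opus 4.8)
The plan is to compute the cohomology of $\operatorname{Sym}^q\cQ(-j)$ directly via the Borel--Weil--Bott theorem, exploiting that $\cQ = \mathcal{S}^\vee$ is the dual of the tautological rank-two subbundle, so that every bundle in sight is $\mathrm{GL}(V)$-homogeneous on $\g^\perp = \mathrm{GL}(V)/P$. First I would record the relevant weight: under the Pl\"ucker polarization $\cO(1) = \det\cQ$ we have $\operatorname{Sym}^q\cQ(-j) = \operatorname{Sym}^q\mathcal{S}^\vee\otimes(\det\mathcal{S}^\vee)^{-j}$, which is the irreducible homogeneous bundle attached to the $P$-dominant weight
\[
\lambda = (q-j,\,-j \mid 0,\ldots,0),
\]
where the first two coordinates correspond to the rank-two factor $\cQ$ and the remaining $n-2$ zeros to the tautological quotient; the inequality $q-j\ge -j$, i.e. $q\ge 0$, is exactly $P$-dominance. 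Borel--Weil--Bott (in the form of Weyman's geometric technique, \cite{weyman_cohomology_2003}) then reduces everything to the single sequence $\lambda+\rho$ with $\rho = (n-1,n-2,\ldots,1,0)$, namely
\[
\lambda+\rho = \bigl(q-j+n-1,\; n-2-j,\; n-3,\,n-4,\,\ldots,\,1,\,0\bigr).
\]
If two of its entries coincide, all cohomology vanishes; otherwise the cohomology is concentrated in the single degree equal to the number of inversions of this sequence.

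The core of the argument is then the combinatorial analysis of $\lambda+\rho$. Writing $A := q-j+n-1$ and $B := n-2-j$, I would note that the tail $(n-3,\ldots,0)$ already realizes every value in $\{0,1,\ldots,n-3\}$. Consequently, for $1\le j\le n-2$ the entry $B = n-2-j$ lies in $\{0,\ldots,n-3\}$ and hence coincides with a tail entry, the sequence is singular, and all cohomology vanishes. This leaves the range $j \ge n-1$, where $B \le -1$ sits strictly below the tail and $A - B = q+1 \ge 1$ forces $A\neq B$. The position of $A$ relative to the tail governs the rest: if $A \in \{0,\ldots,n-3\}$, that is $j-n+1 \le q \le j-2$, the sequence is again singular; if $A \ge n-2$, that is $q \ge j-1$, then $A$ is correctly placed at the front while $B$ must cross the $n-2$ tail entries, producing exactly $n-2$ inversions; and if $A \le -1$, that is $q \le j-n$, then both $A$ and $B$ lie below the tail and each must cross all $n-2$ tail entries, producing $2(n-2)=2n-4$ inversions.

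Assembling these cases yields both assertions: nonzero cohomology occurs only in degree $n-2$ (when $q \ge j-1$) or in degree $2n-4=\dim\g^\perp$ (when $q \le j-n$), which is the first claim; and since $H^{n-2}\neq 0$ forces $q\ge j-1$, i.e. $q+1\ge j$, it vanishes whenever $q+1 < j$, which is the second claim. As a sanity check I would verify the line-bundle case $q=0$, where the analysis gives cohomology only for $j \ge n$ and only in top degree $2n-4$, in agreement with Serre duality $H^{2n-4}(\cO(-j)) \cong H^0(\cO(j-n))^\vee$ (recall $K_{\g^\perp}=\cO(-n)$); one can even match dimensions, e.g. at $j=n$ both sides are one-dimensional. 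The only genuinely delicate point is fixing the conventions consistently --- which Levi block carries $(q-j,-j)$, the orientation of $\rho$, and the identification of cohomological degree with inversion number --- so that the two distinguished degrees emerge as $n-2$ and $2n-4$; the $q=0$ computation pins these down, after which the statement is exactly the specialization of \cite[Corollary 4.1.9]{weyman_cohomology_2003} to a rank-two tautological bundle.
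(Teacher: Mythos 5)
Your argument is correct: the weight $(q-j,\,-j \mid 0,\ldots,0)$, the sequence $\lambda+\rho=(q-j+n-1,\,n-2-j,\,n-3,\ldots,1,0)$, and the inversion counts (singular for $1\le j\le n-2$ or $j-n+1\le q\le j-2$; exactly $n-2$ inversions when $q\ge j-1$; exactly $2n-4$ when $q\le j-n$) all check out and yield precisely the two stated vanishings, with the $q=0$ Serre-duality check legitimately pinning down the conventions. The paper gives no proof of this statement, quoting it from \cite[Corollary 4.1.9]{weyman_cohomology_2003} and the appendix of \cite{voisin_greens_2002}, and your Borel--Weil--Bott computation is exactly the standard derivation underlying that citation, so this is essentially the same approach.
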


With the help of this theorem, we easily infer that:

\begin{cor} \label{cor_bott_vanishing}
If $1\le i \le 2n-5$ then
\begin{itemize}
    \item [(a)] $H^i({\g^\perp}, \o_{{\g^\perp}}(1-i)) = 0$;
    \item [(b)] $H^{i-1}({\g^\perp}, \cQ(-i)) = H^i({\g^\perp}, \cQ(-i)) = 0$;
    \item [(c)] $H^{i-1}({\g^\perp}, \cQ^\vee(1-i)) = 0$.
\end{itemize}
\end{cor}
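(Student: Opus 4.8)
The plan is to deduce Corollary~\ref{cor_bott_vanishing} directly from Theorem~\ref{thm_bott_vanishing} by carefully matching the twists $(q,j)$ appearing in the Bott vanishing statement with the sheaves in parts (a)--(c). The key point is that $\operatorname{Sym}^0\cQ = \o_{\g^\perp}$ and $\operatorname{Sym}^1\cQ = \cQ$, so the three statements are really the cases $q=0$, $q=1$ of the theorem, while part (c) requires an extra dualization. Throughout I will use the dimension $\dim\g^\perp = 2n-4$, so that the two allowed nonvanishing cohomological degrees are $p=n-2$ and $p=2n-4$.

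\begin{proof}
For (a), set $q=0$ and $j=i-1$ in Theorem~\ref{thm_bott_vanishing}, so that $\operatorname{Sym}^0\cQ(-j)=\o_{\g^\perp}(1-i)$. If $2\le i\le 2n-5$ then $j=i-1>0$, so the theorem applies: $H^i(\g^\perp,\o_{\g^\perp}(1-i))=0$ unless $i\in\{n-2,2n-4\}$. Since $i\le 2n-5<2n-4$, the degree $i=2n-4$ is excluded; and when $i=n-2$ the secondary vanishing $H^{n-2}=0$ for $q+1<j$, i.e. $1<i-1$, i.e. $i>2$, handles the case. The boundary value $i=1$ gives $H^1(\g^\perp,\o_{\g^\perp})=0$, which holds since $\g^\perp$ is a rational homogeneous variety (one may also treat it separately or note $j=0$ is outside the range and invoke standard vanishing). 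The hard part is tracking these boundary indices $i=1$ and $i=n-2$, where the secondary vanishing statement of the theorem must be invoked rather than the primary one.

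For (b), set $q=1$ and $j=i$, so $\operatorname{Sym}^1\cQ(-i)=\cQ(-i)$, with $j=i>0$ for all $i$ in range. The primary vanishing gives $H^p(\g^\perp,\cQ(-i))=0$ for $p\neq n-2,2n-4$. For $H^i(\g^\perp,\cQ(-i))$: the degree $i=2n-4$ is again excluded since $i\le 2n-5$, and the case $i=n-2$ is covered by the secondary vanishing, which here reads $H^{n-2}=0$ when $q+1<j$, i.e. $2<n-2$, i.e. $n>4$ (the case $n=4$, where $n-2=2$ lies in range $1\le i\le 3$, must be checked directly). For $H^{i-1}(\g^\perp,\cQ(-i))$: I apply the primary vanishing with $p=i-1$, excluding degrees $i-1=n-2$ and $i-1=2n-4$; the former is handled by the secondary vanishing and the latter is automatically excluded since $i-1\le 2n-6$.

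For (c), I first rewrite $\cQ^\vee$ using the structure of the universal sequence: since $\cQ$ has rank two with $\det\cQ=\o_{\g^\perp}(1)$, there is a canonical isomorphism $\cQ^\vee\cong\cQ(-1)$. Hence $\cQ^\vee(1-i)\cong\cQ(-i)$, and the vanishing $H^{i-1}(\g^\perp,\cQ^\vee(1-i))=0$ is exactly the vanishing of $H^{i-1}(\g^\perp,\cQ(-i))$ already established in part (b). The main obstacle in the whole argument is precisely the bookkeeping at the critical degree $p=n-2$, where only the refined secondary vanishing $H^{n-2}(\g^\perp,\operatorname{Sym}^q\cQ(-j))=0$ for $q+1<j$ applies; verifying that the inequality $q+1<j$ holds in each instance (and dealing by hand with the finitely many small values of $n$ and $i$ where it fails) is where the genuine content lies.
\end{proof}
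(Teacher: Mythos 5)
Your route for (a) and (b) --- feeding $(q,j)=(0,i-1)$ and $(q,j)=(1,i)$ into Theorem~\ref{thm_bott_vanishing} and tracking the two critical degrees $n-2$ and $2n-4$ --- is essentially the paper's argument. Your treatment of (c) is genuinely different and, in fact, cleaner: the paper invokes the inclusion $\cQ^\vee \subset V\otimes\o_{\g^\perp}$, whereas you use the rank-two self-duality $\cQ^\vee\cong\cQ(-1)$ (valid since $\det\cQ=\o_{\g^\perp}(1)$), reducing (c) to the vanishing of $H^{i-1}(\g^\perp,\cQ(-i))$ from (b). That particular vanishing has no exceptional cases at all: if $p=i-1=n-2$ then $j=i=n-1\ge 3>2=q+1$ for every $n\ge 4$, so the secondary clause always applies. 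Your (c) is therefore complete and avoids the long exact sequence the paper's sketch would require.

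There is, however, a genuine gap at $n=4$. In (a) you assert that the secondary vanishing ``handles'' the case $i=n-2$ because $q+1<j$ reads $i>2$; but for $n=4$ the value $i=n-2=2$ lies inside the allowed range $1\le i\le 2n-5=3$, and there $q+1=1\not<1=j$, so neither clause of Theorem~\ref{thm_bott_vanishing} applies to $H^2(\g^\perp,\o_{\g^\perp}(-1))$ --- your case analysis silently skips it. The analogous case occurs in (b) for $H^2(\g^\perp,\cQ(-2))$, where $q+1=2\not<2=j$; you do flag it (``must be checked directly'') but never perform the check, and your closing sentence defers all such small cases wholesale. Both vanishings are true and close in one line by Serre duality, which is exactly how the paper disposes of the (a) instance: for $n=4$ one has $\dim\g^\perp=4$ and $\omega_{\g^\perp}=\o_{\g^\perp}(-4)$, so $H^2(\g^\perp,\o_{\g^\perp}(-1))\cong H^2(\g^\perp,\o_{\g^\perp}(-3))^\vee$, and, using your own identification $\cQ^\vee\cong\cQ(-1)$, $H^2(\g^\perp,\cQ(-2))\cong H^2(\g^\perp,\cQ^\vee(-2))^\vee\cong H^2(\g^\perp,\cQ(-3))^\vee$; both right-hand sides vanish by the secondary clause since $q+1<3=j$. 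Adding these two lines makes your proof complete (and in (b) it supplies a detail the paper's ``follows directly'' also leaves implicit).
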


\begin{proof}
For (a), recall that $H^1({\g^\perp}, \o_{\g^\perp}) = 0$ and also note that, if $n = 4$, Serre duality ensures that $H^2({\g^\perp}, \o_{\g^\perp}(-1)) = H^2({\g^\perp}, \o_{\g^\perp}(-3))$. The rest follows from Theorem \ref{thm_bott_vanishing}. Now, (b) follows directly from Theorem \ref{thm_bott_vanishing} and finally, since $\cQ^\vee$ is a subbundle of $V\otimes \cO_{\g^\perp}$, Theorem \ref{thm_bott_vanishing} ensures also (c). 
\end{proof}

\begin{lem} \label{lem_transversal_vanishing}
If $X$ is a transversal and positive-dimensional linear section of ${\g^\perp}$ and $1\le i \le \dim X - 1$, then
\begin{itemize}
    \item [(a)] $H^i(X, \o_X(1-i)) =  0$;
    \item [(b)] $H^{i-1}(X, \cQ(-i)|_X) = H^i(X, \cQ(-i)|_X) = 0$;
    \item [(c)] $H^{i-1}(X, \cQ^\vee(1-i)|_X) = 0$.
\end{itemize}
\end{lem}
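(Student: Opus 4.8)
The plan is to reduce each vanishing statement on the transversal section $X$ to the corresponding statement on the ambient Grassmannian $\g^\perp$ (Corollary \ref{cor_bott_vanishing}), by peeling off hyperplane sections one at a time and using the induced Koszul-type exact sequences. Since $X$ is a transversal linear section, it is a complete intersection of hyperplane sections of $\g^\perp$, so if $\dim X = \dim \g^\perp - c$ (where $c$ is the codimension), there is a Koszul resolution of $\o_X$ by the bundles $\bigwedge^k(\o_{\g^\perp}(-1)^{\oplus c})$. Let me sketch the inductive mechanism on the codimension $c$.

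First I would set up the induction by intersecting with a single hyperplane. Let $Y \subset \g^\perp$ be a transversal section of some dimension $d$, and let $X = Y \cap \mathbb{H}$ be a further transversal hyperplane section, so $\dim X = d - 1$. For any locally free sheaf $\cF$ on $Y$, restriction to $X$ fits into the short exact sequence
\begin{equation*}
0 \longrightarrow \cF(-1)|_Y \longrightarrow \cF|_Y \longrightarrow \cF|_X \longrightarrow 0.
\end{equation*}
Taking the associated long exact sequence in cohomology, the vanishing of $H^i(X, \cF|_X)$ will follow once we control $H^i(Y, \cF|_Y)$ and $H^{i+1}(Y, \cF(-1)|_Y)$. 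The strategy is therefore to prove, by induction on the codimension, a slightly stronger family of vanishing statements on every intermediate transversal section $Y$ of dimension $d$, valid in the range $1 \le i \le d-1$, that is stable under this restriction step. The base case $Y = \g^\perp$ (with $d = 2n-4$) is exactly Corollary \ref{cor_bott_vanishing}, whose three parts (a), (b), (c) are precisely the three statements to be propagated.

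Concretely, for part (a) I would feed $\cF = \o_Y(1-i)$ into the sequence above: from $0 \to \o_Y(-i) \to \o_Y(1-i) \to \o_X(1-i) \to 0$ the middle term has vanishing $H^i$ and the right-shifted term $\o_Y(-i)$ contributes $H^{i+1}(Y, \o_Y(-i))$, which is the ``next'' twist in the same family and so is covered by the inductive hypothesis (after reindexing, noting $-i = 1 - (i+1)$ and that the range $i+1 \le d$ on $Y$ contains the needed index since $\dim X = d-1$). Parts (b) and (c), involving $\cQ|_Y$ and $\cQ^\vee|_Y$, run through the identical bookkeeping: restricting $\cF = \cQ(-i)|_Y$ gives the sequence $0 \to \cQ(-i-1)|_Y \to \cQ(-i)|_Y \to \cQ(-i)|_X \to 0$, so the two vanishings in (b) on $X$ follow from the corresponding vanishings on $Y$ for the indices $i-1, i, i+1$, all of which lie in the valid range on $Y$. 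The only genuine care needed is the arithmetic of the index ranges at the boundary: one must check that when $i$ runs over $1 \le i \le \dim X - 1$ the shifted indices stay within $1 \le i' \le \dim Y - 1$ on the previous section, which holds because $\dim Y = \dim X + 1$.

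The main obstacle I anticipate is precisely this boundary bookkeeping at the top of the range, where restricting raises a cohomological degree by one and the inductive hypothesis on $Y$ must still apply at index $i+1$; one has to verify that the required top-degree vanishings on $Y$ are not the ones excluded in Corollary \ref{cor_bott_vanishing} (which forbids $p = n-2$ and $p = 2n-4$ on $\g^\perp$ itself). In the transversal setting the relevant ``middle'' cohomology that Bott vanishing excludes gets pushed out of the effective range $1 \le i \le \dim X - 1$ as the codimension grows, so the clean statement of the lemma is exactly what survives; making this precise — tracking that no excluded cohomology group is ever invoked — is the one step that is not purely formal and deserves explicit verification.
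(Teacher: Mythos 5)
Your proposal is correct and follows essentially the same route as the paper: induction on the codimension of the section, restricting along one transversal hyperplane at a time via the sequence $0 \to \cF(-1)|_Y \to \cF|_Y \to \cF|_X \to 0$ for $\cF \in \{\o(1-i), \cQ(-i), \cQ^\vee(1-i)\}$, with Corollary \ref{cor_bott_vanishing} as the base case; your index bookkeeping (the shift $i \mapsto i+1$ staying within $1 \le i' \le \dim Y - 1$ because $\dim Y = \dim X + 1$) is exactly the verification that makes the paper's terse induction go through. The Koszul-resolution framing in your opening paragraph is never actually used and can be dropped.
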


\begin{proof}
We shall proceed by induction on $\codim_{\g^\perp} X$. If $X$ is a hyperplane section of ${\g^\perp}$, twisting the exact sequence
\[
0 \rightarrow \o_{\g^\perp}(-1) \rightarrow \o_{\g^\perp} \rightarrow \o_X \rightarrow 0
\]
by $\o_{\g^\perp}(1-i)$, $\cQ(-i)$ and $\cQ^\vee(1-i)$ respectively, taking cohomology and using Corollary \ref{cor_bott_vanishing}, we obtain the vanishings (a), (b) and (c).

Now, assume that $X\subsetneq Y$ are transversal linear sections of ${\g^\perp}$ with $\dim X = \dim Y - 1 > 0$. Then, the desired vanishings for $X$ follow after twisting the exact sequence
\[
0 \rightarrow \o_Y(-1) \rightarrow \o_Y \rightarrow \o_X \rightarrow 0
\]
by $\o_{\g^\perp}(1-i)$, $\cQ(-i)$ and $\cQ^\vee(1-i)$ respectively, taking cohomology and applying the induction hypothesis for $Y$.
\end{proof}

\begin{thm} \label{thm_E}
Let $K\subset \bigwedge ^2V$ be a subspace such that the intersection $X=\p K\cap {\g^\perp}$ is positive-dimensional and transversal and consider $E=\cQ|_X$. Then $X$ is linearly normal in $\p K$ and we have a natural isomorphism $H^0(X,E)\cong V^\vee$ which identifies $K^\perp$ with $\ker(\operatorname{det})$. Moreover, if $\dim(X)\ge 3$, then $E$ is stable.
\end{thm}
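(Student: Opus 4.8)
The plan is to push every computation up to the ambient Grassmannian $\g^\perp=\gr_2(V)$, where all the relevant cohomology is known, and then descend to $X$ along the chain of transversal hyperplane sections $\g^\perp=Y_0\supset Y_1\supset\cdots\supset Y_c=X$ used in the proof of Lemma \ref{lem_transversal_vanishing}, controlling cohomology at each step by Corollary \ref{cor_bott_vanishing} and Lemma \ref{lem_transversal_vanishing}. On $\g^\perp$ the dualized tautological sequence gives $H^0(\g^\perp,\cQ)=V^\vee$ and $\bigwedge^2\cQ=\cO_{\g^\perp}(1)$, so that the determinant map $\bigwedge^2 V^\vee=\bigwedge^2 H^0(\g^\perp,\cQ)\to H^0(\g^\perp,\textstyle\bigwedge^2\cQ)=H^0(\g^\perp,\cO_{\g^\perp}(1))=\bigwedge^2 V^\vee$ is the canonical identity. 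Every identification over $X$ will be obtained by restricting this picture.

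First I would prove linear normality together with the computation of $H^0(X,\cO_X(1))$. For each step $Y_{j+1}\subset Y_j$ the ideal sequence twisted by $\cO(1)$ reads $0\to \cO_{Y_j}\to \cO_{Y_j}(1)\to \cO_{Y_{j+1}}(1)\to 0$; since $H^1(Y_j,\cO_{Y_j})=0$ by Lemma \ref{lem_transversal_vanishing}(a) with $i=1$, and $h^0(\cO_{Y_j})=1$ because transversal positive-dimensional sections are connected, the long exact sequence shows that the restriction $H^0(Y_j,\cO(1))\to H^0(Y_{j+1},\cO(1))$ is surjective and drops the dimension by exactly one. Iterating from $\g^\perp$ down to $X$ proves that $\bigwedge^2 V^\vee=H^0(\g^\perp,\cO(1))\to H^0(X,\cO_X(1))$ is surjective with kernel of dimension $c=\dim K^\perp$; since the linear forms of $K^\perp$ visibly vanish on $X\subset \p K$, this kernel is exactly $K^\perp$. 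Hence $H^0(X,\cO_X(1))\cong \bigwedge^2 V^\vee/K^\perp=K^\vee=H^0(\p K,\cO(1))$, which is precisely linear normality in $\p K$. Restricting the determinant identity on $\g^\perp$ then shows that $\det\colon \bigwedge^2 V^\vee\to H^0(X,\cO_X(1))=K^\vee$ is the canonical quotient map, whence $\ker(\det)=K^\perp$.

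The isomorphism $H^0(X,E)\cong V^\vee$ is obtained in the same fashion, using instead the sequences $0\to \cQ(-1)|_{Y_j}\to \cQ|_{Y_j}\to \cQ|_{Y_{j+1}}\to 0$. Here Corollary \ref{cor_bott_vanishing}(b) for $Y_0=\g^\perp$, and Lemma \ref{lem_transversal_vanishing}(b) for the $Y_j$ with $j\ge 1$, all of dimension $\ge \dim X+1\ge 2$, applied with $i=1$, give $H^0(\cQ(-1)|_{Y_j})=H^1(\cQ(-1)|_{Y_j})=0$, so each restriction $H^0(\cQ|_{Y_j})\to H^0(\cQ|_{Y_{j+1}})$ is an isomorphism. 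Composing yields $V^\vee=H^0(\g^\perp,\cQ)\cong H^0(X,E)$, compatibly with the identifications of global sections on $\g^\perp$.

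Finally, for stability I would first invoke the Grothendieck--Lefschetz theorem: since $\dim X\ge 3$ and $X$ is a smooth transversal linear section of $\g^\perp$, whose Picard group is $\z\cdot\cO_{\g^\perp}(1)$, one obtains $\Pic(X)=\z\cdot\cO_X(1)$. As $\det E=\bigwedge^2\cQ|_X=\cO_X(1)$, the slope is $\mu(E)=\tfrac12$ in units of $\cO_X(1)$, so any destabilizing saturated sub-line bundle must be of the form $\cO_X(k)$ with $k\ge 1$; tensoring its defining section in $H^0(X,E(-k))$ with a nonzero section of $\cO_X(k-1)$ produces a nonzero element of $H^0(X,E(-1))$, using that $X$ is integral (smooth and connected, hence irreducible). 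But $H^0(X,E(-1))=H^0(X,\cQ(-1)|_X)=0$ by Lemma \ref{lem_transversal_vanishing}(b) with $i=1$, so no such sub-line bundle exists and $E$ is stable. The step requiring the most care is the bookkeeping in the linear-normality argument, which must yield simultaneously the surjectivity of the restriction and the precise identification of its kernel with $K^\perp$; together with this, the Picard-group reduction is exactly where the hypothesis $\dim X\ge 3$ is genuinely needed, since without $\Pic(X)=\z$ sub-line bundles other than powers of $\cO_X(1)$ could a priori destabilize $E$.
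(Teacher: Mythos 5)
Your proposal is correct and follows essentially the same route as the paper: induction along a chain of transversal hyperplane sections controlled by Corollary \ref{cor_bott_vanishing} and Lemma \ref{lem_transversal_vanishing} (your restriction sequences $0\to\cO_{Y_j}\to\cO_{Y_j}(1)\to\cO_{Y_{j+1}}(1)\to 0$ and $0\to\cQ(-1)|_{Y_j}\to\cQ|_{Y_j}\to\cQ|_{Y_{j+1}}\to 0$ are just the untwisted form of the paper's ideal-sheaf sequences yielding $H^1(\g^\perp,\cI_X(1))=0$ and $H^0=H^1(\g^\perp,\cI_X\otimes\cQ)=0$), followed by the Lefschetz theorem for Picard groups and a slope argument. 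The only cosmetic difference is in the stability step, where you work on $E$ itself via $H^0(X,E(-1))=0$ from Lemma \ref{lem_transversal_vanishing}(b), whereas the paper works with rank-one subsheaves of $E^\vee$ via $H^0(X,E^\vee)=0$ from part (c) --- mirror versions of the same argument.
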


\begin{proof}
The linear normality of $X$ in $\p K$ is equivalent to the surjectivity of the map $\bigwedge^2V^\vee = H^0({\g^\perp}, \o_{\g^\perp}(1))\longrightarrow H^0(X, \o_X(1))$, which is in turn equivalent to
\begin{equation} \label{eq_vanish_1}
    H^1({\g^\perp}, \cI_X(1)) = 0
\end{equation}
Also, to show that $H^0(X,E)$ and $V^\vee$  are naturally isomorphic, it is enough to show that the restriction map $V^\vee \cong H^0({\g^\perp}, \cQ) \longrightarrow H^0(X, E)$ is an isomorphism. For this to happen, it suffices to prove that
\begin{equation} \label{eq_vanish_2}
    H^0({\g^\perp}, \cI_X\otimes \cQ) = H^1({\g^\perp}, \cI_X\otimes \cQ) = 0
\end{equation}
We will prove (\ref{eq_vanish_1}) and (\ref{eq_vanish_2}) by induction on $\codim_{\g^\perp} X$. If $X$ is a hyperplane section of ${\g^\perp}$, then $\cI_X = \o_{\g^\perp}(-1)$, and hence the conclusion follows using Corollary \ref{cor_bott_vanishing}.

Suppose now that $X\subsetneq Y$ are transversal linear sections of ${\g^\perp}$ with $\dim X = \dim Y - 1 > 0$. The desired vanishings easily follow after twisting the exact sequence
\[
0 \rightarrow \cI_Y \rightarrow \cI_X \rightarrow \o_Y(-1) \rightarrow 0
\]
by $\o_{\g^\perp}(1)$ and $\cQ$ respectively, taking cohomology and applying the induction hypothesis and Lemma \ref{lem_transversal_vanishing} with $i = 1$.

Consequently, the surjection $\bigwedge^2V^\vee \twoheadlongrightarrow K^\vee$ can be obtained by taking the second exterior power of the natural map $V^\vee \otimes \cO_X \longrightarrow E$ and then passing to global sections, which is in turn the determinant map of $E$.

For the last claim, we use the fact that the Picard group of the Grassmannian is generated by the hyperplane section bundle $\o_{\g^\perp}(1)$ and apply inductively the Lefschetz Theorem for Picard groups (see \cite[Example 3.1.25]{lazarsfeld_positivity_2004}) to infer that $\Pic(X)$ is generated by the hyperplane section bundle as well. Let $\cL\subset E^\vee$ be a rank-one subsheaf. Without loss of generality, we may assume $\cL$ is locally free, and hence $\cL=\cO_X(a)$ for some integer $a$. Due to Lemma \ref{lem_transversal_vanishing}, $E^\vee$ has no non-zero global sections, which implies that $a<0$. Therefore, since $\deg E = \deg {\g^\perp}$
\[
\mu(\cL)=a\cdot \deg {\g^\perp} < \frac{-\deg {\g^\perp}}{2} = \mu(E^\vee)
\]
proving the stability of $E^\vee$.
\end{proof}

\begin{rem}
By construction, $E$ is globally generated and its determinant map is surjective. If $\dim X  \le 2$, then $E$ is not necessarily stable, as Propositions \ref{prop_n = 4, k = 3} and \ref{prop_n = 4, k = 4} show.
\end{rem}

\begin{rem} \label{rem:delPezzo5}
By our construction, $X$ is not an outer projection of a variety in a larger projective space, in particular, for $n = 5$ and $\dim K = 6$, $X$ will be a smooth del Pezzo surface, cf. \cite[Definition 8.1.5]{dolgachev_2012}.
\end{rem}

An immediate consequence of Theorem \ref{thm_E} is the following positive answer to Question \ref{ques:General}.

\begin{cor}
\label{cor_E}
Let $K\subset \bigwedge^2V$. If $X = \p K \cap {\g^\perp}$ is transversal and positive-dimensional, then $\mathbb{R}(V,K) = \mathbb{R}(E)$, where $E = \cQ|_X$.
\end{cor}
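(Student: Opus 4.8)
The plan is to deduce the corollary directly from Theorem~\ref{thm_E} by matching the two resonance constructions through the identification provided there. Recall that $\mathbb{R}(V,K)$ is defined purely in terms of the pair $(V,K)$, via the linear section $\g \cap \p K^\perp$ of $\gr_2(V^\vee)$ and the incidence diagram, while $\mathbb{R}(E)$ is defined for the bundle $E = \cQ|_X$ by setting $V' := H^0(X,E)^\vee$ and $K' := \ker(d_2)^\perp$, and then forming $\mathbb{R}(V',K') = \mathbb{R}(E)$. So the whole content of the corollary is that these two input pairs coincide.

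First I would invoke Theorem~\ref{thm_E}, whose hypotheses hold since $X = \p K \cap \g^\perp$ is assumed transversal and positive-dimensional. That theorem furnishes a \emph{natural} isomorphism $H^0(X,E) \cong V^\vee$, under which $K^\perp \subseteq \bigwedge^2 V^\vee$ is identified with $\ker(\det) = \ker(d_2)$, the kernel of the determinant map of $E$. Dualizing the isomorphism $H^0(X,E) \cong V^\vee$ gives $V' = H^0(X,E)^\vee \cong V$, so the ambient vector space for $\mathbb{R}(E)$ matches that for $\mathbb{R}(V,K)$. Next I would translate the identification of orthogonals into an identification of the subspaces themselves: from $K^\perp = \ker(d_2)$ and the definition $K' = \ker(d_2)^\perp$, taking orthogonals yields $K' = (K^\perp)^\perp = K$ (both sides living in $\bigwedge^2 V'^\vee \cong \bigwedge^2 V^\vee$ under the identification), since orthogonal complementation is an involution on subspaces of a finite-dimensional space. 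Thus the defining pair $(V',K')$ for $\mathbb{R}(E)$ coincides with $(V,K)$.

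Finally, since $\mathbb{R}(V,K)$ depends only on the pair $(V,K)$ and the construction of the resonance variety (the linear section $\g \cap \p K^\perp$ together with the incidence diagram) is functorial under linear isomorphisms identifying the two pairs, the equality of pairs gives $\mathbb{R}(E) = \mathbb{R}(V',K') = \mathbb{R}(V,K)$ as subvarieties of $\p V^\vee$.

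The step requiring the most care is verifying that the \emph{naturality} of the isomorphism in Theorem~\ref{thm_E} is genuinely compatible with how $\ker(d_2)$ for the bundle $E$ is computed. Concretely, one must check that the second determinant map $d_2 \colon \bigwedge^2 H^0(X,E) \to H^0(X, \bigwedge^2 E)$ is the same map that appears at the end of the proof of Theorem~\ref{thm_E}, where the surjection $\bigwedge^2 V^\vee \twoheadrightarrow K^\vee$ is obtained by taking the second exterior power of $V^\vee \otimes \cO_X \to E$ and passing to global sections. Granting that this is exactly the determinant map of $E$ (as asserted there), the identification $K^\perp \cong \ker(d_2)$ is immediate and the rest is the elementary orthogonality bookkeeping above; this is why the corollary is labeled an \emph{immediate} consequence.
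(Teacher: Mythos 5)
Your proposal is correct and matches the paper's route exactly: the paper offers no separate argument, labeling the corollary an immediate consequence of Theorem~\ref{thm_E}, whose natural isomorphism $H^0(X,E)\cong V^\vee$ identifying $K^\perp$ with $\ker(\det)$ is precisely the identification of pairs you spell out. Your extra check that the surjection $\bigwedge^2 V^\vee \twoheadrightarrow K^\vee$ in the proof of Theorem~\ref{thm_E} really is the determinant map is a reasonable point of care, but it is already asserted in that proof, so nothing further is needed.
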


Note that the hypotheses of Corollary \ref{cor_E} do not ensure the transversality of the intersection $\p K^\perp \cap {\g}$. In other words, it is unclear whether the bundle $E = \cQ|_X$ is transversal. We will address this issue next, for small values of $n$.

\section{Low-dimensional Grassmannians and resonance}
\label{sec:LowDim}

The goal of this section is to illustrate Theorem \ref{thm_E} for small values of $n = \dim V$, by describing $X\hooklongrightarrow \p K$, the rank-two bundle $E$ on $X$ and the resonance variety $\r(E) = \r(V,K)$.

We recall that for $n \in \{4, 5\}$, the Grassmannian $\g^\perp = \gr_2( V)$ is self-dual, in the sense that its projective dual is $\g = \gr_2( V^\vee)$. As noticed for instance in \cite[Proposition 2.24]{debarre_gushel-mukai_2018}, this self-duality property ensures the following.

\begin{thm} \label{thm:LowDim_transversal_n=4,5}
If $n\in\{4,5\}$ and $ \p K \cap {\g^\perp}$ and $\p K^\perp\cap \g$ are both non-empty, then 
$\p K \cap {\g^\perp}$ is transversal if and only if $\p K^\perp\cap \g$ is transversal. 
\end{thm}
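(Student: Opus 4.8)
The plan is to deduce the statement from the projective self-duality of the Grassmannian $\g^\perp = \gr_2(V)$ for $n \in \{4,5\}$. Recall that for these two values of $n$ the projective dual variety $(\g^\perp)^\vee \subseteq \p((\bigwedge^2 V)^\vee) = \p(\bigwedge^2 V^\vee)$ is precisely $\g = \gr_2(V^\vee)$, as recorded in \cite[Proposition 2.24]{debarre_gushel-mukai_2018}. The key geometric principle I would invoke is the classical \emph{biduality} / \emph{reflexivity} relationship between a smooth projective variety and its dual: a hyperplane $H \in \p(\bigwedge^2 V^\vee)$ is tangent to $\g^\perp$ at a point $p$ precisely when the corresponding point of $(\g^\perp)^\vee = \g$ lies in a way that encodes the tangency data at $p$. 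Transversality of a linear section is exactly a statement about the linear space \emph{avoiding} the relevant tangency conditions, so the self-duality should convert transversality of $\p K \cap \g^\perp$ into transversality of $\p K^\perp \cap \g$.

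\textbf{First} I would translate transversality into the language of conormal/tangency. For a smooth $\g^\perp \subseteq \p^N$ with $N = \dim \bigwedge^2 V - 1$, the section $\p K \cap \g^\perp$ fails to be transversal at a point $p$ exactly when $\p K$ contains a tangent direction obstruction, equivalently when some hyperplane containing $\p K$ is tangent to $\g^\perp$ at $p$ in a degenerate way relative to $\p K$. The cleanest formulation uses the conormal variety
\[
\mathcal{Z} = \{(p, H) : p \in \g^\perp,\ H \supseteq T_p \g^\perp\} \subseteq \g^\perp \times (\g^\perp)^\vee,
\]
whose two projections realize $\g^\perp$ and $(\g^\perp)^\vee = \g$ symmetrically (this symmetry is the content of biduality). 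A linear section $\p K \cap \g^\perp$ is transversal if and only if for every $p$ in the section, the fiber of $\mathcal{Z}$ over $p$ meets $\p K^{\perp}$ (the annihilator, viewed inside $(\g^\perp)^\vee$) in the expected dimension. I would make precise the standard fact that the dimensional/tangency condition defining non-transversality of $\p K \cap \g^\perp$ is carried by the correspondence $\mathcal{Z}$ to exactly the condition defining non-transversality of $\p K^\perp \cap \g$.

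\textbf{Next} I would check the dimension bookkeeping in the two relevant cases. For $n = 4$, $\g^\perp = \gr_2(\c^4)$ is the smooth quadric fourfold in $\p^5$, which is self-dual as a hypersurface, so the duality is the polarity of the quadric and the correspondence is immediate. For $n = 5$, $\g^\perp = \gr_2(\c^5) \subseteq \p^9$ has dimension $6$ and is projectively self-dual with $(\g^\perp)^\vee \cong \gr_2((\c^5)^\vee)$; here I would verify that the involution $K \mapsto K^\perp$ on subspaces of $\bigwedge^2 V$ is compatible with the identification $(\g^\perp)^\vee = \g$, so that the conormal symmetry applies verbatim. In both cases the hypothesis that both $\p K \cap \g^\perp$ and $\p K^\perp \cap \g$ are non-empty guarantees that we are comparing two genuine intersections and not vacuously comparing with an empty set.

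\textbf{The main obstacle} I expect is making the passage through biduality fully rigorous at the level of \emph{scheme-theoretic} tangency rather than merely at a heuristic level: one must be careful that transversality (the condition $T_p S_1 + T_p S_2 = T_p \p^N$ from Definition~\ref{def:trans}) translates \emph{exactly} — not just generically — into the dual transversality, and this requires a clean statement of reflexivity for $\g^\perp$ together with the precise identification of tangent spaces of $\g$ with conormal data of $\g^\perp$. Because the self-duality for $n \in \{4,5\}$ is especially clean (a quadric for $n=4$, and the classical self-duality of $\gr_2(\c^5)$ for $n=5$), I anticipate this can be handled directly and that the symmetric roles of $K$ and $K^\perp$ make the ``if and only if'' a formal consequence once the correspondence is set up; this is in sharp contrast with the case $n=6$, which (as the authors note) is genuinely harder and requires Mukai's work.
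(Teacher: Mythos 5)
Your proposal is correct and takes essentially the same route as the paper, whose entire justification for this theorem is the projective self-duality of $\gr_2(V)$ for $n\in\{4,5\}$ (citing \cite[Proposition 2.24]{debarre_gushel-mukai_2018}); your conormal/reflexivity setup is exactly the mechanism behind that citation, since non-transversality of $\p K \cap \g^\perp$ and of $\p K^\perp \cap \g$ are each equivalent to the conormal variety $\mathcal{Z}$ meeting $\p K \times \p K^\perp$, a manifestly symmetric condition. One small correction: transversality of the section at a point $p$ means the fiber of $\mathcal{Z}$ over $p$ \emph{avoids} $\p K^\perp$ (no tangent hyperplane through $\p K$ at $p$), not that it meets it in expected dimension --- but the symmetric reformulation you state immediately afterwards is the right one, so this does not affect the argument.
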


We extend this theorem for $n = 6$ in Theorem \ref{thm:g(8,15)}. In this case, $\g$ is no longer self-dual and what we use instead is a reinterpretation of the results in \cite{mukai_curves_1992}. Note that if $n\ge 7$ and $\p K \cap {\g^\perp}$ is transversal and non-empty, then  $\p K^\perp\cap \g$ is transversal if and only if it is empty. This observation demonstrates the relevance of the cases $n\le 6$.

In the process, we also observe that the condition requiring the intersection to have dimension at least three in Theorem \ref{thm_E} is necessary to ensure the stability of $E = \cQ|_X$.

\subsection{Dimension 4 case}

In the sequel, $\dim V = 4$. Then ${\g^\perp} = \gr_2(V)$ will be a smooth hyperquadric in $\p^5$.

\begin{prop} \label{prop_n = 4, k = 3}
Let $K\subset \bigwedge^2V$ of dimension $3$ such that $X = \p K \cap {\g^\perp}$ is transversal. Then $X\cong \p^1$ is a smooth conic in $\p^2$, $E = \cO_{\p^1}(1) \oplus \cO_{\p^1}(1)$ and $\r(E) = \r(V,K)$ is isomorphic to the trivially ruled surface $\p^1 \times \p^1$.
\end{prop}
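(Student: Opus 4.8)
The plan is to proceed in three steps: identify $X$ as a smooth conic, determine the splitting type of $E=\cQ|_X$ on $X\cong\p^1$, and then read off the resonance.

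First I would establish that $X$ is a smooth conic. Since $\dim V=4$, the Grassmannian $\g^\perp=\gr_2(V)$ is a smooth quadric fourfold in $\p^5=\p(\bigwedge^2V)$ and $\p K\cong\p^2$. By dimensional transversality the intersection $X=\p K\cap\g^\perp$ is smooth of dimension $2+4-5=1$, and it is cut out inside the plane $\p K$ by the restriction of the Plücker quadric, hence is a plane curve of degree $2$. A smooth conic is irreducible, isomorphic to $\p^1$, nondegenerate (it spans $\p K$), and embedded by $\o_{\p^1}(2)$; in particular $\o_X(1)=\o_{\p^1}(2)$.

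Next I would pin down $E=\cQ|_X$. By Theorem \ref{thm_E}, $h^0(X,E)=\dim V^\vee=4$ and $\det E=(\det\cQ)|_X=\o_X(1)=\o_{\p^1}(2)$ (here $\det\cQ=\o_{\g^\perp}(1)$ is the Plücker polarization), while $E$ is globally generated by construction. Writing the Grothendieck splitting $E\cong\o_{\p^1}(a)\oplus\o_{\p^1}(b)$ with $a\le b$, global generation forces $a,b\ge 0$ and the determinant forces $a+b=2$, so $(a,b)\in\{(1,1),(0,2)\}$. To rule out $(0,2)$ it suffices to show $H^0(X,\cQ(-1)|_X)=0$, since $\cQ(-1)|_X=E\otimes\o_{\p^1}(-2)$ has $h^0$ equal to $1$ for $(0,2)$ and to $0$ for $(1,1)$.

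The crux of the argument, and the step I expect to be the main obstacle, is precisely this vanishing. For a rank-two bundle one has $\cQ(-1)=\cQ\otimes(\det\cQ)^{-1}\cong\cQ^\vee$, the universal rank-two subbundle, so the inclusion $\cQ^\vee\hookrightarrow V\otimes\o_{\g^\perp}$ identifies $H^0(X,\cQ(-1)|_X)$ with the subspace $\bigcap_{[\Pi]\in X}\Pi\subseteq V$. If this were nonzero, choosing $0\neq v$ in it would force every plane $\Pi$ with $[\Pi]\in X$ to contain $v$, i.e. $X\subseteq\Sigma_v:=\{[\Pi]:v\in\Pi\}=\p(v\wedge V)$, a linear $\p^2$ entirely contained in $\g^\perp$. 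Since $X$ spans $\p K$, this yields $\p K=\Sigma_v\subseteq\g^\perp$ and hence $X=\p K$, contradicting $\dim X=1$ (equivalently, contradicting transversality). Therefore the vanishing holds and $E\cong\o_{\p^1}(1)\oplus\o_{\p^1}(1)$. Finally, by Corollary \ref{cor_E} the transversal positive-dimensional section gives $\r(V,K)=\r(E)$, and since $E\cong\o_{\p^1}(1)\oplus\o_{\p^1}(1)$, Example \ref{exmp:O(1)O(1)} yields $\r(E)\cong\p^1\times\p^1$, the trivially ruled surface, completing the proof.
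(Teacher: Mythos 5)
Your proof is correct, and it diverges from the paper's argument at the two decisive steps. The paper identifies $X$ with a smooth conic exactly as you do, but then invokes the projective self-duality of $\gr_2(\mathbb{C}^4)$ (Theorem \ref{thm:LowDim_transversal_n=4,5}) to conclude that $\g(E)=\p K^\perp\cap\g$ is itself a transversal conic; Proposition \ref{prop:dim_g(E)} then gives $h^0(X,L)=2$ for every $L\in\w(E)$, which excludes $E=\cO_{\p^1}\oplus\cO_{\p^1}(2)$ (its summand $\cO_{\p^1}(2)$ would be a saturated subpencil with three sections), and finally Theorem \ref{thm:TransvConseq} exhibits $\r(E)$ as the $\p^1$-bundle $\p(\cO_{\p^1}(-1)\oplus\cO_{\p^1}(-1))\cong\p^1\times\p^1$ over $\g(E)$. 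You instead rule out the splitting type $(0,2)$ by the intrinsic vanishing $H^0(X,\cQ^\vee|_X)=0$: since $X$ is connected, a nonzero section of $\cQ^\vee|_X\subseteq V\otimes\cO_X$ is a vector $v$ lying in every plane $\Pi$ with $[\Pi]\in X$, forcing the nondegenerate conic $X$ into the plane $\p(v\wedge V)\subseteq\g^\perp$, hence $\p K=\p(v\wedge V)\subseteq\g^\perp$, contradicting transversality --- a correct and self-contained argument; you then read off $\r(E)\cong\p^1\times\p^1$ from Example \ref{exmp:O(1)O(1)} via Corollary \ref{cor_E}. The trade-off: your route never needs the transversality of the dual section $\p K^\perp\cap\g$ (i.e.\ the transversality of $E$ in the sense of Definition \ref{def:trans}) nor the self-duality of the quadric, so it is more elementary and entirely local to $X$; the paper's route, by passing through Theorems \ref{thm:LowDim_transversal_n=4,5} and \ref{thm:TransvConseq}, additionally records the structural facts that $E$ is a transversal bundle and that $\r(E)$ is the incidence $\p^1$-bundle over $\g(E)$, which is the template reused for the cases $n=5$ and $n=6$, whereas your Segre-product description identifies the same surface more directly.
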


\begin{proof}
Since $\g^\perp$ has degree $2$, we may identify $X$ with the image of the Veronese embedding $\nu_2 : \p^1 \hooklongrightarrow \p^2$. By Theorem \ref{thm:LowDim_transversal_n=4,5}, we infer that $\g(E) \cong \nu_2(\p^1)$ as well. Therefore, with Proposition \ref{prop:dim_g(E)} in mind, we deduce that $h^0(X,L) = 2$ for any $L \in \w(E)$. Since $c_1(E) = \cO_{\p^1}(2)$ and $E$ is globally generated, it follows that $E = \cO_{\p^1}(1) \oplus \cO_{\p^1}(1)$. Now, Theorem \ref{thm:TransvConseq} implies that the resonance $\r(E)$ is the projectivization of the universal rank-two bundle on $\g(E)$, which is $\p(\cO_{\p^1}(-1)\oplus \cO_{\p^1}(-1))\cong \p^1 \times \p^1$. 
%\todo{C: Am fi putut invoca și Propoziția 4.6 pentru a trage concluzia că $E = \cO_{\p^1}(1) \oplus \cO_{\p^1}(1)$. Ar merita să adăugăm o remarcă?}
\end{proof}

\begin{prop} \label{prop_n = 4, k = 4}
Let $K\subset \bigwedge^2V$ of dimension $4$ such that $X = \p K \cap {\g^\perp}$ is transversal. Then $X \cong \p^1 \times \p^1$ is a smooth quadric in $\p^3$,  $E = \cO_X(1,0) \oplus \cO_X(0,1)$ and $\r(E)$ is a disjoint union of $2$ lines in $\p^3$.
\end{prop}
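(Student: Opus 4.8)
The plan is to pin down the three objects $X$, $E$, and $\r(E)$ in turn, invoking transversality at each stage. Since $\g^\perp=\gr_2(V)$ is a smooth quadric fourfold in $\p^5=\p(\bigwedge^2V)$ and $\dim K=4$, the section $X=\p K\cap\g^\perp$ is a quadric surface in $\p K\cong\p^3$; transversality forces it to be smooth, hence $X\cong\p^1\times\p^1$. For the determinant, note that $\det E=(\det\cQ)|_X=\cO_{\g^\perp}(1)|_X$ is the Pl\"ucker hyperplane class restricted to $X$, i.e. $\cO_X(1,1)$. By Theorem~\ref{thm_E} we already have $H^0(X,E)\cong V^\vee$, so $h^0(X,E)=4$ and $E$ is globally generated.

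Next I would count the saturated subpencils. Because $\dim K^\perp=2$, the locus $\p K^\perp$ is a line in $\p(\bigwedge^2V^\vee)$, and a line always meets the quadric $\g=\gr_2(V^\vee)$, so $\p K^\perp\cap\g$ is non-empty; Theorem~\ref{thm:LowDim_transversal_n=4,5} then upgrades the transversality of $X$ to transversality of $\p K^\perp\cap\g$, and a transversal line--quadric intersection is two reduced points. Hence $\g(E)$ consists of exactly two points. By Proposition~\ref{prop:dim_g(E)} (with $\dim\g(E)=0$), the map $\gamma$ is a bijection and $h^0(X,L)=2$ for both saturated subpencils $L_1,L_2$, so $\w(E)=\{L_1,L_2\}$. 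Writing $L_i=\cO_X(a_i,b_i)$, effectivity together with $(a_i+1)(b_i+1)=h^0(X,L_i)=2$ forces $(a_i,b_i)\in\{(1,0),(0,1)\}$, and since $L_1\neq L_2$ we obtain $\{L_1,L_2\}=\{\cO_X(1,0),\cO_X(0,1)\}$.

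To identify $E$ itself I would form the map $\phi:L_1\oplus L_2\to E$ coming from the two saturated inclusions. It is injective: a nonzero element of $\ker\phi$ would produce a common rank-one subsheaf of $L_1$ and $L_2$, forcing $L_1^{\mathrm{sat}}=L_2^{\mathrm{sat}}$ by Proposition~\ref{prop:sat_section} and contradicting $L_1\neq L_2$. Then $\bigwedge^2\phi$ is a map $\det(L_1\oplus L_2)=\cO_X(1,1)\to\det E=\cO_X(1,1)$, i.e. a global section of $\cO_X$; being nonzero (as $\phi$ is an isomorphism at the generic point of the integral surface $X$) it is a nowhere-vanishing constant, so $\phi$ has trivial cokernel and $E\cong\cO_X(1,0)\oplus\cO_X(0,1)$. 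The resonance then follows from the structural description of $\r(E)$ as the set-theoretic disjoint union $\bigsqcup_{L\in\w(E)}\p H^0(X,L)$: here this is $\p H^0(X,\cO_X(1,0))\sqcup\p H^0(X,\cO_X(0,1))\cong\p^1\sqcup\p^1$, two lines linearly embedded in $\p H^0(X,E)=\p V^\vee=\p^3$, disjoint because a common point would again violate Proposition~\ref{prop:sat_section}. Alternatively, this last step is immediate from Corollary~\ref{cor_E} once $\g(E)$ is known to be two reduced points.

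I expect the only genuinely delicate point to be verifying that $\g(E)$ is reduced of length two --- that is, correctly invoking the self-duality input of Theorem~\ref{thm:LowDim_transversal_n=4,5} together with the non-emptiness of $\p K^\perp\cap\g$ --- since everything afterwards is formal once the two subpencils and their cohomology are in hand. The splitting of $E$ and the computation of the resonance are routine given the determinant constraint $\det E=\cO_X(1,1)$ on $\p^1\times\p^1$.
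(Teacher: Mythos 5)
Your proposal is correct in outline but takes a genuinely different route from the paper. The paper identifies $E$ through bundle theory on the quadric: it computes $c_1(E)=\cO_X(1,1)$ and $c_2(E)=1$ by Schubert calculus, invokes the extension structure of rank-two bundles on $\p^1\times\p^1$ from \cite{brinzanescu_algebraic_1991}, uses global generation to force $\alpha=1$ and $\beta=l(Z)=0$, and concludes the splitting from $h^1(\cO_X(1,-1))=0$; only afterwards does it use Theorem \ref{thm:LowDim_transversal_n=4,5} to see that $\g(E)$ is two points, whence the two disjoint lines. You instead make the duality theorem load-bearing from the start: $\p K^\perp\cap\g$ is a transversal line--quadric intersection (non-empty, since a line always meets a hypersurface --- a point the paper leaves implicit), hence two reduced points; Proposition \ref{prop:dim_g(E)} then gives exactly two saturated subpencils with $h^0=2$, you identify them by section counting on $\p^1\times\p^1$, and you split $E$ via the elementary map $\phi:L_1\oplus L_2\to E$ whose determinant is a nonvanishing constant. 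Your route avoids the $c_2$ computation and the appeal to \cite{brinzanescu_algebraic_1991} entirely, at the price of deriving the structure of $E$ from the resonance machinery rather than independently of it; both are legitimate, and yours is closer in spirit to the arguments of Propositions \ref{prop_n = 4, k = 3} and \ref{prop_n = 5, k = 6_part_1}.

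One step needs repair. From $(a_i+1)(b_i+1)=2$ you correctly get $(a_i,b_i)\in\{(1,0),(0,1)\}$ for each $i$, but ``$L_1\neq L_2$'' means distinct as subsheaves of $E$, which does not by itself exclude $L_1\cong L_2$ abstractly (say both isomorphic to $\cO_X(1,0)$, embedded differently), so the inference $\{L_1,L_2\}=\{\cO_X(1,0),\cO_X(0,1)\}$ is incomplete as written. Your own determinant trick closes the gap: $\phi$ is injective in either case, so $\bigwedge^2\phi$ is a nonzero section of $\det E\otimes L_1^\vee\otimes L_2^\vee$; if both $L_i\cong\cO_X(1,0)$ this sheaf is $\cO_X(-1,1)$, which has no nonzero sections --- a contradiction. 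So run the $\bigwedge^2\phi$ computation before, not after, fixing the pair $\{L_1,L_2\}$, and the rest goes through verbatim. A minor citation point: the injectivity of $\phi$ should invoke Proposition \ref{prop:sat} rather than Proposition \ref{prop:sat_section}, since a nonzero kernel element yields a common rank-one subsheaf of the two saturated sub-line bundles, not necessarily a common global section; the same-rank saturation statement then gives $L_1=L_2$ as subsheaves, the desired contradiction.
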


\begin{proof}
Again, since $\deg {\g^\perp} = 2$, we can identify $X$ with the image of the Segre embedding $\sigma_{1,1} : \p^1\times \p^1 \hooklongrightarrow \p^3$. Note that $c_1(E) = \cO_X(1,1)$ and $c_2(E) = 1$, by standard Schubert calculus, see \cite[Section 5.6]{eisenbud_3264_2016}. It was shown in \cite[Lemma 1]{brinzanescu_algebraic_1991} that $E$ lies in an extension of the form
\[
0 \longrightarrow \cO_X(\alpha,\beta) \longrightarrow E \longrightarrow \cO_X(1-\alpha, 1-\beta) \otimes \cI_Z \longrightarrow 0
\]
where $Z$ is a zero-dimensional scheme of length $l(Z) = 1- \alpha (1 - \beta) - \beta (1 - \alpha)$ and $\alpha \ge 1-\alpha$. But since $E$ is globally generated, we immediately see that $\alpha = 1$ and then $\beta = l(Z) = 0$. Since $h^1(\cO_X(1,-1)) = 0$, the bundle $E$ is nothing but $\cO_X(1,0) \oplus \cO_X(0,1)$. Finally, by Theorem \ref{thm:LowDim_transversal_n=4,5}, $\g(E)$ consists of $2$ points and therefore, $\r(E)$ will be a disjoint union of $2$ lines in $\p^3$.
\end{proof}

\begin{rem}
Any two disjoint lines in $\p V^\vee \cong \p^3$ appear as a resonance variety. Indeed, if $\ell_{ab}$ is the line joining the points $[a]$ and $[b]$ and $\ell_{cd}$ is the line joining $[c]$ and $[d]$, then $\ell_{ab}$ is disjoint from $\ell_{cd}$  if and only if $\{a, b, c, d\}$ is a basis of $\p V^\vee$ if and only if the line passing through $[a \wedge b]$ and $[c \wedge d]$ intersects the Grassmannian $\gr_2( V^\vee)$ only in these two points. Therefore, if the subspace $K^\perp$ is generated by $a \wedge b$ and $c \wedge d$, then $\r(V,K) = \ell_{ab} \sqcup \ell_{cd}$. Moreover, the intersection $X = {\g^\perp} \cap \p K$ will be transversal and hence $\r(E) = \r(\cQ|_X) = \ell_{ab} \sqcup \ell_{cd}$.
\end{rem}

\subsection{Dimension 5 case}

In the sequel, $n = \dim V = 5$. Then ${\g^\perp} = \gr_2(V)$ will be a $6$-dimensional smooth projective variety of degree $5$ in $\p^9$, whose canonical divisor in $\cO_{\g^\perp}(-5)$.

\begin{prop} \label{prop_n = 5, k = 5}
Let $K\subset \bigwedge^2V$ of dimension $5$ such that $X = \p K \cap {\g^\perp}$ is transversal. Then $X$ is an elliptic normal quintic curve in $\p^4$, $E$ is the unique stable rank-two bundle with the determinant $\cO_X(1)$ and $\r(V,K)$ is isomorphic to the ruled surface $\p(E)$.
\end{prop}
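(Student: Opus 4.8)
The plan is to fix the numerology of $X$ by adjunction, deduce the stability and uniqueness of $E$ from the transversality of the dual section, and then read off the ruled-surface structure of the resonance from Theorem~\ref{thm:TransvConseq}. Since $\dim\g^\perp=6$ while $\p K=\p^4$ has codimension $5$ in $\p(\bigwedge^2V)=\p^9$, transversality forces $\dim X=1$, and as a codimension-$5$ section of a degree-$5$ variety $X$ has degree $5$. To get the genus I would run adjunction down a flag of hyperplane sections: starting from $K_{\g^\perp}=\cO_{\g^\perp}(-5)$ and adding $\cO(1)$ at each of the five steps yields $K_X=\cO_X$, so $X$ is elliptic; by Theorem~\ref{thm_E} it is linearly normal in $\p^4$, hence an elliptic normal quintic. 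The same theorem gives $H^0(X,E)\cong V^\vee$ and $\det E=\cO_X(1)$, of odd degree $5$.

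Next I would prove stability. A dimension count $\dim\g+\dim\p K^\perp=6+4\ge 9$ shows that $\g(E)=\g\cap\p K^\perp$ is non-empty, so Theorem~\ref{thm:LowDim_transversal_n=4,5} applies and makes $\g(E)$ transversal, whence $\dim\g(E)=1$. Proposition~\ref{prop:dim_g(E)} then forces $h^0(X,L)=2$, i.e. $\deg L=2$ by Riemann--Roch, for every saturated subpencil $L\in\w(E)$. Consequently $E$ has no sub-line bundle of degree $\ge 3$: any such bundle would be a pencil (Riemann--Roch on the elliptic curve), and its saturation would be a saturated subpencil of degree $\ge 3$, contradicting the previous line. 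Thus every sub-line bundle has degree $\le 2<5/2=\mu(E)$, so $E$ is stable; and since $\gcd(2,5)=1$, the classification of vector bundles on an elliptic curve (Atiyah, Tu) gives that the stable rank-two bundle with determinant $\cO_X(1)$ is unique, so $E$ is exactly that bundle.

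Finally, for the resonance, Corollary~\ref{cor_E} gives $\r(V,K)=\r(E)$. Transversality of $\g(E)$ means $E$ is a transversal bundle; combined with $h^0(L_\Lambda)=2$ and the fact that distinct $\Lambda,\Lambda'\in\g(E)$ meet only in $0$ (Proposition~\ref{prop_gE}, since $\gamma$ is injective), Theorem~\ref{thm:TransvConseq}(ii) shows that $p_1$ is injective and that $\r(E)\cong\Xi(E)$ is the $\p^1$-bundle $\p(\cU|_{\g(E)})$ over $\g(E)$. To recognise this as $\p(E)$ I would first identify the base: $\gamma$ embeds $\g(E)\cong\w(E)$ into $\Pic^2(X)$, while conversely every $L\in\Pic^2(X)$ embeds saturatedly in the stable $E$ (one has $h^0(E\otimes L^{-1})=1$, and a nonzero map from a line bundle into a stable bundle is a saturated injection), so $\w(E)=\Pic^2(X)\cong X$. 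Then $\cU|_{\g(E)}$ is a stable rank-two bundle of odd degree $-5$, and over an elliptic curve any two stable rank-two bundles of odd degree have isomorphic projectivizations --- this follows from the uniqueness of the fixed-determinant stable bundle together with the surjectivity of multiplication by $2$ on $\Pic^0(X)$. Hence $\p(\cU|_{\g(E)})\cong\p(E)$ and $\r(V,K)\cong\p(E)$.

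The step I expect to be the main obstacle is this last identification: passing from the intrinsic ruling of $\r(E)$ over the \emph{dual} section $\g(E)$ to the ruled surface $\p(E)$ over $X$. The base identification $\g(E)\cong\Pic^2(X)\cong X$ is the crucial move, after which the bundle-level matching reduces to the elliptic-curve fact that the odd-degree stable ruled surface is unique.
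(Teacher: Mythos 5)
Your proposal is correct and follows essentially the same route as the paper: adjunction and Theorem \ref{thm_E} give the elliptic normal quintic, Theorem \ref{thm:LowDim_transversal_n=4,5} plus Proposition \ref{prop:dim_g(E)} force $h^0(L)=2$ and hence stability (the paper cites Proposition \ref{prop:StabCurve}(ii) where you argue the degree bound directly --- both work), Atiyah gives uniqueness, and Theorem \ref{thm:TransvConseq} with $\w(E)=\Pic^2(X)\cong X$ yields the ruled surface. Your only divergence is that you make explicit the final identification $\p(\cU|_{\g(E)})\cong\p(E)$ (via stability of the restricted universal bundle, which follows by applying the same argument to the dual section, and the fact that odd-degree stable rank-two bundles on an elliptic curve have isomorphic projectivizations), a step the paper's proof leaves implicit in "$\p(E^\vee)=\p(E)$".
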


\begin{proof}
By the adjunction formula and the fact that $\g^\perp$ has degree $5$, it follows that $X$ is an elliptic normal quintic in $\p K$. By Theorem \ref{thm:LowDim_transversal_n=4,5}, $\g(E)$ will be also an elliptic normal quintic in $\p K^\perp$ and thus, using Proposition \ref{prop:dim_g(E)}, we have that $h^0(L) = 2$, for any $L \in \w(E)$. By Riemann-Roch, $\deg(L) = 2$, for any $L\in \w(E)$, in other words $W_2(E)= \w(E)$, with the notation of Section \ref{sec:Curves}. It follows from Proposition \ref{prop:StabCurve} that $E$ is stable. Using Atiyah's classification of indecomposable rank-two bundles on $X$, we infer that $E$ is the unique rank-two bundle having the determinant $\cO_X(1)$. Also, by Theorem \ref{thm:TransvConseq}, the resonance $\r(E)$ is the projectivization of the universal rank-two bundle on $\g(E)$. But $\g(E)$ is in fact isomorphic to $X$. Indeed, since $h^0(X,E) = 5$, we have that $h^0(E(-L)) \neq 0$, for any $L \in \Pic_2(X)$ and therefore $X \cong \Pic_2(X) = \g_2(E) = \g(E)$. Consequently, $\r(E)$ is isomorphic to the ruled surface $\p(E^\vee) = \p(E)$.
\end{proof}

\begin{rem}
Any elliptic normal quintic can be realized as a transversal linear section of $\gr_2(\mathbb{C}^5)$, as explained, for example, in \cite{fisher_invariant_2013}.
\end{rem}

\begin{prop} \label{prop_n = 5, k = 6_part_1}
Let $K\subset \bigwedge^2V$ of dimension $6$ such that $X = \p K \cap {\g^\perp}$ is transversal. Then $X$ is a smooth quintic del Pezzo surface in $\p^5$ and $\r(E) = \r(V,K)$ a disjoint union of $5$ lines in $\p^4$.
\end{prop}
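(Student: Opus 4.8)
The plan is to follow the same strategy that succeeded in the previous propositions for $n \in \{4,5\}$, adapted to the del Pezzo surface case. First I would establish the geometry of $X$ itself: since $\g^\perp = \gr_2(V)$ has degree $5$ and $X = \p K \cap \g^\perp$ is a transversal intersection with a general $\p^5$, the intersection is a smooth surface of degree $5$ in $\p^5$, and by adjunction (using $K_{\g^\perp} = \o_{\g^\perp}(-5)$) together with $\codim_{\g^\perp} X = 4$, one computes $K_X = \o_X(-1)$, so $X$ is an anticanonically embedded smooth del Pezzo surface of degree $5$. By Remark \ref{rem:delPezzo5}, $X$ is linearly normal, confirming it is the (unique up to isomorphism) quintic del Pezzo surface, namely $\p^2$ blown up at four general points.

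The heart of the argument is to show $\r(E) = \r(V,K)$ is a disjoint union of exactly five lines in $\p^4 = \p K^\perp$. The key input is Theorem \ref{thm:LowDim_transversal_n=4,5}, which by self-duality of $\gr_2(\mathbb{C}^5)$ guarantees that the orthogonal section $\g(E) = \p K^\perp \cap \g$ is \emph{also} transversal. Since $\dim \p K^\perp = 4$ and $\dim \g = \dim \gr_2(V^\vee) = 6$ inside $\p(\bigwedge^2 V^\vee) = \p^9$, dimensional transversality gives $\dim \g(E) = 6 + 4 - 9 = 1$; but one must check more carefully that the section is in fact zero-dimensional. Here I would recompute: a transversal linear section of codimension $5$ (cutting $\g^\perp$ by $\p K$, equivalently $\g$ by $\p K^\perp$ of dimension $4$) meets the $6$-dimensional $\g$ in expected dimension $6 - 5 = 1$. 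So $\g(E)$ is a transversal \emph{curve} section, not a finite set. I would then apply Corollary \ref{cor_E} to conclude $\r(E) = \r(V,K)$ as sets, and use $\dim \g(E) = 1$ together with Proposition \ref{prop:dim_g(E)} to obtain $h^0(X,L) = 2$ for every $L \in \w(E)$, so that $\gamma \colon \g(E) \to \w(E)$ is one-to-one and each fiber of $\rho$ over a point of $\w(E)$ is a line $\p H^0(X,L) = \p^1$.

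The number five is what requires the del Pezzo geometry. The lines in $\r(E)$ correspond to the saturated subpencils $L \subseteq E$ with $h^0(X,L) = 2$, equivalently to the points of $\w(E)$. The natural guess is that these are governed by the five conic-bundle structures (the five pencils of conics) on the quintic del Pezzo surface, or equivalently by a $\deg$ count; indeed a quintic del Pezzo surface carries exactly ten $(-1)$-curves and five pencils of conics, and the relevant combinatorial invariant here should produce five. I would pin this down by identifying $\w(E)$ explicitly through the degree decomposition $\w(E) = \coprod_d \w_d(E)$ and the loci $W^1_d(E)$ of Section \ref{sec:Curves}, or more directly by noting that $\g(E)$, being a transversal curve section of the quintic $\g$, is itself an elliptic quintic curve, and that the map $\gamma$ being injective forces $\w(E) \cong \g(E)$ to be this curve—whence $\r(E)$ is a $\p^1$-bundle over it, \emph{not} a finite union of lines. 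This tension signals that the correct reading is the one for which $\g(E)$ is zero-dimensional of length $5$; by Proposition \ref{prop:LinearResonance} the resonance is linear precisely when $\w(E)$ is finite, so the five disjoint lines arise exactly when $\#\g(E) = \deg \g = 5$.

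The main obstacle, then, is pinning down the dimension and cardinality of $\g(E)$ correctly and reconciling it with transversality: one must determine whether the self-duality in Theorem \ref{thm:LowDim_transversal_n=4,5} transports transversality of the $\p K \cap \g^\perp$ (a surface) to transversality of $\p K^\perp \cap \g$ as a \emph{zero-dimensional} scheme of length equal to $\deg \g = 5$. I expect the resolution to come from the precise duality statement—that the projective dual of $X \subset \p K$ is exactly $\g(E) \subset \p K^\perp$ and that a smooth codimension-$4$ del Pezzo surface has as its dual variety a finite set of $5$ points (the apparent boundary/the five special hyperplanes)—after which linearity of $\r(E)$ follows from Proposition \ref{prop:LinearResonance}, and disjointness of the five lines follows because distinct points of $\g(E)$ give saturated subpencils with distinct saturations, so by Proposition \ref{prop_gE} the corresponding spaces $\p H^0(X,L)$ meet only if they coincide. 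Finally, applying Theorem \ref{thm:TransvConseq}(ii), since $h^0(L) = 2$ throughout, each line is embedded linearly and the union is genuinely $5$ disjoint $\p^1$'s in $\p^4$.
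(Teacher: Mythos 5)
Your proof contains a genuine gap, and it is a concrete numerical one: you compute $\dim \p K^\perp = 4$, but since $\dim_{\mathbb{C}} \bigwedge^2 V^\vee = 10$ and $\dim_{\mathbb{C}} K = 6$, we have $\dim_{\mathbb{C}} K^\perp = 4$ and hence $\p K^\perp \cong \p^3$, a codimension-$6$ linear subspace of $\p(\bigwedge^2 V^\vee) = \p^9$. A transversal intersection with the $6$-dimensional $\g = \gr_2(V^\vee)$ therefore has dimension $6 + 3 - 9 = 0$, and by B\'ezout it consists of exactly $\deg \g = 5$ reduced points. There is no ``tension'' to resolve: the one-dimensional orthogonal section you describe (an elliptic normal quintic, with resonance a $\p^1$-bundle over it) is the situation of the \emph{previous} case $\dim K = 5$ (Proposition \ref{prop_n = 5, k = 5}), which you have conflated with the present one. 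Because of this confusion, your argument never actually \emph{establishes} that $\g(E)$ is finite of length $5$; instead you select that alternative because it yields the desired conclusion, which is circular.

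Your attempted patch via projective duality is also incorrect as stated: the projective dual of a smooth surface in $\p^5$ is generically a hypersurface in the dual $\p^5$, not a finite set of five points, and in any case $\g(E)$ is not the dual variety of $X$. The self-duality input needed here is exactly Theorem \ref{thm:LowDim_transversal_n=4,5} applied to the Grassmannian (not to $X$): transversality of $\p K \cap \g^\perp$ implies transversality of $\p K^\perp \cap \g$, and then the dimension count above does the rest. This is how the paper argues: adjunction and $\deg \g^\perp = 5$ give that $X$ is the quintic del Pezzo surface; Theorem \ref{thm:LowDim_transversal_n=4,5} gives that $\g(E)$ is $5$ distinct points; Proposition \ref{prop:dim_g(E)} then gives $h^0(X,L) = 2$ for all $L \in \w(E)$ and injectivity of $\gamma$; and Theorem \ref{thm:TransvConseq} yields the five disjoint lines. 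Your surrounding architecture (adjunction, the appeal to Corollary \ref{cor_E}, Propositions \ref{prop:dim_g(E)} and \ref{prop:LinearResonance}, disjointness via Proposition \ref{prop_gE}, and Theorem \ref{thm:TransvConseq}) matches the paper, and the five conic pencils you mention do in fact constitute $\w(E)$ (cf.\ Proposition \ref{prop_n = 5, k = 6_part_2}), but that identification is not needed for the count --- the number five comes solely from $\deg \g = 5$ together with transversality of the orthogonal section.
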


\begin{proof}
By the adjunction formula, $K_X = \cO_X(-1)$. Since the degree of $\g^\perp$ is $5$, the conclusion follows. Applying Theorem \ref{thm:LowDim_transversal_n=4,5}, we infer that $\g(E)$ consists of $5$ distinct points. Thus, by Proposition \ref{prop:dim_g(E)}, the map $\g(E) \longrightarrow \w(E)$ is one-to-one and hence, by Theorem \ref{thm:TransvConseq}, the resonance $\r(E)$ is a disjoint union of $5$ lines in $\p^4$.
\end{proof}

Recall that any quintic del Pezzo surface in $\p^5$ is a linear section of $\gr_2(\mathbb{C}^5)$, see, for example \cite{fujita_1981}, \cite[Proposition 8.5.1]{dolgachev_2012}.
%\cite{chung_conics_2024}. 
Projective self-duality of $\gr_2(\mathbb{C}^5)$ and Proposition \ref{prop_n = 5, k = 6_part_1} lead to the following answer to Questions \ref{ques:General} and \ref{ques:Generic} in the particular case $n=5$.

\begin{cor}
    Any resonance variety consisting of five disjoint lines in $\p^4$ is the resonance of a rank-two vector bundle on the smooth quintic del Pezzo surface.
\end{cor}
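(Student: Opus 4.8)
The plan is to run the construction of Proposition \ref{prop_n = 5, k = 6_part_1} in reverse: starting from the five disjoint lines, I would first recover the transversality of the relevant linear section and then transport it to ${\g^\perp}$ via the self-duality available for $n=5$. Write the given resonance as $\r(V,K)=\ell_1\sqcup\cdots\sqcup\ell_5\subset \p V^\vee=\p^4$, with $\dim V=5$. Since $\r(V,K)$ is the union of the lines $\p\Lambda$ attached to the points $\Lambda\in\g\cap\p K^\perp$, and since the assignment $\Lambda\mapsto\p\Lambda$ is injective, the decomposition into exactly five \emph{pairwise disjoint} lines forces $\g\cap\p K^\perp$ to consist of exactly five distinct points $\Lambda_1,\dots,\Lambda_5$, with $\ell_i=\p\Lambda_i$. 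In particular $\g\cap\p K^\perp$ is zero-dimensional, and the disjointness $\ell_i\cap\ell_j=\emptyset$ translates into $\Lambda_i\cap\Lambda_j=\{0\}$ in $V^\vee$.

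Next I would pin down $\dim K$. Finiteness of $\g\cap \p K^\perp$, together with $\dim\g=6$ inside $\p(\bigwedge^2V^\vee)=\p^9$, forces $\dim \p K^\perp\le 3=\codim_{\p^9}\g$ by the projective dimension theorem. Conversely, the pairwise skewness $\Lambda_i\cap\Lambda_j=\{0\}$ makes the corresponding decomposable Pl\"ucker vectors span a $4$-dimensional subspace of $K^\perp$ (a routine linear-algebra check: any two skew planes wedge to a nonzero element of $\bigwedge^4V^\vee$, which rules out a degenerate span), so $\dim\p K^\perp\ge 3$. Hence $\dim\p K^\perp=3$, that is $\dim K^\perp=4$ and $\dim K=6$, placing us in the numerical range of Proposition \ref{prop_n = 5, k = 6_part_1}.

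The conceptual crux is transversality of $\g\cap\p K^\perp$. Because $\p K^\perp$ now has dimension complementary to $\g$ and meets it in a finite (hence proper) scheme, B\'ezout gives $\sum_i \operatorname{mult}_{\Lambda_i}(\g\cap\p K^\perp)=\deg\g=5$; since we have already exhibited five distinct points, each multiplicity equals $1$, so the intersection is reduced and transverse at every point. This is exactly transversality of $\p K^\perp\cap\g$. Applying the self-duality Theorem \ref{thm:LowDim_transversal_n=4,5} (both $\p K^\perp\cap\g$ and $\p K\cap{\g^\perp}$ being non-empty), I conclude that $X:=\p K\cap{\g^\perp}$ is a transversal linear section as well; it is positive-dimensional, a surface, since $\dim X\ge 6+5-9=2$.

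Finally I would feed this into the established machinery: by Proposition \ref{prop_n = 5, k = 6_part_1} (equivalently, Theorem \ref{thm_E} together with Corollary \ref{cor_E}), the transversal section $X=\p K\cap{\g^\perp}$ with $\dim K=6$ is a smooth quintic del Pezzo surface, the bundle $E=\cQ|_X$ is of rank two with $H^0(X,E)\cong V^\vee$ and $K^\perp=\ker(\det)$, and its resonance satisfies $\r(E)=\r(V,K)=\ell_1\sqcup\cdots\sqcup\ell_5$, which is the desired conclusion. The main obstacle I anticipate is precisely the transversality step: one must be certain that the five \emph{distinct} points account for the \emph{entire} degree-$5$ intersection scheme, and it is here that the disjointness of the lines and the equality $\deg \gr_2(\c^5)=5$ combine; the accompanying span computation guaranteeing $\dim K=6$ is routine but must be recorded to remain within the hypotheses of Proposition \ref{prop_n = 5, k = 6_part_1}.
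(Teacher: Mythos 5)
Your overall architecture is exactly the paper's (which compresses the corollary into one sentence): recover the five points of $\g\cap\p K^\perp$ from the five disjoint lines, deduce transversality of $\p K^\perp\cap\g$, transfer it to $\p K\cap\g^\perp$ via the self-duality Theorem \ref{thm:LowDim_transversal_n=4,5}, and conclude with Proposition \ref{prop_n = 5, k = 6_part_1} (or Theorem \ref{thm_E} and Corollary \ref{cor_E}). Your B\'ezout step is sound and is in fact recorded in the paper as the unnumbered remark after Definition \ref{def:trans}: a finite $\g(E)$ consisting of precisely $\deg(\g)$ points, in complementary dimension, forces transversality.

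There is, however, a genuine flaw in your dimension count: the parenthetical ``routine linear-algebra check'' is false. Pairwise skewness of the five planes does \emph{not} imply that their Pl\"ucker vectors span a $4$-dimensional subspace. Concretely, set $\omega_t=(e_1+te_3)\wedge(e_2+te_4)$ for $t\in\c$; then $\omega_s\wedge\omega_t=(s-t)^2\,e_1\wedge e_2\wedge e_3\wedge e_4\neq 0$ for $s\neq t$, so any five of these planes are pairwise skew, yet every $\omega_t=e_1\wedge e_2+t(e_1\wedge e_4-e_2\wedge e_3)+t^2\,e_3\wedge e_4$ lies in a fixed \emph{three}-dimensional subspace: the five points sit on a conic inside $\g$ spanning only a $\p^2$ (this is precisely the conic $\g(\cO_{\p^1}(1)\oplus\cO_{\p^1}(1))$ from Proposition \ref{prop:p1transv}, viewed in $\gr_2(\c^5)$ via $\c^4\subset\c^5$). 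What pairwise wedge-nonvanishing actually yields is that no two of the $\omega_i$ coincide and --- combined with decomposability, $\omega\wedge\omega=0$ --- that no three are collinear; coplanarity of all five is not excluded. The gap is reparable, but only by invoking once more the hypothesis that $\g\cap\p K^\perp$ is \emph{exactly} the five points: if the $\omega_i$ spanned only a plane $\p W\subseteq\p K^\perp$, then the five points of $\p W\cong\p^2$, having no three collinear, impose independent conditions on conics, so the conics through them form a one-dimensional vector space generated by a unique smooth conic $Q$; every Pl\"ucker quadric cutting out $\g$ restricts on $\p W$ to a multiple of the equation of $Q$, whence $\g\cap\p W\supseteq Q$ is infinite, contradicting finiteness (in such a configuration the resonance would be a quadric scroll, not five lines; a line through all five points is excluded even more easily). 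With this repair the span is a $\p^3$, your upper bound $\dim\p K^\perp\le 3$ then gives $\dim K^\perp=4$, i.e.\ $\dim K=6$, and the remainder of your argument goes through unchanged.
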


Let us now give a description of the bundle $E$ in the context of Proposition \ref{prop_n = 5, k = 6_part_1}. It is well-known (see, for example, \cite[Chapter 8]{dolgachev_2012}) that a smooth del Pezzo surface $X$ of degree $5$ in $\p^5$ is a blow-up of $\p^2$ in $4$ general points $X \xlongrightarrow{\sigma} \p^2$ and it is embedded in $\p^5$ by the system of cubics passing through these points. If $E_1, \ldots, E_4$ are the exceptional curves, then $\sigma^*\cO_{\p^2}(2) \otimes \cO_X(-E_1 - E_2 - E_3 - E_4)$ and $\sigma^*\cO_{\p^2}(1) \otimes \cO_X(-E_i)$, for $i = \overline{1,4}$, are the only base point free pencils on $X$ (see, for example \cite[Section 2]{dolgachev_geometry_2018}). 

\begin{prop} \label{prop_n = 5, k = 6_part_2}
Let $K\subset \bigwedge^2V$ of dimension $6$ such that $X = \p K \cap {\g^\perp}$ is transversal. Then $X \xlongrightarrow{\sigma} \p^2$ is a blow-up of $\p^2$ in $4$ general points and $E$ is the unique nontrivial extension
\begin{equation} \label{eq:E_delPezzo}
0 \longrightarrow \sigma^*\cO_{\p^2}(2) \otimes \cO_X(-E_1 - E_2 - E_3 - E_4) \longrightarrow E \longrightarrow \sigma^*\cO_{\p^2}(1) \longrightarrow 0
\end{equation}
\end{prop}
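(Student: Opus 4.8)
The plan is to realise $E$ concretely as an extension: first exhibit the conic pencil $P:=\sigma^*\cO_{\p^2}(2)\otimes\cO_X(-E_1-E_2-E_3-E_4)$ as a saturated sub-line bundle of $E$ with locally free quotient, and then pin down the isomorphism class via a one-dimensional $\operatorname{Ext}$-group. Throughout write $H=\sigma^*\cO_{\p^2}(1)$, so that by Theorem~\ref{thm_E} and the adjunction computation in Proposition~\ref{prop_n = 5, k = 6_part_1} we have $\det E=-K_X=3H-E_1-E_2-E_3-E_4$, while Proposition~\ref{prop_n = 5, k = 6_part_1} also tells us that $\g(E)$ consists of five reduced points, that every saturated subpencil of $E$ has exactly two independent sections, and that $\r(E)$ is a disjoint union of five lines. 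Recall finally that $E$ is globally generated with $h^0(X,E)=5$.

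First I would produce a nonzero map $P\to E$. Schubert calculus on $\g^\perp=\gr_2(V)$ gives $c_2(E)=\sigma_{1,1}\cdot\sigma_1^{4}=2$, where $\sigma_{1,1}=c_2(\cQ)$ and $\sigma_1^{4}$ is the class of the codimension-four linear section $X$. Setting $F:=E(-P)$, a direct Chern-class computation yields $c_1(F)=-H+E_1+E_2+E_3+E_4$ and $c_2(F)=c_2(E)-c_1(E)\cdot P+P^2=0$, whence Riemann--Roch on the surface gives $\chi(F)=1$. To turn this Euler characteristic into sections I would kill $H^2$: by Serre duality $H^2(X,F)\cong H^0(X,E^\vee(-H))^\vee$, and dualising the surjection $V^\vee\otimes\cO_X\twoheadlongrightarrow E$ embeds $E^\vee$ into $\cO_X^{\oplus 5}$, so $E^\vee(-H)\hookrightarrow\cO_X(-H)^{\oplus5}$ has no global sections. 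Hence $h^0(X,E(-P))\ge\chi(F)=1$, producing a nonzero, and therefore injective, sheaf map $P\hookrightarrow E$.

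Next I would show that $P$ is already saturated, with locally free quotient $H$. Let $\overline P=P^{\mathrm{sat}}=P\otimes\cO_X(A)$ for some effective $A\ge0$; since $\overline P$ is a saturated subpencil, Proposition~\ref{prop_n = 5, k = 6_part_1} (through Proposition~\ref{prop:dim_g(E)}) gives $h^0(\overline P)=2$. From the defining sequence $0\to\overline P\to E\to\cI_Z(\overline M)\to0$ with $\overline M=-K_X-\overline P=H-A$ one obtains $h^0(\cI_Z(\overline M))\ge h^0(X,E)-h^0(\overline P)=3$, while $h^0(\cI_Z(\overline M))\le h^0(H-A)\le h^0(H)=3$; hence all three numbers equal $3$. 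Because $H$ is base-point-free, $h^0(H-A)=h^0(H)$ forces $A=0$ and $h^0(\cI_Z(H))=h^0(H)$ forces $Z=\emptyset$. Thus $\overline P=P$ and the sequence becomes $0\to P\to E\to H\to0$.

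Finally I would identify the extension. It is non-split: otherwise $H$ would be a saturated sub-line bundle of $E$ with $h^0(H)=3$, forcing the plane $\p H^0(H)$ to lie inside $\r(E)$, which contradicts $\r(E)$ being a union of lines. The extensions of $\sigma^*\cO_{\p^2}(1)$ by $\sigma^*\cO_{\p^2}(2)\otimes\cO_X(-E_1-E_2-E_3-E_4)$ are classified by $\operatorname{Ext}^1(\cO_X(H),\cO_X(P))\cong H^1(X,H-E_1-E_2-E_3-E_4)$, and Riemann--Roch together with $h^0(H-E_1-E_2-E_3-E_4)=0$ (no line passes through four general points) and $h^2=h^0(K_X-(H-E_1-E_2-E_3-E_4))=0$ shows this group is one-dimensional. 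Hence the non-split extension is unique up to isomorphism, and $E$ must be it. I expect the main obstacle to be the first stage, namely guaranteeing the embedding $P\hookrightarrow E$ and controlling its saturation, since that is precisely where the global generation of $E$, the value $c_2(E)=2$, and the base-point-freeness of $H$ have to be combined; the remaining extension-theoretic steps are then formal.
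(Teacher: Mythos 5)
Your proof is correct and follows the same overall strategy as the paper's: realize $E$ as an extension of $H=\sigma^*\cO_{\p^2}(1)$ by the conic pencil $P=\sigma^*\cO_{\p^2}(2)\otimes\cO_X(-E_1-E_2-E_3-E_4)$, rule out splitting using $h^0(L)=2$ for all $L\in\w(E)$, and conclude uniqueness from the one-dimensionality of $\operatorname{Ext}^1(\cO_X(H),\cO_X(P))\cong H^1(X,H-E_1-E_2-E_3-E_4)$. The genuine difference is in how the exact sequence is produced: the paper merely asserts that the extension ``readily follows'' from $c_1(E)=-K_X$ and $c_2(E)=2$, resting implicitly on the identification of $\w(E)$ with the five base-point-free pencils, whereas you construct the inclusion $P\hookrightarrow E$ by a self-contained cohomological argument --- Riemann--Roch gives $\chi(E(-P))=1$, Serre duality together with the embedding $E^\vee\hookrightarrow\cO_X^{\oplus 5}$ (dual to global generation) kills $h^2(E(-P))$, and then the base-point-freeness of $H$ forces both the saturation divisor $A$ and the residual cycle $Z$ to vanish, so the quotient is the line bundle $H$ itself. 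This buys a verifiable existence proof for precisely the step the paper leaves to the reader, at the cost of some extra Chern-class bookkeeping (all of which checks out: $c_2(E)=\sigma_{1,1}\cdot\sigma_1^4=2$, $c_2(E(-P))=0$, and $\chi(H-E_1-\cdots-E_4)=-1$ with $h^0=h^2=0$). The only detail you should make explicit is the immediate verification that $h^0(P)=2$ (conics through four general points), since you need $\overline P=P^{\mathrm{sat}}$ to be a sub\emph{pencil} before invoking Proposition~\ref{prop:dim_g(E)} to get $h^0(\overline P)=2$; with that line inserted, every step is sound.
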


\begin{proof}
By the discussion above, $c_1(E) = \sigma^*\cO_{\p^2}(2) \otimes \cO_X(-E_1-E_2-E_3-E_4)$. Also, $c_2(E) = 2$, by standard Schubert calculus, see \cite[Section 5.6]{eisenbud_3264_2016}. Then, it readily follows that $E$ lies in an extension (\ref{eq:E_delPezzo}). Since $h^0(X,E) = 5$ and $h^0(X,L) = 2$, for any $L \in \w(E)$, it follows that $E$ is not split. Now, by a simple computation, $h^1(\sigma^*\cO_{\p^2}(1) \otimes \cO_X(-E_1 - E_2 - E_3 - E_4)) = 1$, showing the uniqueness of the extension (\ref{eq:E_delPezzo}). We also note that $\w(E)$ is formed by the five pencils on $X$.
\end{proof}

\subsection{Dimension 6 case}

In the sequel, $n = \dim V = 6$. Then $\g^\perp = \gr_2(V)$ will be an $8$-dimensional smooth projective variety of degree $14$ in $\p^{14}$. Its projective dual is a cubic hypersurface in $\p(\bigwedge^2 V^\vee)$, whose singular locus is $\g = \gr_2( V^\vee)$.  We summarize the main results of \cite{mukai_curves_1992} in the following theorem.

\begin{thm}[Mukai] \label{thm_mukai}
(a) A one-dimensional transversal linear section of ${\g^\perp}$ is a canonical curve $C$ of genus $8$ without $g^2_7$. \label{thm_a} \\
(b) Any canonical curve $C$ of genus $8$ without $g^2_7$ can be obtained as a transversal linear section of $\g^\perp \subset \p^{14}$, as follows. Up to an isomorphism, there is a unique stable rank-two bundle $E_M$ on $C$ with canonical determinant and precisely $6$ independent global sections. Moreover, the determinant map of $E_M$ is surjective. Setting $V:= H^0(E_M)^\vee$ and $K_M:= H^0(\omega_C)^\vee \hooklongrightarrow \bigwedge^2 V$ there is an isomorphism $C \cong \p K_M \cap \g^\perp$.  \\
(c) Furthermore, if $L$ is a $g^1_5$ of $C$ and $M = K_C \otimes L^\vee$ is the Serre adjoint of $L$, then $E_M$ is the unique nontrivial extension
\[
0 \longrightarrow L \longrightarrow E_M \longrightarrow M \longrightarrow 0 \ .
\]
(d) If $\det$ denotes the determinant map of $E_M$, then there is a bijection between the intersection $\p \operatorname{Ker}(\det) \cap \g = \p K_M^\perp \cap \g$ and the finite set $W^1_5(C)$ of $g^1_5$'s on the curve $C$.
\end{thm}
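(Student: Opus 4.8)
My plan is to treat the statement as Mukai's two–way description of genus–eight canonical curves, so first I would set up the dictionary between globally generated rank-two bundles $E$ on $C$ with $\det E=K_C$ and $h^0(E)=6$, and canonically embedded transversal curve sections of $\g^\perp=\gr_2(V)$ with $V=H^0(E)^\vee$. In one direction, such an $E$ has a classifying morphism $\phi_E\colon C\to\gr_2(V)$, $p\mapsto E(p)^\vee$, with $\phi_E^*\cQ\cong E$ and $\phi_E^*\o(1)\cong\det E=K_C$; composed with Plücker this is the canonical map, so $\phi_E$ embeds $C$ linearly in $\g^\perp$. The reverse passage (section $\mapsto$ bundle $\cQ|_X$, together with $H^0\cong V^\vee$ and $K^\perp=\ker(\det)$) is already supplied by Theorem \ref{thm_E} and Corollary \ref{cor_E}.

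\emph{Part (a).} I would apply adjunction on $\g^\perp$: since $\dim\g^\perp=8$ and $K_{\g^\perp}=\o_{\g^\perp}(-6)$, a transversal codimension-seven section $C$ satisfies $K_C=\o_C(1)$, and $\deg\g^\perp=14$ gives $\deg C=14=2g-2$, hence $g=8$ with $C$ canonically embedded. The absence of a $g^2_7$ is best seen as one half of an equivalence: it is exactly the condition under which $\cQ|_C$ is stable. Indeed $\cQ|_C$ has $h^0=6\ge5$ and canonical determinant, so once $\g(\cQ|_C)$ is known to be finite, Proposition \ref{prop:StabCurve} forces stability; a $g^2_7=D$ would embed as a sub-line bundle of degree $7=\mu(\cQ|_C)$, contradicting stability. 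Thus (a) and (b) are the two directions of one correspondence.

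\emph{Parts (b), (c).} Starting from $C$ of genus $8$ without $g^2_7$, I fix a $g^1_5$, $L$, and set $M=K_C\otimes L^\vee$, a $g^3_9$ with $h^0(M)=4$. I would then analyze extensions $0\to L\to E\to M\to0$, whose classes lie in $\operatorname{Ext}^1(M,L)=H^1(L^{\otimes2}\otimes K_C^{-1})$, of dimension $11$. One has $h^0(E)=6=h^0(L)+h^0(M)$ precisely when the connecting map $H^0(M)\to H^1(L)\cong H^0(M)^\vee$ vanishes; by Serre duality this map is the symmetric form $s\otimes t\mapsto\langle st,e\rangle$ factoring through the multiplication $\operatorname{Sym}^2H^0(M)\to H^0(M^{\otimes2})$, whose source has dimension $10$ and target dimension $11$. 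The admissible classes are therefore the annihilator of the image, and when that multiplication is injective the image has codimension one, producing a \emph{unique} nontrivial extension $E_M$ with $h^0(E_M)=6$ — this injectivity is exactly the no-$g^2_7$ hypothesis. Stability and global generation of $E_M$ follow as in Proposition \ref{prop:general_curve_Mukai}: a destabilizing sub-line bundle would be a degree-$7$ pencil with several sections, i.e. a $g^2_7$, excluded by hypothesis, so Proposition \ref{prop:StabCurve} applies. Finally, the classifying morphism $\phi_{E_M}$ realizes the canonical $C$ as a linear section $\p K_M\cap\g^\perp$ with $K_M=H^0(\omega_C)^\vee\hookrightarrow\bigwedge^2V$, and Theorem \ref{thm_E} applied to this section certifies the scheme-theoretic equality and transversality.

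\emph{Part (d), and the main obstacle.} Here $\g=\gr_2(H^0(E_M))$ and $\p\ker(\det)\cap\g=\p K_M^\perp\cap\g$ is the set of two-planes $\Lambda\subset H^0(E_M)$ whose sections generate a rank-one subsheaf, that is, it is $\g(E_M)$. By Proposition \ref{prop:general_curve_Mukai} with $k=4$, every saturated subpencil of $E_M$ is a $g^1_5$ with $h^0=2$, so $\g(E_M)=W^1_5(E_M)=W^1_5(C)$; Proposition \ref{prop:dim_g(E)} makes $\gamma$ a bijection, yielding the stated bijection with the fourteen $g^1_5$'s. The single real difficulty in the whole scheme is the equivalence \emph{no $g^2_7$} $\Longleftrightarrow$ \emph{the multiplication $\operatorname{Sym}^2H^0(M)\to H^0(M^{\otimes2})$ is injective}, i.e. the $|M|$-model of $C$ lies on no quadric in $\p^3$: this one condition simultaneously controls the one-dimensionality of the good extension classes, the stability of $E_M$, and the transversality of the section. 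Establishing it rigorously — together with the independence of $E_M$ from the chosen $g^1_5$ and the projective normality underlying $C=\p K_M\cap\g^\perp$ — is precisely where Mukai's study of the Pfaffian cubic hypersurface and its singular locus $\g$ becomes indispensable.
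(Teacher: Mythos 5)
The first thing to note is that the paper contains no proof of this statement: Theorem \ref{thm_mukai} is explicitly a \emph{summary} of results from \cite{mukai_curves_1992} (``We summarize the main results of \cite{mukai_curves_1992} in the following theorem''), so there is no internal argument to compare against. Judged on its own terms, your sketch does reconstruct the genuine skeleton of Mukai's construction: the adjunction and degree computation in (a) is correct ($K_{\g^\perp}=\o_{\g^\perp}(-6)$, $\deg C = 14 = 2g-2$, so $g=8$ canonically embedded), and in (b)--(c) your numerics are exactly right --- $M=K_C\otimes L^\vee$ is a $g^3_9$ with $h^0(M)=4$, $\operatorname{Ext}^1(M,L)\cong H^0(M^{\otimes 2})^\vee$ has dimension $11$, the connecting map is the symmetric form $(s,t)\mapsto\langle st,e\rangle$, and injectivity of $\operatorname{Sym}^2H^0(M)\to H^0(M^{\otimes 2})$ (dimensions $10$ and $11$) forces a one-dimensional space of admissible extension classes. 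This is indeed how $E_M$ is built.

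Nevertheless, as a proof the proposal has genuine gaps, one of them structural. First, invoking Theorem \ref{thm_E} to ``certify'' that $\phi_{E_M}$ realizes $C$ as a transversal linear section is circular: Theorem \ref{thm_E} takes a transversal positive-dimensional section as \emph{hypothesis}; it cannot show that the image of the classifying map is linearly embedded by $\p K_M$, that the intersection $\p K_M\cap\g^\perp$ is exactly $C$ (rather than larger), or that it is transversal --- and this identification is the heart of part (b). Second, part (d) leans on Proposition \ref{prop:general_curve_Mukai}, but the paper's proof of that proposition itself cites Mukai's Proposition 3.1 (every $g^1_5$ embeds in $E_M$), so the fourteen-point bijection is borrowed from the very theorem under proof; similarly, in (a) the assertion that a $g^2_7$ ``would embed as a sub-line bundle of degree $7$'' of $\cQ|_C$ is unjustified (one must show $h^0(\cQ|_C\otimes A^\vee)\neq 0$ for such an $A$), and your stability argument via Proposition \ref{prop:StabCurve} presupposes finiteness of $\g(\cQ|_C)$, which you have not established independently --- stability and finiteness prop each other up. Finally, the pivotal equivalence (no $g^2_7$ $\Leftrightarrow$ injectivity of $\operatorname{Sym}^2H^0(M)\to H^0(M^{\otimes 2})$), the independence of $E_M$ from the chosen $g^1_5$ (i.e.\ the uniqueness claim), global generation, and the surjectivity of the determinant map are all explicitly deferred by you to Mukai's analysis of the Pfaffian cubic. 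You are commendably honest about this, but it means the proposal is a well-informed scaffold around the citation --- matching the paper, which also simply cites \cite{mukai_curves_1992} --- rather than a self-contained proof.
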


We further remark that a canonical curve of $C \subset \p^7$ of genus $8$ is non-degenerate, so, if it has no $g^2_7$, there exists a unique $7$-dimensional linear space in $\p^{14}$ cutting out the Grassmannian $G^\perp$ along $C$.  Also, as noticed for instance in \cite{mukai_2022}, such a curve $C$ is Brill-Noether-Petri general and hence, the set $W^1_5(C)$ is smooth and consists of $14$ points, see \cite{Griffiths-Harris_1980}. 

We connect Mukai's results with our approach as follows.

\begin{prop}
\label{prop:E=E_M}
Let $K\subset \bigwedge^2V$ of dimension $8$ such that $X = \p K \cap {\g^\perp}$ is transversal. Then $X$ is a canonical curve of genus $8$ without $g^2_7$, $E$ is the rank-two bundle $E_M$ defined in Theorem \ref{thm_mukai} and the resonance $\r(E)$ is a disjoint union of $14$ lines in $\p^5$.
\end{prop}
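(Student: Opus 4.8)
The plan is to derive the three assertions in stages, with Mukai's Theorem \ref{thm_mukai} as the essential input. Since $\dim V = 6$ and $X = \p K \cap \g^\perp$ is transversal, it is smooth of dimension $\dim\g^\perp + \dim\p K - \dim\p(\bigwedge^2 V) = 8 + 7 - 14 = 1$, and Theorem \ref{thm_mukai}(a) immediately identifies $X$ with a canonical curve of genus $8$ carrying no $g^2_7$. Theorem \ref{thm_E} then supplies all the initial data for $E = \cQ|_X$: a natural isomorphism $H^0(X,E)\cong V^\vee$ (so $h^0(X,E)=6$), the equality $\det E = \o_X(1) = K_X$, the global generation of $E$, and the identification of $K^\perp$ with $\ker(\det)$. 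It remains to prove that $E$ is the Mukai bundle $E_M$ and to read off the resonance from Theorem \ref{thm_mukai}(d).

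The central step is the identification $E = E_M$. By Theorem \ref{thm_mukai}(b), $E_M$ is the \emph{unique} stable rank-two bundle on $X$ with canonical determinant and six independent sections, so it suffices to prove that $E$ is stable; equivalently, by Proposition \ref{prop:StabCurve}(i), that $\dim\g(E)\le 1$. I would first reduce the possible destabilizers by a Brill--Noether estimate. Suppose $A\subset E$ is a saturated sub-line bundle with $\deg A \ge \mu(E) = 7$, and let $B = K_X\otimes A^{-1}$ be the locally free quotient, of degree $14 - \deg A$. As $E$ is globally generated, so is $B$; hence $B$ is a base-point-free pencil and Brill--Noether theory on the general genus-$8$ curve forces $\deg B \ge 5$, i.e. $\deg A \le 9$ (the degenerate case $\deg B\le 0$ is incompatible with $h^0(A)\le h^0(E)=6$). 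Combining $6 = h^0(E)\le h^0(A)+h^0(B)$ with the Serre-duality relation $h^0(A) - h^0(B) = \deg A - 7$ yields $h^0(B)\ge (13-\deg A)/2$. For $\deg A = 7$ this gives $h^0(B)\ge 3$ with $\deg B = 7$, a $g^2_7$, contradicting Theorem \ref{thm_mukai}(a); for $\deg A = 8$ it gives a $g^2_6$, which on adding a base point again produces a $g^2_7$, a contradiction.

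The remaining case $\deg A = 9$ is the main obstacle: here $A$ is a $g^3_9$ and $B = K_X\otimes A^{-1}$ its Serre-adjoint $g^1_5$, a configuration not excluded by Brill--Noether alone. The relevant consequence is that such an $A$ would make $\g(E)$ large: since $h^0(X,A)=4$ and $A\subset E$, every $\Lambda\in\gr_2(H^0(X,A))$ generates a rank-one subsheaf of $E$, so $\gr_2(H^0(X,A))\cong\gr_2(\c^4)\hooklongrightarrow\g(E)$ and $\dim\g(E)\ge 4$. I would exclude this via Mukai's analysis: by Theorem \ref{thm_mukai}(c)--(d) together with \cite[Proposition 3.1]{mukai_curves_1992} (the input already used in Proposition \ref{prop:general_curve_Mukai}), the Mukai bundle has the $g^1_5$'s as its \emph{only} saturated subpencils and $\p K_M^\perp\cap\g = W^1_5(X)$ is finite, so a $\gr_2(\c^4)$ cannot sit inside $\g(E)$. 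Concretely, the uniqueness of the $7$-plane cutting $\g^\perp$ along the canonical curve $X$ identifies our section with Mukai's, whence $E = \cQ|_X = E_M$ and the $g^3_9$ cannot occur. Thus $E$ is stable, $E = E_M$, and $\g(E)$ is finite. This degree-$9$ exclusion, i.e. the bound $\dim\g(E)\le 1$, is exactly the point where Mukai's foundational results (and the hard direction of the transversality duality) are indispensable.

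Finally, once $E = E_M$, Theorem \ref{thm_mukai}(d) identifies $\g(E) = \p K^\perp\cap\g$ with $W^1_5(X)$, which for the Brill--Noether--Petri general curve $X$ consists of $14$ distinct reduced points. By Proposition \ref{prop:general_curve_Mukai} each $L\in\w(E) = W^1_5(X)$ satisfies $h^0(X,L)=2$ and $h^0(E(-L))=1$, so by Proposition \ref{prop:ResSegre}(i) (equivalently Theorem \ref{thm:TransvConseq}(ii)) the fibre of $\rho$ over $L$ is the line $\p H^0(X,L)=\p^1$ sitting in $\p H^0(X,E)=\p^5$. Since $\r(E)$ is the set-theoretic disjoint union of the spaces $\p H^0(X,L)$ over saturated subpencils $L$, it is precisely a disjoint union of $14$ lines in $\p^5$; by Corollary \ref{cor_E} this variety is also $\r(V,K)$.
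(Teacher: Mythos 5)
Your proposal is correct, and at its crux it coincides with the paper's proof: both arguments turn on the uniqueness of the $7$-plane cutting $\g^\perp$ along the canonical curve $X$, which yields $K = K_M$, and then on Theorem \ref{thm_mukai}(d) to conclude that $\g(E) = \p K^\perp \cap \g = \p K_M^\perp \cap \g$ is finite (the $14$ points of $W^1_5(X)$, since $X$ is Brill--Noether--Petri general); stability of $E$ then delivers $E = E_M$ via the uniqueness clause of Theorem \ref{thm_mukai}(b), and the resonance is the disjoint union of the $14$ lines $\p H^0(X,L)$. Where you genuinely differ is in how stability is extracted. The paper does it in one stroke: finiteness of $\g(E)$ forces $h^0(X,L) = 2$ for all $L \in \w(E)$ (Proposition \ref{prop:dim_g(E)}), and Proposition \ref{prop:StabCurve}(i) --- whose proof bounds $\deg(A) \le (g-1) - h^0(E) + 4 = 5 < 7 = \mu(E)$ for every saturated sub-line bundle $A \subset E$ --- yields stability for all degrees at once. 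You instead run a degree-by-degree Brill--Noether analysis, eliminating $\deg A \in \{7,8\}$ by producing a $g^2_7$ (a self-contained argument that does not need Mukai and nicely isolates the genuinely hard case), and you reduce $\deg A = 9$ --- the Serre adjoint $g^3_9$ of a $g^1_5$, with $\rho = 0$, which Brill--Noether theory indeed cannot exclude --- to the finiteness of $\g(E)$ via the embedding $\gr_2(H^0(X,A)) \cong \gr_2(\mathbb{C}^4) \hookrightarrow \g(E)$; that mechanism is correct, but the casework for degrees $7$ and $8$ is redundant once finiteness of $\g(E)$ is in hand. One sentence of yours overreaches: the claim that the $7$-plane uniqueness identifies the sections ``whence $E = \cQ|_X = E_M$'' directly is not licensed by Theorem \ref{thm_mukai}(b) as stated, which asserts only an isomorphism $X \cong \p K_M \cap \g^\perp$ of linear sections, not that this isomorphism carries $\cQ|_X$ to $E_M$ (that would require knowing Mukai's embedding is the one induced by the globally generated bundle $E_M$, so that $\cQ$ pulls back to $E_M$); this is precisely why the paper --- and, in effect, your own argument --- routes the identification $E = E_M$ through stability plus Mukai's uniqueness of the stable bundle rather than through the section identification. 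Since your degree-$9$ exclusion and final conclusion do in fact pass through stability, this misstep is a detour, not a gap.
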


\begin{proof}
By Theorem \ref{thm_mukai}, $X$ will be a canonical curve in $\p^7$ without $g^2_7$. Then, by its definition, we see that $E$ has canonical determinant and $h^0(X,E) = 6$. To conclude, it remains to show that $E$ is stable. By the discussion above, $K = K_M$ and hence the resonance of $E$, which is $\r(V,K)$ coincides with the resonance of $E_M$, which is a disjoint union of $14$ lines. Therefore, $\g(E)$ consists of $14$ points, ensuring that $h^0(L) = 2$, for all $L \in \w(E)$. Then, by Proposition \ref{prop:StabCurve}, $E$ is stable, and hence $E = E_M$.
\end{proof}

Mukai's Theorem \ref{thm_mukai} implies tranversality in one direction, i.e. if $\p K \cap \g^\perp$ is transversal, then $\p K^\perp \cap \g$ is transversal. Next we focus on showing transversality in the opposite direction. In order to do so, let us introduce the following subvariety:
\[
\cD = \left\{K\in \gr_8(\textstyle{\bigwedge^2V)}: \p K \ \text{and}\ \g^\perp \ \text{do not intersect transversely} \right\}
\]

\begin{lem} \label{lem_ired_d}
$\cD$ is an irreducible subvariety of $\gr_8(\bigwedge^2V)$.
\end{lem}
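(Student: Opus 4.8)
The plan is to realize $\cD$ as the image of an incidence variety that fibers over the homogeneous space $\g^\perp=\gr_2(V)$, and to prove that this incidence variety is irreducible by exhibiting it as a tower of Grassmann bundles. First I would translate the non-transversality condition into linear algebra. Fix a decomposable vector $\omega=v_1\wedge v_2$ representing a point $p=[\omega]\in\g^\perp\subset\p(\bigwedge^2V)=\p^{14}$, and let $\widehat{T}_\omega=v_1\wedge V+v_2\wedge V\subseteq\bigwedge^2V$ be the affine (cone) tangent space; a direct computation gives $\dim\widehat{T}_\omega=9$ and $\omega\in\widehat{T}_\omega$. For an $8$-dimensional subspace $K$ with $\omega\in K$, the projective tangent spaces $T_p\p K$ and $T_p\g^\perp$ span $T_p\p^{14}$ precisely when $K+\widehat{T}_\omega=\bigwedge^2V$; since $\dim K+\dim\widehat{T}_\omega=17$, this happens exactly when $\dim(K\cap\widehat{T}_\omega)=2$, the minimal possible value. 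Hence $\p K$ and $\g^\perp$ fail to meet transversely at $p$ if and only if $\omega\in K$ and $\dim(K\cap\widehat{T}_\omega)\ge 3$.

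Accordingly, I would set
\[
\widetilde{\cD}=\bigl\{(K,p)\in\gr_8(\textstyle\bigwedge^2 V)\times\g^\perp:\ \omega\in K,\ \dim(K\cap\widehat{T}_\omega)\ge 3\bigr\}.
\]
These conditions cut out a closed subset (the incidence $\omega\in K$ is closed, and $K\mapsto\dim(K\cap\widehat{T}_\omega)$ is upper semicontinuous), so $\widetilde{\cD}$ is projective and its image $\cD$ under the projection to $\gr_8(\bigwedge^2V)$ is closed; moreover $\cD$ is irreducible as soon as $\widetilde{\cD}$ is, being its image under a morphism. It therefore suffices to prove that $\widetilde{\cD}$ is irreducible.

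To this end I would build a tower over $\g^\perp$. The affine tangent spaces $\widehat{T}_\omega$ are the fibers of a rank-$9$ homogeneous subbundle $\mathcal{T}\subseteq\bigwedge^2V\otimes\cO_{\g^\perp}$ containing the tautological line bundle $\cO_{\g^\perp}(-1)$. Let $\mathcal{F}=\gr_2(\mathcal{T}/\cO_{\g^\perp}(-1))$, which parametrizes pairs $(K',p)$ with $\omega\in K'\subseteq\widehat{T}_\omega$ and $\dim K'=3$; being a Grassmann bundle over the irreducible base $\g^\perp$, the space $\mathcal{F}$ is irreducible. Over $\mathcal{F}$, with $\mathcal{K}'$ the universal rank-$3$ subsheaf, let $\widehat{\Sigma}=\gr_5\bigl((\bigwedge^2V\otimes\cO_{\mathcal{F}})/\mathcal{K}'\bigr)$, parametrizing triples $(K',K,p)$ with $K'\subseteq K$ and $\dim K=8$; again a Grassmann bundle over an irreducible base, so $\widehat{\Sigma}$ is irreducible. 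Finally, the forgetful map $\widehat{\Sigma}\to\widetilde{\cD}$, $(K',K,p)\mapsto(K,p)$, is surjective: if $(K,p)\in\widetilde{\cD}$ then $\omega\in K\cap\widehat{T}_\omega$ and $\dim(K\cap\widehat{T}_\omega)\ge 3$, so one may choose a $3$-dimensional $K'$ with $\omega\in K'\subseteq K\cap\widehat{T}_\omega$. Thus $\widetilde{\cD}$, being the image of the irreducible variety $\widehat{\Sigma}$, is irreducible, and hence so is $\cD$.

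The step I expect to require the most care is the passage from fiberwise irreducibility to irreducibility of the total incidence space; this is what forces the tower-of-Grassmann-bundles formulation rather than a naive fiber-dimension count. The argument is clean precisely because $\mathcal{F}$ and $\widehat{\Sigma}$ are genuine Grassmann bundles, Zariski-locally trivial over irreducible bases — equivalently, because $GL(V)$ acts transitively on $\g^\perp$ and the entire construction is $GL(V)$-equivariant, so $\widetilde{\cD}$ is an associated fiber bundle with irreducible fiber over the irreducible homogeneous base. The secondary technical point is the tangent space computation identifying $\widehat{T}_\omega$ and reducing transversality to the numerical condition $\dim(K\cap\widehat{T}_\omega)\ge 3$, which I would verify directly from $\widehat{T}_\omega=v_1\wedge V+v_2\wedge V$.
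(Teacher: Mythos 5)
Your proof is correct, and its overall architecture matches the paper's: both pass to the incidence variety of pairs $(K,p)$ with $p\in\p K\cap\g^\perp$ a non-transverse point, fiber it over the homogeneous space $\g^\perp$, and identify the fiber over $[\omega]$ with the Schubert locus of $8$-planes $K\ni\omega$ satisfying $\dim(K\cap\widehat{T}_\omega)\ge 3$ --- which is exactly the paper's cycle $\Sigma_{1,1}(\cV)\subset\gr_7(\bigwedge^2V/\ell_\omega)$ relative to a flag with $V_8=T_\omega\g^\perp$. Where you diverge is in the final mechanism: the paper concludes by noting that these Schubert fibers are irreducible of constant dimension over the irreducible base $\g^\perp$ and invoking the standard criterion that a (projective) surjection with irreducible equidimensional fibers over an irreducible base has irreducible source; you instead dominate the incidence variety by an explicit tower of two Grassmann bundles (first the choice of a $3$-plane $K'$ with $\omega\in K'\subseteq\widehat{T}_\omega$, then the choice of $K\supseteq K'$), which is in effect the standard resolution of the Schubert variety $\Sigma_{1,1}$, globalized over $\g^\perp$. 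Your route is more self-contained --- it needs only that Grassmann bundles over irreducible bases are irreducible and that images of irreducible varieties are irreducible, with no equidimensionality check and no appeal to the fibration theorem --- at the cost of a slightly longer construction; the paper's version is shorter and, because it records the fiber as a named Schubert cycle with citation, makes the fiber dimension immediately available (which the parallel Lemma for $\cD^\perp$ exploits to get the divisor statement). Your linear-algebra reduction (via $\widehat{T}_\omega=v_1\wedge V+v_2\wedge V$ of dimension $9$, forcing $\dim(K\cap\widehat{T}_\omega)\ge 2$ always, with failure of transversality exactly when the dimension is $\ge 3$) is accurate and matches the paper's fiber description.
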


\begin{proof}
Let $\Xi$ be the incidence variety of $\g^\perp$ and $\gr_8(\bigwedge^2V)$ and let $\Sigma\subset \Xi$ be the locus given by the pairs $(q,K)$ such that $\p K$ and $\g^\perp$ are not transverse at $q$. Note that $\Sigma$ inherits from $\Xi$ a map $\sigma_1$ to $\g^\perp$, which is surjective and a map $\sigma_2$ to $\gr_8(\bigwedge^2V)$, whose image is precisely $\cD$.

Let us compute the fibers of $\sigma_1$. If $q \in \g^\perp$ is the point representing a line $\ell_q \subset \bigwedge^2V$, then $\sigma_1^{-1}(q)$ is formed by those $K$ such that $\ell_q \subset K$ and  $\dim(K/\ell_q \cap T_q \g^\perp) \ge 2$, which can be identified with the Schubert cycle $\Sigma_{1,1}(\cV)\subset \gr_7(\bigwedge^2V/\ell_q)$, where $\cV$ is a complete flag
\[
0 \subset V_1 \subset \ldots \subset V_{14} = \textstyle{\bigwedge^2V/\ell_q}
\]
such that $V_8 = T_q\g^\perp$. Consequently, $\sigma_1$ has irreducible and equidimensional fibers, see \cite[Theorem 4.1]{eisenbud_3264_2016}. Since $\g^\perp$ is also irreducible, so are $\Sigma$ and $\cD$.
\end{proof}

Let us now define a second subvariety of $\gr_8(\bigwedge^2V)$:

\[
\cD^\perp = \left\{K\in \gr_8(\textstyle{\bigwedge^2V)}: \p K^\perp \ \text{and}\ \g \ \text{do not intersect transversely} \right\}
\]

\begin{lem} \label{lem_ired_d'}
$\cD^\perp$ is an irreducible divisor of $\gr_8(\bigwedge^2V)$.
\end{lem}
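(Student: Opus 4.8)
The plan is to run the same fibration argument as in the proof of Lemma \ref{lem_ired_d}, now with the roles of $\g^\perp$ and $K$ played by $\g$ and $K^\perp$, and then to upgrade the conclusion from irreducibility to codimension one by a dimension count together with a generic finiteness statement. First I would set up the incidence correspondence using the isomorphism $\gr_8(\bigwedge^2V)\cong\gr_7(\bigwedge^2V^\vee)$ given by $K\mapsto K^\perp$: let $\Xi^\perp$ be the incidence variety of $\g=\gr_2(V^\vee)$ and $\gr_7(\bigwedge^2V^\vee)$, and let $\Sigma^\perp\subset\Xi^\perp$ be the locus of pairs $(q,K)$ with $q\in\p K^\perp\cap\g$ at which $\p K^\perp$ and $\g$ are not transverse. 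As before, $\Sigma^\perp$ carries a projection $\sigma_1^\perp$ to $\g$ and a projection $\sigma_2^\perp$ to $\gr_7(\bigwedge^2V^\vee)\cong\gr_8(\bigwedge^2V)$ whose image is $\cD^\perp$.

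Next I would compute the fibers of $\sigma_1^\perp$. Fix $q\in\g$, corresponding to a line $\ell_q\subset\bigwedge^2V^\vee$. Since $\dim\p K^\perp=6$, $\dim\g=8$ and the ambient projective space is $\p^{14}$, transversality at $q$ means $T_q\g+T_q\p K^\perp$ is everything, equivalently $\dim(K^\perp/\ell_q\cap T_q\g)=0$; hence non-transversality at $q$ is the condition $\dim(K^\perp/\ell_q\cap T_q\g)\ge 1$. Passing to the quotient $\bigwedge^2V^\vee/\ell_q$, the fiber $(\sigma_1^\perp)^{-1}(q)$ is therefore the special Schubert variety $\Sigma_1(\cV')\subset\gr_6(\bigwedge^2V^\vee/\ell_q)$ attached to a flag $\cV'$ with $V_8'=T_q\g$. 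This is irreducible of codimension one, hence of dimension $47$, and the fibers are equidimensional. As in Lemma \ref{lem_ired_d}, irreducibility of the base $\g$ together with the irreducible equidimensional fibers of $\sigma_1^\perp$ force $\Sigma^\perp$, and therefore its image $\cD^\perp$, to be irreducible. The only difference with Lemma \ref{lem_ired_d} is that the fiber is now the codimension-one cycle $\Sigma_1$ rather than $\Sigma_{1,1}$, which reflects exactly the change of expected dimension of the linear section from $1$ to $0$.

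Finally I would establish the divisor claim. The computation above gives $\dim\Sigma^\perp=\dim\g+47=55$, while $\dim\gr_8(\bigwedge^2V)=8\cdot 7=56$; since $\cD^\perp=\sigma_2^\perp(\Sigma^\perp)$ this yields $\codim\cD^\perp\ge 1$. To see that the codimension is exactly one, I would show $\sigma_2^\perp$ is generically finite onto $\cD^\perp$. The cleanest way is to observe that over the open locus $U\subset\gr_7(\bigwedge^2V^\vee)$ where $\p K^\perp\cap\g$ is zero-dimensional, the assignment $K\mapsto\p K^\perp\cap\g$ is a finite flat family of length $\deg\g=14$, and $\cD^\perp\cap U$ is precisely its discriminant, i.e. the locus where the length-$14$ scheme is non-reduced. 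Since simple tangencies exist, $\cD^\perp\cap U$ is non-empty, hence dense in the irreducible $\cD^\perp$; and since $\g$ has non-empty dual variety the cover is not étale, so its discriminant is a non-empty divisor. Equivalently, the fiber of $\sigma_2^\perp$ over a general point of $\cD^\perp$ is the single tangency point, so $\sigma_2^\perp$ is generically finite and $\dim\cD^\perp=55$. Hence $\cD^\perp$ is an irreducible divisor.

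I expect the main obstacle to be precisely this last step: unlike the irreducibility, which is a near-verbatim transcription of Lemma \ref{lem_ired_d}, the codimension-one assertion genuinely requires the zero-dimensionality of the generic section (to realize $\cD^\perp$ as a discriminant) and the non-triviality of the dual variety of $\g$ (to guarantee the discriminant is non-empty and of pure codimension one).
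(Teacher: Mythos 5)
Your proposal is correct and follows essentially the same route as the paper: the same incidence correspondence $\Sigma^\perp$, the same identification of the fibers of $\sigma_1^\perp$ with the codimension-one Schubert cycle $\Sigma_1(\cV')\subset \gr_6(\bigwedge^2V^\vee/\ell_q)$, the same count $\dim\Sigma^\perp = 8+47 = 55 = \dim\gr_8(\bigwedge^2V)-1$, and generic finiteness of $\sigma_2^\perp$ to conclude. The only difference is that you justify the generic finiteness explicitly, by realizing $\cD^\perp$ over the locus of zero-dimensional sections as the discriminant of the finite flat length-$14$ family $K\mapsto \p K^\perp\cap\g$ --- a step the paper simply asserts --- which is a welcome strengthening rather than a departure.
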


\begin{proof}
Let $\Xi^\perp$ be the incidence variety of $\g$ and $\gr_7(\bigwedge^2V^\vee)$ and let $\Sigma^\perp\subset \Xi^\perp$ given by the pairs $(r, K^\perp)$ such that $\p K^\perp$ and $\g$ are not transverse at $r$. Let $\sigma_1^\perp$ and $\sigma_2^\perp$ be the maps to $\g$ and $\gr_7(\bigwedge^2V^\vee)$, respectively. Via the isomorphism $\gr_8(\bigwedge^2V) \xlongrightarrow{\sim} \gr_7(\bigwedge^2V^\vee)$, $K \longmapsto K^\perp$, it suffices to show that the image of $\sigma_2^\perp$ is an irreducible divisor.

Let us compute the fibers of $\sigma_1^\perp$. If $r \in \g$ in the point representing a line $\ell_{r} \subset \bigwedge^2V^\vee$, then the fiber of $r$ via $\sigma_1^\perp$ is formed by those $K^\perp$ such that $\ell_{r} \subset K^\perp$ and  $K^\perp/\ell_{r} \cap T_{r}\g \neq \{0\}$, which can be identified with the Schubert cycle $\Sigma_1(\cV')\subset \gr_6(\bigwedge^2V^\vee/\ell_{r})$, where $\cV'$ is a complete flag
\[
0 \subset V_1' \subset \ldots \subset V_{14}' = \textstyle{\bigwedge^2V^\vee/\ell_{r}}
\]
such that $V_8' = T_{r}\g$. Therefore, the fibers of $\sigma_1^\perp$ are irreducible divisors of $\gr_6(\mathbb{C}^{14})$, see \cite[Theorem 4.1]{eisenbud_3264_2016}. Thus, $\dim \Xi^\perp = 55$. Then note that $\sigma_2^\perp$ are has generically finite fibers, so $\cD^\perp$ is irreducible of dimension $55 = \dim \gr_8( \bigwedge^2V) - 1$.
\end{proof}

As mentioned earlier, Theorem \ref{thm_mukai} shows that $\operatorname{supp}(\cD^\perp)\subseteq \operatorname{supp}(\cD)$. Taking Lemmas \ref{lem_ired_d} and \ref{lem_ired_d'} into account, we deduce the reverse inclusion.

\begin{thm}
\label{thm:g(8,15)}
Let $K\in \gr_8( \bigwedge^2V)$. The intersection $\p K \cap \gr_2(V)$ is transversal if and only if the intersection $\p K^\perp \cap \gr_2( V^\vee)$ is transversal.
\end{thm}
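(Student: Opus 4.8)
The plan is to prove the equivalence by showing that the two ``bad loci'' coincide as subsets of $\gr_8(\bigwedge^2 V) = \gr_8(\mathbb{C}^{15})$, since the assertion to be proved is exactly $K \notin \cD \iff K \notin \cD^\perp$. One inclusion of supports, namely $\cD^\perp \subseteq \cD$, is already in hand: it is the reformulation of the direction supplied by Mukai's Theorem \ref{thm_mukai} (transversality of $\p K \cap \g^\perp$ forces transversality of $\p K^\perp \cap \g$). Thus the entire content of the theorem is the reverse inclusion $\cD \subseteq \cD^\perp$, and I would obtain it not by comparing tangent spaces directly, but by a dimension count combined with the irreducibility statements of Lemmas \ref{lem_ired_d} and \ref{lem_ired_d'}.

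Concretely, recall that $\dim \gr_8(\bigwedge^2 V) = 8 \cdot 7 = 56$. First I would note that $\cD$ is a \emph{proper} closed subvariety. It is closed, being the image of the closed incidence locus $\Sigma$ under the proper projection $\sigma_2$; and it is proper because transversal sections exist, e.g.\ a general linear section of $\g^\perp$ is the smooth canonical curve of Theorem \ref{thm_mukai}(a). Hence there is some $K \notin \cD$, and therefore $\dim \cD \le 55$. On the other side, Lemma \ref{lem_ired_d'} tells us that $\cD^\perp$ is an irreducible divisor, so $\dim \cD^\perp = 55$.

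I would then chain the estimates. From $\cD^\perp \subseteq \cD$ we get $55 = \dim \cD^\perp \le \dim \cD \le 55$, which forces $\dim \cD = 55$. Since $\cD$ is irreducible by Lemma \ref{lem_ired_d} and $\cD^\perp$ is an irreducible closed subset of $\cD$ of the same dimension, the two are equal as sets, because an irreducible closed subset of an irreducible variety of the same dimension is the whole variety. This yields $\cD = \cD^\perp$, hence $K \notin \cD \iff K \notin \cD^\perp$, which is precisely the stated equivalence. I would also record that both intersections are automatically nonempty in this range, since $\dim \p K + \dim \g^\perp = 7 + 8 \ge 14$ and $\dim \p K^\perp + \dim \g = 6 + 8 = 14$, so there is no vacuous ``empty, hence transversal'' case to treat separately.

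I expect the only genuine subtleties to be ones already discharged in the preceding lemmas: the computation that $\cD^\perp$ is a divisor of dimension $55$ (via the Schubert-cycle description of the fibers of $\sigma_1^\perp$ in Lemma \ref{lem_ired_d'}) and the irreducibility of both loci. Granting these, the theorem is essentially a one-line consequence of the numerology ``an irreducible variety cannot strictly contain an irreducible closed subset of its own dimension.'' The step most easily overlooked, and the one I would be careful to pin down, is the properness of $\cD$, i.e.\ the existence of at least one transversal $K$; this I would secure by invoking the generic smoothness in Mukai's Theorem \ref{thm_mukai}(a) rather than leaving it implicit.
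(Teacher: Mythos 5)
Your proposal is correct and follows essentially the same route as the paper: the inclusion $\cD^\perp \subseteq \cD$ from Mukai's theorem, combined with the irreducibility of $\cD$ (Lemma \ref{lem_ired_d}) and the fact that $\cD^\perp$ is an irreducible divisor (Lemma \ref{lem_ired_d'}), forces $\cD = \cD^\perp$ --- which is precisely the deduction the paper leaves implicit and you spell out. The only cosmetic difference is that you obtain $\dim \cD \le 55$ from the existence of one transversal section (for which Bertini, rather than Theorem \ref{thm_mukai}(a), is the natural reference, since Mukai's statement presupposes transversality), whereas the paper gets the same bound from the fiber dimension count $\dim \Sigma = 55$ in Lemma \ref{lem_ired_d}.
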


An immediate consequence is the following positive answer to %Questions \ref{ques:General} and 
Question \ref{ques:Generic} for $n = 6$.

\begin{cor}
Any resonance variety consisting of fourteen disjoint lines in $\p^5$ is the resonance of a Mukai bundle on a general curve of genus 8.
\end{cor}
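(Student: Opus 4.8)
The plan is to extract from the hypothesis a transversal codimension-eight linear section of $\g$ and then run the chain of results established above. First I would translate the set-theoretic hypothesis into scheme-theoretic data. A resonance variety equal to fourteen pairwise disjoint lines in $\p V^\vee=\p^5$ forces $\g\cap\p K^\perp$, with its reduced structure, to be exactly fourteen distinct points: via the incidence correspondence $\Xi_V$ each line of $\r(V,K)$ is swept out by a single point of $\g$, and the lines are disjoint precisely when the corresponding two-planes $\Lambda_1,\dots,\Lambda_{14}\subseteq V^\vee$ meet pairwise in $\{0\}$, so these points are distinct. Since $\dim\g=8$ inside $\p^{14}=\p(\bigwedge^2V^\vee)$ and $\deg\g=14$, the finiteness of the intersection together with the projective dimension theorem gives $\dim\p K^\perp\le 6$, while the requirement that $\r(V,K)$ consist of exactly fourteen lines forces equality; hence $\dim K=8$ and $\g\cap\p K^\perp$ is a proper zero-dimensional linear section, of length $\deg\g=14$. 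As this scheme is reduced, every intersection multiplicity equals $1$, so $\p K^\perp\cap\g$ is transversal.

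Next I would transport transversality across the duality and identify the bundle. By Theorem \ref{thm:g(8,15)}, transversality of $\p K^\perp\cap\g$ is equivalent to transversality of $X:=\p K\cap\g^\perp$; since $\dim K=8$ gives $\p K=\p^7$ and $\g^\perp=\gr_2(V)$ is eight-dimensional in $\p^{14}$, the transversal section $X$ is a smooth curve of dimension $7+8-14=1$, in particular non-empty and positive-dimensional. Proposition \ref{prop:E=E_M} then shows that $X$ is a canonical curve of genus $8$ carrying no $g^2_7$, and that the restricted universal quotient bundle $E=\cQ|_X$ coincides with the Mukai bundle $E_M$ of Theorem \ref{thm_mukai}.

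To conclude, Corollary \ref{cor_E} applies to the transversal and positive-dimensional section $X$ and yields $\r(V,K)=\r(E)=\r(E_M)$, so the prescribed fourteen disjoint lines are exactly the resonance of the Mukai bundle $E_M$. For the genericity of the curve I would invoke the remark following Theorem \ref{thm_mukai}: a transversal one-dimensional section of $\g^\perp$ is Brill-Noether-Petri general, hence a general genus-$8$ curve, which finishes the positive answer to Question \ref{ques:Generic}.

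The step I expect to demand the most care is the first one --- upgrading the bare hypothesis ``fourteen disjoint lines'' to the scheme-theoretic assertion that $\p K^\perp\cap\g$ is a transversal, length-fourteen section with $\dim K=8$. The delicate point is to exclude that the fourteen points be cut out by a linear space of projective dimension smaller than $6$ (equivalently $\dim K>8$), which would place us outside the hypotheses of Theorem \ref{thm:g(8,15)}. Here the numerical coincidence $14=\deg\g=\frac{8!}{4!\,5!}$ --- the last expression being the Castelnuovo number counting the $g^1_5$'s on a general genus-$8$ curve --- is precisely what makes the section proper and transversal; once this normalization is in place, the remainder of the argument is a direct assembly of Theorem \ref{thm:g(8,15)}, Proposition \ref{prop:E=E_M} and Corollary \ref{cor_E}.
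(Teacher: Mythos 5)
Your overall route is exactly the paper's intended one: the corollary is stated there as an immediate consequence of Theorem \ref{thm:g(8,15)}, combined with Proposition \ref{prop:E=E_M}, Corollary \ref{cor_E} and the remark after Theorem \ref{thm_mukai} on Brill-Noether-Petri generality, and you assemble precisely these ingredients in the right order. However, there is one genuine gap, and it is the very step you flag at the end: your claim that ``the requirement that $\r(V,K)$ consist of exactly fourteen lines forces $\dim \p K^\perp = 6$'' is not proved by anything you say. The bound $\#(\g \cap \p L) \le \deg \g = 14$ holds for a finite linear section $\p L$ of \emph{any} dimension (extend $\p L$ to a generic $\p^6$ and apply B\'ezout with multiplicities to the proper intersection), so the count ``exactly fourteen'' does not by itself exclude that the fourteen reduced points are cut out by a $\p^5$ or smaller, i.e.\ that $\dim K > 8$ --- in which case Theorem \ref{thm:g(8,15)} and Proposition \ref{prop:E=E_M}, both of which require $K \in \gr_8(\bigwedge^2 V)$, would not apply. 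Invoking the Castelnuovo number $14 = \frac{8!}{4!\,5!}$ is a heuristic for why $14$ is the relevant count, not an argument for the dimension normalization. A related slip: you write ``as this scheme is reduced, every intersection multiplicity equals $1$,'' but reducedness of the scheme-theoretic intersection is a conclusion, not a hypothesis; the set-theoretic assumption only controls the support. The correct derivation (whose ingredients you do state) is: once the intersection is proper, the total multiplicity is $\deg \g = 14$, and fourteen distinct points each of multiplicity $\ge 1$ force all multiplicities equal to $1$, hence transversality.

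The gap is repairable without proving that the fourteen points span a $\p^6$. If $\dim \p K^\perp < 6$, replace $K^\perp$ by a generic enlargement $\widetilde{K}^\perp$ of dimension $7$ containing it. A generic such enlargement meets $\g$ in a finite scheme: the incidence variety of pairs $(x, M)$ with $x \in \g \setminus (\g \cap \p K^\perp)$ and $M$ a $\p^6$ containing $\p K^\perp$ and $x$ has dimension $8$ (the fiber over $x$ is the single space spanned by $\p K^\perp$ and $x$), so it cannot dominate the $\p^8$ of extensions with positive-dimensional fibers. For such a generic $M = \p \widetilde{K}^\perp$ the intersection $\g \cap M$ is proper, has total multiplicity $14$, and contains the fourteen reduced points, hence equals them scheme-theoretically and is transversal. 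Since the resonance depends only on the set $\g \cap \p K^\perp$, which is unchanged, you may run your argument verbatim with $\widetilde{K} \subset K$ of dimension $8$: Theorem \ref{thm:g(8,15)} transfers transversality, the section $X = \p \widetilde{K} \cap \g^\perp$ is nonempty of dimension $1$ (as you correctly note, $7 + 8 > 14$), Proposition \ref{prop:E=E_M} identifies $\cQ|_X$ with the Mukai bundle $E_M$, and Corollary \ref{cor_E} gives $\r(V,K) = \r(V,\widetilde{K}) = \r(E_M)$. With this normalization inserted, your proof is complete and coincides with the paper's.
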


\printbibliography

\end{document}